\documentclass{amsart}

\usepackage{graphicx}

\usepackage{tikz}
\usetikzlibrary{arrows}
\usetikzlibrary{patterns}

\allowdisplaybreaks

\theoremstyle{plain}
\newtheorem{theorem}{Theorem}[section]
\newtheorem{proposition}[theorem]{Proposition}

\newtheorem{problem}[theorem]{Problem}

\theoremstyle{definition}
\newtheorem*{definition}{Definition}

\theoremstyle{remark}
\newtheorem{remark}[theorem]{Remark}

\begin{document}

\title[Surface bundles over surfaces]
{Surface bundles over surfaces with a fixed signature}

\author[Ju A Lee]{Ju A Lee}
\address{Ju A Lee \\ Department of Mathematial Sciences \\ Seoul National University \\ Seoul 151-747, Korea}
\email{jualee@snu.ac.kr}

\subjclass{Primary 57R22, 57R55, 20F12, 57M07}
\keywords{surface bundle, mapping class group, signature, Lefschetz fibration}

\begin{abstract}
The signature of a surface bundle over a surface is known to be divisible by $4$. It is also known that the signature vanishes if the fiber genus $\leq 2$ or the base genus $\leq 1$. In this article, we construct new smooth $4$-manifolds with signature $4$ which are surface bundles over surfaces with small fiber and base genera. From these we derive improved upper bounds for the minimal genus of surfaces representing the second homology classes of a mapping class group.
\end{abstract}

\maketitle

\section{Introduction}

By a surface bundle over a surface we mean an oriented fiber bundle whose fibers and base are both compact, oriented $2$-manifolds. When we study the topology of fiber bundles, the fundamental question is how the topological invariants of the total space, the fiber space, and the base space are related. Even though it is an elementary fact that the Euler characteristic is multiplicative for fiber bundles, for the signature, the same does not hold in general. As the first counterexamples, Atiyah\cite{Atiyah:69} and, independently, Kodaira\cite{Kod:1967}  
provided surface bundles over surfaces with nonvanishing signature. 
In these classical examples, the fiber genus $f$ or the base genus $b$ was fairly big. For example, in Atiyah's example, $f=6$ and $b=129$.
After that, there have been many efforts to find out the smallest possible genera of surface bundle over surface for which the siganture is nonzero.\cite{Endo:1998, BDS:2001, BD:2002, St:2002, EKKOS:2002} 

In the early constructions of surface bundles, the signature of the total space was computed by using the signature formula for ramified coverings created by Hirzebruch\cite{Hi:1969}. However, not all of the bundles can be constructed by using the branched covering method. Instead, in general, the monodromy information of a surface bundle allows us to compute its signature, with the help of Meyer's signature cocycle\cite{Mey:1973} which is a $2$-cocycle of the symplectic group $Sp(2g,\mathbb{R})$. Using the signature cocycle and Birman-Hilden's relations of mapping class group, Meyer proved that if the fiber genus $f\leq 2$ or the base genus $b\leq 1$, then the signature vanishes. Hence, for a nonzero signature, we only need to consider the case when $f\geq3$ and $b\geq2$. He also proved that for every $f\geq 3$ and every $4n\in 4\mathbb{Z}$, there exists a $\Sigma_f$ bundle over $\Sigma_b$ with signature $4n$ for some $b\geq 0$. Based on his result, Endo\cite{Endo:1998} studied the following refined question which is very similiar to Problem $2.18 A$ in Kirby's problem list \cite{Kirby}.

\begin{problem}
For each $f\geq 3$ and each $n \in \mathbb{Z}$, let $b(f,n)$ be the minimal base genus $b$ over which a surface bundle with fiber genus $f$ and signature $4n$ exists. Determine the value $b(f,n)$.
\end{problem}

In \cite{Endo:1998}, Endo showed that $b(f,n)\leq 111|n|$ for any $f\geq 3$.
In \cite{St:2002}, Stipsicz showed that $b(f,2f+2)\leq 4f+20$.
In \cite{EKKOS:2002}, Endo, Kotschick, 
Korkmaz, Ozbagci, and Stipsicz proved that $b(f,n)\leq 8|n|+1$ for any $f\geq 3$ and any $n\neq 0$. In this paper, we improve this upper bound for $b(f,n)$.

\begin{theorem}\label{thm:main}

(a) For every $f\geq3$ and $n\neq 0$, $b(f,n)\leq 7|n|+1$. In particular, there exists a smooth $4$-manifold with signature $4$ which is a $\Sigma_3$-bundle over $\Sigma_8$. \\
(b) For every $f\geq5$ and $n\neq 0$, $b(f,n)\leq 6|n|+1$. In particular, there exists a smooth $4$-manifold with signature $4$ which is a $\Sigma_5$-bundle over $\Sigma_7$. \\
(c) For every $f\geq6$ and $n\neq 0$, $b(f,n)\leq 5|n|+1$. In particular, there exists a smooth $4$-manifold with signature $4$ which is a $\Sigma_6$-bundle over $\Sigma_6$.
\end{theorem}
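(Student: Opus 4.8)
The plan is to translate the existence of each surface bundle into a statement about the mapping class group and then to pin down its signature with Meyer's cocycle. A $\Sigma_f$-bundle over $\Sigma_b$ is classified by its monodromy, a homomorphism $\rho\colon\pi_1(\Sigma_b)\to\mathrm{Mod}(\Sigma_f)$; using the standard presentation of $\pi_1(\Sigma_b)$ this is the same as a tuple $(\phi_1,\psi_1,\dots,\phi_b,\psi_b)$ of mapping classes satisfying $\prod_{i=1}^{b}[\phi_i,\psi_i]=1$. Composing $\rho$ with the symplectic representation $\mathrm{Mod}(\Sigma_f)\to Sp(2f,\mathbb{Z})$ and pairing the pullback of Meyer's cocycle $\tau_f$ with the fundamental class $[\Sigma_b]$ computes the signature of the total space; concretely $\sigma$ equals, up to sign, a sum of values of $\tau_f$ determined by the symplectic images of the $\phi_i,\psi_i$. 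Thus the whole problem reduces to exhibiting, for each prescribed fiber genus, an explicit product-of-commutators relation in $\mathrm{Mod}(\Sigma_f)$ using as few commutators as possible and with Meyer-cocycle sum equal to $4$ (or $4n$).

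First I would construct the three signature-$4$ building blocks. For fiber genus $3,5,6$ I want a factorization $\prod_{i=1}^{c}[\phi_i,\psi_i]=\theta$ in $\mathrm{Mod}(\Sigma_f)$, where $\theta$ is a fixed mapping class and $c=7,6,5$ respectively, whose associated Meyer-cocycle contribution is $4$; closing this up by writing $\theta^{-1}$ as a single commutator $[\alpha,\beta]$ then yields the relation $\prod_{i=1}^{c}[\phi_i,\psi_i]\,[\alpha,\beta]=1$, i.e. a genuine $\Sigma_f$-bundle over $\Sigma_{c+1}$. To find such relations efficiently I would start from a Lefschetz-fibration model: take a word in Dehn twists about explicit curves that represents $\theta$, arrange all vanishing cycles to be nonseparating and to occur so that the word can be regrouped into commutators (for example via the chain, star, or lantern relations), and read off the signature from the local signature of each twist. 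Getting the coefficient $c$ down to $7,6,5$ is exactly the point where the extra room in a genus-$5$ or genus-$6$ fiber should let me use shorter, more symmetric relations than in genus $3$.

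Next I would scale to arbitrary signature $4n$. Concatenating $n$ independent copies of the block gives $\prod_{k=1}^{n}\prod_{i=1}^{c}[\phi_i^{(k)},\psi_i^{(k)}]=\theta^{n}$, and since Meyer's cocycle evaluation is additive under this concatenation the signature adds to $4n$ (and to $4(-n)$ after reversing orientation). The saving that produces the bound $c|n|+1$ rather than $(c+1)|n|$ comes from closing up only once: I would write $\theta^{-n}$ as a single commutator $[\alpha,\beta]$ (using perfectness of $\mathrm{Mod}(\Sigma_f)$ for $f\ge 3$ together with a commutator-length estimate), so that the total relation is a product of $cn+1$ commutators and defines a $\Sigma_f$-bundle over $\Sigma_{cn+1}$. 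Finally, to pass from the minimal fiber genus to all $f\ge 3$ (resp. $f\ge5$, $f\ge6$) I would use the inclusion $\mathrm{Mod}(\Sigma_{f_0})\hookrightarrow\mathrm{Mod}(\Sigma_f)$ that extends a mapping class by the identity over the extra handles: this keeps the symplectic action on the original $2f_0$ homology classes unchanged and trivial on the new ones, so the Meyer-cocycle sum, and hence the signature, is preserved while the base genus does not grow.

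I expect the genuine obstacle to be the construction of the building blocks in the second step: producing a concrete commutator factorization of the shortest possible length whose Meyer-cocycle total is exactly $4$, and then verifying that value rigorously rather than only up to the ambiguities inherent in the cocycle. Two further subtleties need care — checking that the closing element $\theta^{-n}$ really is a single commutator, so that no extra handle is wasted and the bound is sharp at $+1$, and checking that the concatenation of blocks genuinely remains a product of commutators that evaluates additively under $\tau_f$. But the combinatorics of the explicit relation, together with the exact signature bookkeeping, are where the real work lies.
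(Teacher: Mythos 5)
Your overall framework — classifying bundles by commutator relations in $Mod(\Sigma_f)$, computing the signature by pairing the pulled-back Meyer cocycle with $[\Sigma_b]$, and manufacturing short relations out of lantern/star/chain-type identities — is exactly the paper's, and your $n=1$ building blocks are plausible in outline (the paper realizes them as Lefschetz fibrations over $\Sigma_3$, $\Sigma_4$, $\Sigma_3$ and ``closes up'' by subtracting matching Lefschetz fibrations; note that its closing for part (a) costs $5$ commutators, via $t_a^2=$ two commutators and $t_a^4=$ three commutators, not the single commutator your accounting assumes). The fatal gap is your scaling step. You concatenate $n$ copies of a block $\prod_{i=1}^{c}[\phi_i,\psi_i]=\theta$ and then close up by writing $\theta^{-n}$ as a \emph{single} commutator, citing perfectness of $Mod(\Sigma_f)$ ``together with a commutator-length estimate.'' Perfectness gives only finite commutator length, and the estimate you need goes the wrong way: in your own setup $\theta^{\mp1}$ is a product of right-handed Dehn twists about nonseparating curves (that is what the Lefschetz-fibration model and the signature bookkeeping force; in the paper's genus-$3$ block $\theta^{-1}=t_{\alpha_1}^{4}t_{\alpha_2}^{2}$), and for such positive multitwists the commutator length of $\theta^{-n}$ grows \emph{linearly} in $n$. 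This is precisely the positivity of stable commutator length of right-handed Dehn twist products established by the Meyer-cocycle methods of \cite{EKKOS:2002} (cf. \cite{K:2004}): if $\theta^{-n}$ were a single commutator for all $n$, one would obtain genus-$f$ Lefschetz fibrations over $T^2$ with $6n$ nonseparating singular fibers whose signature stays bounded (Meyer's formula for the bundle over $\Sigma_1^2$ has a bounded number of terms), whereas each right-handed nonseparating twist shifts the value of the cobounding cochain $c$ of Theorem 4.3 by a fixed nonzero amount, forcing the signature to grow linearly. So for large $n$ the closing element simply is not a commutator, and your relation over $\Sigma_{cn+1}$ cannot be produced this way.

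The paper scales by a different and much softer mechanism: having built one bundle $E\to\Sigma_{c+1}$ with $\sigma(E)=4$, it pulls $E$ back along a connected unramified covering $\Sigma_{c|n|+1}\to\Sigma_{c+1}$ of degree $|n|$ (reversing orientation when $n<0$). Since $\sigma(E)=\langle\psi^*[\tau_f],\chi_*[\Sigma_b]\rangle$ and the image of the fundamental class multiplies by the degree, the pullback has signature $4n$, giving $b(f,n)\le c|n|+1$ with no commutator-length input at all; this is the step you should substitute for your concatenation argument. Two smaller issues in your write-up would also need repair. First, ``Meyer sums add under concatenation'' is not automatic: Meyer's formula contains cross terms $\tau_g(\kappa_1\cdots\kappa_{i-1},\kappa_i)$, and the paper controls them by cutting the total space into fibration pieces glued along full boundary components (Novikov additivity), respectively by the fact that $c$ restricts to a homomorphism on relators. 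Second, there is no homomorphism $Mod(\Sigma_{f_0})\to Mod(\Sigma_f)$ for closed surfaces given by ``extending by the identity''; one must pass through $Mod(\Sigma_{f_0}^1)$, and a relation in the closed group need not lift across the capping. The paper avoids this by drawing all curves and verifying all topological equivalences (connectedness of complements) directly on $\Sigma_f$ for every admissible $f$, which is why its propositions are stated for all $f\ge 3,5,6$ rather than deduced from a minimal-genus case.
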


Our constructions of surface bundles rely on various computations in mapping class groups, which we will introduce in Section $3$. From a geometric point of view, these computations correspond to monodromy factorizations of Lefschetz fibrations. From Lefschetz fibrations, by taking neighborhoods of singular fibers out and gluing them along isomorphic boundaries via fiber-preserving diffeomorphisms, we can construct surface bundles over surfaces. This method was introduced in \cite{EKKOS:2002} to construct a $\Sigma_3$ bundle over $\Sigma_9$ with signature $4$. A key ingredient in this paper is that a clever use of different embeddings of relations in mapping class groups gives rise to more economical, in the sense of small genera, surface bundles with a fixed signature $4$. 

\begin{remark}\cite{Kot:1998}
We may think of $b(f,n)$ as the minimal genus of the surfaces representing the $n$ times generator of $H_2(Mod(\Sigma_f):\mathbb{Z})/Tor$ for fixed $f\geq3$ and $n$.
\end{remark}

On the other hand, the lower bound for $b(f,n)$ was also investigated. Kotschick \cite{Kot:1998} proved $b(f,n)\geq \frac{2|n|}{f-1}+1$, and Hamenstadt\cite{Hamen:2012} proved $b(f,n)\geq \frac{3|n|}{f-1}+1$. Combining the latter with our result, we have $3\leq b(3,1)\leq8$, $2\leq b(5,1)\leq7$, and $2\leq b(6,1)\leq6$.

It is not hard to see that $\frac{b(f,n)}{n}$ converges. Now we define $G_f:=\lim_{n\to\infty} \frac{b(f,n)}{n}$ and improve a priori the upper bound for $G_f$ that appeared in \cite{EKKOS:2002}.

\begin{theorem}\label{thm:cor}
For every odd $f\geq3$, $G_f\leq\frac{14}{f-1}$.
\end{theorem}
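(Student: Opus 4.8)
The plan is to combine two covering operations: unramified covers of the base, which realize the limit defining $G_f$, and fiberwise unramified covers, which raise the fiber genus while keeping the signature multiplicative.

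First I would record the following consequence of base covers. Suppose there exists a $\Sigma_f$-bundle over $\Sigma_{b_0}$ with signature $4n_0$. For every $d\geq 1$ there is a connected $d$-fold unramified cover $\Sigma_{d(b_0-1)+1}\to\Sigma_{b_0}$, obtained from a surjection $\pi_1(\Sigma_{b_0})\to\mathbb{Z}/d$, and the pullback bundle over $\Sigma_{d(b_0-1)+1}$ has signature $4dn_0$, since its total space is a $d$-fold unramified cover of the original total space and the signature is multiplicative under finite covers. Hence $b(f,dn_0)\leq d(b_0-1)+1$, and dividing by $dn_0$ and letting $d\to\infty$ gives $G_f\leq \frac{b_0-1}{n_0}$; here I use the stated fact that $\frac{b(f,n)}{n}$ converges, so its limit agrees with the limit along the subsequence $n=dn_0$.

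Next I would produce, for each odd $f$, an economical input bundle to feed into this estimate. Starting from the $\Sigma_3$-bundle over $\Sigma_8$ with signature $4$ supplied by Theorem \ref{thm:main}(a), I set $m=\frac{f-1}{2}$ and fix a degree-$m$ unramified cover $p\colon\Sigma_f\to\Sigma_3$, which exists for every $m\geq 1$ because Riemann--Hurwitz forces $f-1=m(3-1)$. Replacing the $\Sigma_3$ fiber by its cover $\Sigma_f$ fiberwise should yield a $\Sigma_f$-bundle over $\Sigma_8$ whose total space is an $m$-fold unramified cover of the original total space, so its signature equals $4m=2(f-1)$, again by multiplicativity of signature. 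Feeding $b_0=8$ and $n_0=m=\frac{f-1}{2}$ into the first step then yields $G_f\leq \frac{7}{(f-1)/2}=\frac{14}{f-1}$, as claimed. The restriction to odd $f$ is exactly what makes $f-1$ an even multiple of $3-1$, so that the fiber cover has integer degree.

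The main obstacle is the fiberwise cover in the second step: replacing the fiber by $p\colon\Sigma_f\to\Sigma_3$ requires the monodromy representation $\rho\colon\pi_1(\Sigma_8)\to \mathrm{Mod}(\Sigma_3)$ to lift to $\mathrm{Mod}(\Sigma_f)$, which need not hold for an arbitrary cover $p$. I would handle this by passing to a finite cover of the base. The mapping classes preserving the conjugacy class of $p_*\pi_1(\Sigma_f)\leq\pi_1(\Sigma_3)$ form a finite-index subgroup of $\mathrm{Mod}(\Sigma_3)$, so after pulling $\rho$ back to a finite cover $\Sigma_{b'}\to\Sigma_8$ its image lies in this stabilizer and each monodromy element lifts; a further finite base cover kills the remaining obstruction (a class in $H^2(\pi_1(\Sigma_{b'});\mathrm{Deck}(p))$) to assembling the lifts into a homomorphism into $\mathrm{Mod}(\Sigma_f)$. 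Crucially, every such base cover multiplies both $b-1$ and the signature by the same degree, hence leaves the ratio $\frac{b-1}{n}$ unchanged and does not affect the limit $G_f$, so the estimate $G_f\leq\frac{14}{f-1}$ survives.
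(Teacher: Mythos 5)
Your proposal is correct and follows essentially the same route as the paper: start from the $\Sigma_3$-bundle over $\Sigma_8$ with signature $4$ from Theorem \ref{thm:main}(a), take a fiberwise covering of degree $\frac{f-1}{2}$ (available after passing to a finite cover of the base), and use base covers together with multiplicativity of the signature under finite unramified covers to bound the limit $G_f$ by $\frac{14}{f-1}$. The only difference is that where the paper simply invokes Morita \cite{Morita:1987} for the existence of the fiberwise covering after a base pullback, you sketch a proof of that lemma (restricting the monodromy to the finite-index stabilizer of the conjugacy class of $p_*\pi_1(\Sigma_f)$ and killing the remaining $H^2$-obstruction by a further base cover), which is a valid outline of the same fact.
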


\begin{remark}
As far as I know, this is the best known upper bound for $f=3$ or every odd $f$ of the form $3k+1,3k+2$. In fact, for some other $f$'s, better upper bounds are known : for even $f\geq 4$, $G_f\leq\frac{6}{f-2}$\cite{BD:2002}, and for $f=3k\geq6$, $G_{f}\leq\frac{9}{f-2}$\cite{BDS:2001}. 
\end{remark}

\begin{section}{Preliminaries}

\begin{subsection}{Signature}

Let $M$ be a compact oriented topological manifold of dimension $4m$.
Since $M$ is oriented, it admits the fundamental class $[M]\in H_{4m}(M,\partial M)$.
\begin{definition}
The symmetric bilinear form $Q_M:H^{2m}(M,\partial M)\times H^{2m}(M,\partial M)$
$\rightarrow \mathbb{Z}$ defined by $Q_M(a,b):=<a\cup b,[M]>$ is called the intersection form of $M$.
\end{definition}

\begin{remark}
In the smooth case, we can understand $Q_M$ above as the algebraic intersection number of smoothly embedded oriented submanifolds in $M$ representing the Poincar\'e duals of $a$ and $b$.  
\end{remark}

If $a$ or $b$ is a torsion element, then $Q_M$ vanishes, and hence $Q_M$ descends to the cohomology modulo torsion.
\begin{definition}
The signature of $M$, denoted by $\sigma(M)$, is defined to be the signature of the symmetric bilinear form $Q_M$ on $H^{2m}(M,\partial M)/Tor$. If the dimension of $M$ is not divisible by $4$, $\sigma(M)$ is defined to be zero.
\end{definition}   

\end{subsection}

\begin{subsection}{Mapping class group}

Let $\Sigma_g^{r}$ be an oriented surface of genus $g$ with $r$ boundary components and let $\Sigma_g$ be a closed oriented surface of genus $g$. The mapping class group $Mod(\Sigma_g^{r})$ of $\Sigma_g^{r}$ is defined to be the group of isotopy classes of orientation preserving self-homeomorphisms which are identity on each boundary component. Based on the theorem of Dehn, we have a surjective homomorphism $\pi:F(S)\rightarrow Mod(\Sigma_g)$, where $F(S)$ is the free group generated by the generating set $S$ consisting of all the Dehn twists over all isotopy classes of simple closed curves on $\Sigma_g$. Set $R:=Ker\pi$ and call each
word $w$ in the generators $S$ of $Mod(\Sigma_g)$ a relation of $Mod(\Sigma_g)$ if $w\in R$. Now, let us review some famous relations of mapping class groups.
 
Let $a$ and $b$ be two simple closed curves on $\Sigma_g$. If $a$ and $b$ are disjoint, then the supports of the Dehn twists $t_a$ and $t_b$ can be chosen to be disjoint. Hence, there exist commutativity relations $t_a t_b t_a^{-1} t_b^{-1}$ for any disjoint simple closed curves $a$ and $b$. If $a$ intersects $b$ transversely at one point, then there exists a braid relation $t_at_bt_at_b^{-1}t_a^{-1}t_b^{-1}$. It can be derived from more general fact that $ft_af^{-1}=t_{f(a)}$ in $Mod(\Sigma_g)$ for any simple closed curve $a$ on $\Sigma_g$ and any orientation preserving homeomorphism $f$ of $\Sigma_g$. For braid relations, we will take the latter general form $ft_af^{-1}t_{f(a)}^{-1}$. Consider the planar surface $\Sigma_0^{4}$ with boundary components $a,b,c$, and $d$. On the left hand side of Figure $1$, the boundary curves $a,b,c$, and $d$ are in black and the interior curves $x,y,$ and $z$ are in different colors. One can easily check that $t_at_bt_ct_d=t_zt_yt_x$ holds in $Mod(\Sigma_0^{4})$ by applying the Alexander method, and we call $t_d^{-1}t_c^{-1}t_b^{-1}t_a^{-1}t_zt_yt_x$ the lantern relations for 
all embedded subsurfaces $\Sigma_0^{4}\hookrightarrow \Sigma_g$.
For the $k$-chain relations and any other details for mapping class groups, refer to \cite{FM:2012}. One can also deduce the star relations $t_{\delta_3}^{-1}t_{\delta_2}^{-1}t_{\delta_1}^{-1}(t_{\alpha_1}t_{\alpha_2}t_{\alpha_3}t_{\beta})^3$ supported on any embedded subsurfaces $\Sigma_1^{3}\hookrightarrow \Sigma_g$. See Figure $4$ as an example.

We say that two simple closed curves $a$ and $b$ on $\Sigma_g$ are topologically equivalent if there exists a homeomorphism of $\Sigma_g$ sending $a$ to $b$. Similarly, the two collections $\{a_1,\cdots,a_n\}$ and $\{b_1,\cdots,b_n\}$ of simple closed curves on $\Sigma_g$ are called topologically equivalent if there exists a homeomorphism of $\Sigma_g$ sending $a_i$ to $b_i$ simultaneously for all $1\leq i\leq n$. To simplify the notation in the rest of this paper, we will use the notation $w_1^{w_2}$ for the conjugation $w_2^{-1}w_1w_2$.\\

\end{subsection}

\begin{subsection}{Lefschetz fibrations and surface bundles}
\begin{definition}
Let $X$ be a compact oriented $4$-manifold, and $B$ a compact oriented $2$-manifold. A smooth surjective map $f:X\rightarrow B$ is called a Lefschetz fibration if for each critical point $p\in X$ there are local complex coordinates $(z_1,z_2)$ on $X$ around $p$ and $z$ on $B$ around $f(p)$ compatible with the orientations and such that $f(z_1,z_2)=z_1^2+z_2^2$.
\end{definition}
It follows that $f$ has only finitely many critical points, and we may assume that $f$ is injective on the critical set $C=\{p_1,\cdots,p_k\}$. A fiber of $f$ containing a critical point is called a singular fiber, and the genus of $f$ is defined to be the genus of the regular fiber. Notice that if $\nu(f(C))$ denotes an open tubular neighborhood of the set of critical values $f(C)$, then the restriction of $f$ to $f^{-1}(B-\nu(f(C)))$ is a smooth surface bundle over $B-\nu(f(C))$.

For a smooth surface bundle $f:E\rightarrow B$ with a fixed identification $\phi$ of the fiber over the base point $p$ of $B$ with a standard genus $g$ surface $\Sigma_g$, the monodromy representation of $f$ is defined to be an antihomomorphism $\chi:\pi_1(B,p)\rightarrow Mod(\Sigma_g)$ defined as follows. For each loop $l:[0,1]\rightarrow B$, $l^*(E)\rightarrow [0,1]$ is trivial and hence there exists a parametrization $\Phi:[0,1]\times\Sigma_g\rightarrow f^{-1}(l[0,1]))$ with $\Phi|_{0\times\Sigma_g}=\phi^{-1}$. Now define $\chi([l]):=[\Phi|_{0\times\Sigma_g}^{-1}\circ\Phi|_{1\times\Sigma_g}]$.
For the genus $g$ Lefschetz fibration $f:X\rightarrow B$ with a fixed identification of the fiber with $\Sigma_g$, we define the monodromy representation of $f$ to be the monodromy representation of the surface bundle $f:X-f^{-1}(f(C))\rightarrow B-f(C)$. 

A Lefschetz singular fiber can be described by its monodromy. By looking at the local model of the Lefschetz critical point, one can see that the singular fiber is obtained from the regular fiber by collapsing a simple closed curve, called the vanishing cycle. One can also observe that the monodromy along the loop going around one Lefschetz critical value is given by the right-handed Dehn twist along the vanishing cycle. Hence, from the monodromy representation $\chi$ of a Lefschetz fibration, after fixing the generating system $\{a_1,b_1,\cdots,a_h,b_h,l_1,\cdots,l_k\}$ of $\pi_1(B-f(C),p)$, we get the global monodromy $\prod_{i=1}^{h}[\chi(a_i),\chi(b_i)]\prod_{j=1}^{k}t_{\gamma_j}$ since we have $\chi(l_j)=t_{\gamma_j}$ for each $j=1,\cdots,k$; and when $B$ is closed,  $\prod_{i=1}^{h}[\chi(a_i),\chi(b_i)]\prod_{j=1}^{k}t_{\gamma_j}$ $=1$ in $Mod(\Sigma_g)$, and this is called the monodromy factorization of a Lefschetz fibration. Conversely, a factorization $\prod_{i=1}^{h}[\alpha_i,\beta_i]\prod_{j=1}^{k}t_{\gamma_j}=1$ of identity in $Mod(\Sigma_g)$ gives rise to a genus $g$ Lefschetz fibration over $\Sigma_h$. For this, first observe that a product $\prod_{i=1}^{h}[\alpha_i,\beta_i]$ of $h$ commutators in $Mod(\Sigma_g)$ gives a $\Sigma_g$ bundle over $\Sigma_h^{1}$. Also, a product $\prod_{j=1}^{k}t_j$ of right-handed Dehn twists $t_j$ in $Mod(\Sigma_g)$ gives a genus $g$ Lefshetz fibration over $D^2$. By combining these two constructions, a word $w=\prod_{i=1}^{h}[\alpha_i,\beta_i]\prod_{j=1}^{k}t_j\in Mod(\Sigma_g)$ gives the genus $g$ Lefschetz fibration over $\Sigma_h^{1}$, and if $w=1$ in $Mod(\Sigma_g)$ we can close up to a Lefschetz fibration over $\Sigma_h$.

Two Lefschetz fibrations $f_1:X_1\rightarrow B_1, f_2:X_2\rightarrow B_2$ are called isomorphic if there exist orientation preserving diffeomorphisms $H:X_1\rightarrow X_2$ and $h:B_1\rightarrow B_2$ such that $f_2\circ H=h\circ f_1$. The isomorphism class of a Lefschetz fibration is determined by an equivalence class of its monodromy representation. Oriented genus $g$ surface bundles over surfaces of genus $h$ are classified, up to isomorphism, by homotopy classes of the classifying map $\Sigma_h\rightarrow BDiff^+\Sigma_g$ since the structure group is $Diff^+\Sigma_g$. If $g\geq2$, then according to the Earle-Eells theorem and the $K(\pi,1)$ theory, they are classified by the conjugacy classes of the induced homomorphisms $\pi_1(\Sigma_h)\rightarrow Mod(\Sigma_g)$. Therefore, $\prod_{i=1}^{h}[\alpha_i,\beta_i]=1$ in $Mod(\Sigma_g)$ , up to global conjugations, determines the genus $g$ surface bundle over a surface of genus $h$.
\end{subsection}
\end{section}

%
%

\begin{section}{Subtraction of Lefschetz fibrations}

In the study of manifold theory, a common way to construct a new manifold from a given manifold is a cut-and-paste operation. 
To construct a new $4$-manifold which is a surface bundle over a surface, 
H.~Endo, M.~Korkmaz, D.~Kotschick, B.~Ozbagci and A.~Stipsicz 
introduced an operation, called the "subtraction of Lefschetz fibrations", in \cite{EKKOS:2002}. Let us first explain it here in a generalized version.

Let $f:X\rightarrow B_1$ be a Lefschetz fibration with $m$ critical values $q_1^{(1)},\cdots,q_m^{(1)}$ and let $g:Y\rightarrow B_2$ be another Lefschetz fibration with $k\leq m$ critical values $q_1^{(2)},\cdots,q_k^{(2)}$. 
Suppose that 
$f:f^{-1}(D_1)\rightarrow D_1$ and $g:g^{-1}(D_2)\rightarrow D_2$ are isomorphic where $D_1\subset B_1$ is a disk containing some critical values $q_1^{(1)},\cdots, q_k^{(1)}$
and $D_2\subset B_2$ is a disk containing $q_1^{(2)},\cdots,q_k^{(2)}$.
Then, the manifolds $X\backslash f^{-1}(D_1)$ and $Y\backslash g^{-1}(D_2)$ have a diffeomorphic boundary, and after reversing the orientation of one of them, this diffeomorphism can be chosen to be fiber-preserving and orientation-reversing. Fix such a diffeomorphism $\phi$ and then glue $\overline{Y\backslash 
g^{-1}(D_2)}$, the manifold $Y\backslash g^{-1}(D_2)$ with the reversed orientation, to $X\backslash f^{-1}(D_1)$ using this diffeomorphism $\phi$. Note that the resulting manifold, denoted by $X-Y$, inherits a natural orientation and admits a smooth fibration $f\cup g : X\backslash f^{-1}(D_1) \cup
\overline{Y\backslash g^{-1}(D_2)} \rightarrow B_1\#B_2$. This is 
a Lefschetz fibration with $m-k$ singular fibers.  
In particular, for $k=m$, we get a surface bundle over a surface. 
In general, after repeatedly subtracting Lefschetz fibrations, we get $X-Y_1-Y_2-\cdots-Y_n$, a surface bundle over a surface, under the following assumptions. Let $f:X\rightarrow B_0$ be a Lefschetz fibration with $m$ critical values $\{q_{1,1}^{(0)},\cdots ,q_{1,k_1}^{(0)},q_{2,1}^{(0)},\cdots ,q_{2,k_2}^{(0)},\cdots ,q_{n,1}^{(0)},\cdots ,q_{n,k_n}^{(0)}\}$ and 
$g_1:Y_1\rightarrow B_1$, $\cdots$ , $g_n:Y_n\rightarrow B_n$
be Lefschetz fibrations with critical values
$\{q_1^{(1)},\cdots,q_{k_1}^{(1)}\}$, $\cdots$ ,
$\{q_1^{(n)},\cdots,q_{k_n}^{(n)}\}$, respectively. 
We assume that $k_1+\cdots +k_n = m$ and that
$f:f^{-1}(D_{0,i})\rightarrow D_{0,i}$ is isomorphic to $g_i:g_i^{-1}(D_i)\rightarrow D_i$ for each $1\leq i\leq n$,
where each $D_{0,i}\subset B_0$ is a disk containing 
$q_1^{(0)},\cdots q_{k_i}^{(0)}$ and $D_i \subset B_i$ is a disk containing $q^{(i)}_1,\cdots,q_{k_i}^{(i)}$.

%
%

In order to use the subtraction method explained above, we need
to construct the building blocks $X$ and $Y_i$'s. First, we describe various gluing pieces $Y_i$.

\begin{proposition}\cite{EKKOS:2002}
Let $f\geq3$ and let $a$ be a simple closed curve on $\Sigma_f$.
In the mapping class group $Mod(\Sigma_f)$,\\
\noindent(a) $t_a^2$ can be written as a product of two commutators,\\
(b) if $a$ is nonseparating, then $t_a^4$ can be written as a product 
of three commutators. 
\end{proposition}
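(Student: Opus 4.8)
The engine for both parts is the lantern relation together with the conjugation formula $\phi\, t_c\, \phi^{-1}=t_{\phi(c)}$, which turns every pair consisting of a positive and a negative twist along two topologically equivalent curves into a single commutator. Explicitly, writing $[u,v]=uvu^{-1}v^{-1}$, if $d=\phi(c)$ then
\[
t_c\, t_{d}^{-1}=t_c\,\phi\, t_c^{-1}\phi^{-1}=[t_c,\phi].
\]
Since $f\ge 3$, the planar surface $\Sigma_0^4$ can be embedded in $\Sigma_f$ so that all four boundary curves and all three interior curves of the lantern are nonseparating; any two nonseparating simple closed curves on $\Sigma_f$ are topologically equivalent, so all seven Dehn twists are mutually conjugate. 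This is essentially the only place the hypothesis $f\ge 3$ enters, and it is exactly what lets us pair twists freely.

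The combinatorial heart is the following bookkeeping lemma, proved by a short induction on $k$ using the identity above: a \emph{balanced} word
\[
t_{c_1}\cdots t_{c_k}\,t_{d_k}^{-1}\cdots t_{d_1}^{-1},
\]
in which all the curves $c_i,d_j$ are pairwise topologically equivalent, is a product of $k$ commutators. Indeed, peel off the innermost pair $t_{c_k}t_{d_k}^{-1}=[t_{c_k},\psi_k]$, move it to the front at no cost (a conjugate of a commutator is a commutator), and recurse on the balanced word of length $2(k-1)$ that remains; reordering the negative twists among themselves only replaces a curve by a topologically equivalent one, so the hypotheses are preserved. As a warm-up the method already yields something: rewriting a single lantern relation as $t_a=t_{x}t_{y}t_{z}\,t_{d}^{-1}t_{c}^{-1}t_{b}^{-1}$ exhibits any nonseparating $t_a$ as a balanced word of length $6$, hence as a product of three commutators. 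The proposition, however, asks for the sharper \emph{per-power} counts---two commutators for $t_a^{2}$ and three for $t_a^{4}$---which are far better than squaring or taking fourth powers of this bound, and these require an extra idea.

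To reach the sharp counts the plan is to compress these balanced words by exploiting a symmetry of the embedded configuration. Concretely I would choose the embedded $\Sigma_0^4$ together with a mapping class $\phi$ of $\Sigma_f$ that permutes the lantern curves---for instance one realizing the order-$3$ rotation of the four-holed sphere that fixes a single boundary component and cyclically permutes the other three together with the three interior curves. Feeding such a $\phi$ into the conjugation formula collapses the interior twists into a power of one twist and cancels the redundant boundary twists, so that the lantern relation reorganizes into $t_a^{2}$ (respectively $t_a^{4}$ when $a$ is nonseparating) set equal to a balanced word of length $4$ (respectively $6$); applying the lemma then produces the two (respectively three) commutators. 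For part (a) with $a$ \emph{separating}, I would instead take $a$ to be the distinguished boundary component of the four-holed sphere and keep the interior curves nonseparating; the pairing in the lemma must then be performed within each topological-equivalence class, which is possible because the surplus twists arranged to cancel occur in equivalent pairs by construction.

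The main obstacle is precisely this compression step: producing an explicit four-holed sphere and an explicit symmetry $\phi$ (itself a product of Dehn twists) for which the lantern relation folds into a balanced word of the claimed minimal length, and then verifying the resulting identity by the Alexander method rather than merely in homology---where, as the warm-up shows, the exponent sums are automatically consistent and hence give no information about the count. A secondary but necessary point is the bookkeeping ensuring that reabsorbing conjugates of commutators during the induction never inflates the total, so that the final tally is exactly $2$ in (a) and $3$ in (b).
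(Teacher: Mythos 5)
This proposition is not proved in the paper at all: it is quoted from \cite{EKKOS:2002}, and the technique behind it is the one displayed in the paper's own Propositions 3.3 and 3.4. Your setup reproduces the first half of that technique correctly: the identity $t_c t_{\phi(c)}^{-1}=[t_c,\phi]$, your balanced-word lemma (one commutator per twist--antitwist pair, proved by the peeling induction), and the warm-up writing a single twist as three commutators from one lantern are all fine. The genuine gap is that everything specific to the statement --- the counts two for $t_a^2$ and three for $t_a^4$ --- is deferred to a ``compression step'' which you concede you cannot carry out, and the mechanism you propose for it does not deliver. Within your framework (one commutator per pair) the claimed bounds would require writing $t_a^2$ as a balanced word of length four and $t_a^4$ as one of length six, and you never construct such expressions. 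The order-three rotation $\phi$ of the lantern does not produce them: setting $c=t_x t_{\delta_1}^{-1}$ and $v=t_{\delta_1}\phi$, conjugating the lantern relation by $\phi$ merely repackages it as $t_{\delta_4}=c\,(vcv^{-1})(v^{2}cv^{-2})$, i.e.\ it re-derives your warm-up (one twist as three commutators) and shortens nothing; note also that the boundary curve fixed by the rotation enters the lantern to the first power, so no $t_a^2$ ever appears. Nor can a single embedded lantern be folded into ``$t_a^2$ equals a length-four balanced word'' by making curves isotopic: if two boundary curves are parallel copies of $a$, the relation reads $t_a^2=t_x\,(t_y t_{\delta_3}^{-1})(t_z t_{\delta_4}^{-1})$ with one surplus twist $t_x$ (now a genus-one separating curve), and absorbing $t_x$ by forcing an interior curve to become isotopic to a boundary curve, or inessential, forces some boundary curve of the lantern to bound a disk, which is excluded.

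The idea you are missing is the halving device used in \cite{EKKOS:2002} and, verbatim, in Proposition 3.3 of this paper: a product of \emph{two} twist--antitwist pairs along simultaneously equivalent configurations is a \emph{single} commutator, namely $(t_c t_d^{-1})(t_{c'}t_{d'}^{-1})=[t_c t_d^{-1},\phi]$ whenever $\phi(c)=d'$ and $\phi(d)=c'$, and more generally $W\cdot \phi W^{-1}\phi^{-1}=[W,\phi]$ for any product of twists $W$ --- exactly the step ``$t_{D_2}t_{d_2}^{-1}t_{D_1}t_{d_1}^{-1}=[t_{D_2}t_{d_2}^{-1},\phi_1]$'' in Proposition 3.3. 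With it, part (a) follows from two lanterns sharing the curve $a$, i.e.\ a daisy relation on a five-holed sphere (which, unlike a seven-holed sphere, does embed essentially in $\Sigma_3$): it gives $t_a^{2}=(t_{x_1}t_{\delta_1}^{-1})(t_{x_2}t_{\delta_2}^{-1})(t_{x_3}t_{\delta_3}^{-1})(t_{x_4}t_{\delta_4}^{-1})$, four pairs which fold two-by-two into two commutators; part (b) is obtained by the same device applied to a more elaborate combination of embedded relations realizing $t_a^4$ times three factors of the form $W_i\phi_i W_i^{-1}\phi_i^{-1}$ (compare the paper's Proposition 3.5, where the $W_i$ are products of three twists). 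Your per-pair lemma can never beat one commutator per pair --- even on the daisy configuration it yields four commutators for $t_a^2$ --- so without this pairing-of-pairs trick the stated counts are structurally out of reach of your approach.
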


\begin{remark}
This proposition gives us two genus $f\geq 3$ Lefschetz fibrations $Y_1\rightarrow\Sigma_2$ and $Y_2\rightarrow\Sigma_3$ whose monodromy factorizations are given by $[f_1,g_1][f_2,$ $g_2]t_a^2=1$ and $[f_3,g_3][f_4,g_4][f_5,g_5]t_a^4=1$ for some mapping classes $f_i,g_i\in Mod(\Sigma_f)$ for $1\leq i\leq 5$. Generally, for every $n$, we can obtain a Lefschetz fibration which has $n$ singular fibers and the monodromy $t_a^n$ using a daisy relation. 
\end{remark}

%
%

\begin{figure}

\definecolor{qqqqff}{rgb}{0.0,0.0,1.0}
\definecolor{ffqqqq}{rgb}{1.0,0.0,0.0}
\definecolor{qqzzqq}{rgb}{0.0,0.6,0.0}

\begin{tikzpicture}[line cap=round,line join=round,>=triangle 45,x=1.0cm,y=1.0cm,scale=0.6]
\clip(-5.46,-3.24) rectangle (17.82,6.9);
\draw(1.1,1.66) circle (3.5cm);
\draw (3.72,-0.84) node[anchor=north west] {$b$};
\draw(-0.44,2.62) circle (1.0cm);
\draw(2.56,2.72) circle (1.0cm);
\draw(1.22,0.24) circle (1.0cm);
\draw (1.04,1.1) node[anchor=north west] {$c$};
\draw (2.36,3.7600000000000002) node[anchor=north west] {$a$};
\draw (-0.66,3.7600000000000002) node[anchor=north west] {$d$};
\draw [color=qqzzqq] (-0.88,2.6) circle (0.3cm);
\draw [color=qqzzqq] (0.02,2.62) circle (0.3cm);
\draw [color=ffqqqq] (2.02,2.68) circle (0.3cm);
\draw [color=ffqqqq] (2.98,2.66) circle (0.3cm);
\draw [color=qqqqff] (0.82,0.1) circle (0.3cm);
\draw [color=qqqqff] (1.68,0.14) circle (0.3cm);
\draw [color=ffqqqq](1.6600000000000001,3.06) node[anchor=north west] {$a_2$};
\draw [color=ffqqqq](2.6,3.06) node[anchor=north west] {$a_3$};
\draw [rotate around={60.67698520366726:(1.84,1.3499999999999999)},color=ffqqqq] (1.84,1.3499999999999999) ellipse (3.095531607390996cm and 1.5223061230766568cm);
\draw [color=ffqqqq](3.18,0.5) node[anchor=north west] {$x$};
\draw [rotate around={-54.37804256251799:(0.49031058991969545,1.279092341422993)},color=qqzzqq] (0.49031058991969545,1.279092341422993) ellipse (2.9471616525075603cm and 1.464227544933488cm);
\draw [color=qqzzqq](-1.46,0.78) node[anchor=north west] {$y$};
\draw [color=qqzzqq](-1.32,3.08) node[anchor=north west] {$d_1$};
\draw [color=qqzzqq](-0.36,3.08) node[anchor=north west] {$d_2$};
\draw [rotate around={0.24612781462116637:(1.0301126819927966,2.684146493214845)},color=qqqqff] (1.0301126819927966,2.684146493214845) ellipse (3.0032646659857236cm and 1.3923356829293465cm);
\draw [color=qqqqff](0.72,4.96) node[anchor=north west] {$z$};
\draw [color=qqqqff](0.46,0.52) node[anchor=north west] {$c_2$};
\draw [color=qqqqff](1.3,0.54) node[anchor=north west] {$c_1$};
\draw [color=qqqqff] (1.1,1.66) circle (3.4cm);
\draw [color=qqzzqq] (1.22,0.24) circle (1.1cm);
\draw [color=ffqqqq] (1.22,0.24) circle (1.2cm);
\draw [shift={(9.62,2.18)}] plot[domain=3.750281960917206:5.624379271062111,variable=\t]({1.0*0.8044874144447495*cos(\t r)+-0.0*0.8044874144447495*sin(\t r)},{0.0*0.8044874144447495*cos(\t r)+1.0*0.8044874144447495*sin(\t r)});
\draw [shift={(9.64,0.9)}] plot[domain=0.9964914966201953:2.198104519070659,variable=\t]({1.0*0.8099382692526632*cos(\t r)+-0.0*0.8099382692526632*sin(\t r)},{0.0*0.8099382692526632*cos(\t r)+1.0*0.8099382692526632*sin(\t r)});
\draw [rotate around={-1.0369144645300876:(9.569999999999997,1.6000000000000005)}] (9.569999999999997,1.6000000000000005) ellipse (4.030225503474254cm and 3.3700174493397355cm);
\draw [shift={(9.64,3.92)}] plot[domain=3.7502819609172047:5.624379271062109,variable=\t]({1.0*0.8044874144447508*cos(\t r)+-0.0*0.8044874144447508*sin(\t r)},{0.0*0.8044874144447508*cos(\t r)+1.0*0.8044874144447508*sin(\t r)});
\draw [shift={(9.64,2.66)}] plot[domain=0.9964914966201952:2.198104519070659,variable=\t]({1.0*0.809938269252663*cos(\t r)+-0.0*0.809938269252663*sin(\t r)},{0.0*0.809938269252663*cos(\t r)+1.0*0.809938269252663*sin(\t r)});
\draw [shift={(9.64,0.5)}] plot[domain=3.7502819609172047:5.624379271062109,variable=\t]({1.0*0.8044874144447508*cos(\t r)+-0.0*0.8044874144447508*sin(\t r)},{0.0*0.8044874144447508*cos(\t r)+1.0*0.8044874144447508*sin(\t r)});
\draw [shift={(9.66,-0.78)}] plot[domain=0.9964914966201953:2.198104519070659,variable=\t]({1.0*0.8099382692526632*cos(\t r)+-0.0*0.8099382692526632*sin(\t r)},{0.0*0.8099382692526632*cos(\t r)+1.0*0.8099382692526632*sin(\t r)});
\draw [shift={(7.34,2.2)}] plot[domain=3.7502819609172047:5.62437927106211,variable=\t]({1.0*0.8044874144447509*cos(\t r)+-0.0*0.8044874144447509*sin(\t r)},{0.0*0.8044874144447509*cos(\t r)+1.0*0.8044874144447509*sin(\t r)});
\draw [shift={(7.36,0.92)}] plot[domain=0.9964914966201953:2.198104519070659,variable=\t]({1.0*0.8099382692526632*cos(\t r)+-0.0*0.8099382692526632*sin(\t r)},{0.0*0.8099382692526632*cos(\t r)+1.0*0.8099382692526632*sin(\t r)});
\draw [shift={(11.9,2.2)}] plot[domain=3.7502819609172047:5.624379271062109,variable=\t]({1.0*0.8044874144447509*cos(\t r)+-0.0*0.8044874144447509*sin(\t r)},{0.0*0.8044874144447509*cos(\t r)+1.0*0.8044874144447509*sin(\t r)});
\draw [shift={(11.92,0.92)}] plot[domain=0.9964914966201953:2.198104519070659,variable=\t]({1.0*0.8099382692526632*cos(\t r)+-0.0*0.8099382692526632*sin(\t r)},{0.0*0.8099382692526632*cos(\t r)+1.0*0.8099382692526632*sin(\t r)});
\draw [color=qqqqff](8.56,4.6000000000000005) node[anchor=north west] {$a_2$};
\draw [shift={(11.1,-1.12)},dash pattern=on 2pt off 2pt,color=qqqqff]  plot[domain=2.6679951241961817:3.5251008932857952,variable=\t]({1.0*1.7978876494375278*cos(\t r)+-0.0*1.7978876494375278*sin(\t r)},{0.0*1.7978876494375278*cos(\t r)+1.0*1.7978876494375278*sin(\t r)});
\draw [shift={(8.06,-0.98)},color=qqqqff]  plot[domain=-0.5425793241259909:0.4324077755705381,variable=\t]({1.0*1.5880806024884249*cos(\t r)+-0.0*1.5880806024884249*sin(\t r)},{0.0*1.5880806024884249*cos(\t r)+1.0*1.5880806024884249*sin(\t r)});
\draw [color=qqqqff](9.8,-0.64) node[anchor=north west] {$c_1$};
\draw [shift={(6.12,0.14)},color=qqqqff]  plot[domain=1.0815415982956988:1.9313623272730025,variable=\t]({1.0*1.588080602488425*cos(\t r)+-0.0*1.588080602488425*sin(\t r)},{0.0*1.588080602488425*cos(\t r)+1.0*1.588080602488425*sin(\t r)});
\draw [shift={(6.22,2.92)},dash pattern=on 2pt off 2pt,color=qqqqff]  plot[domain=4.230450728427064:5.138574150327251,variable=\t]({1.0*1.4671059948074643*cos(\t r)+-0.0*1.4671059948074643*sin(\t r)},{0.0*1.4671059948074643*cos(\t r)+1.0*1.4671059948074643*sin(\t r)});
\draw [color=qqqqff](5.94,2.58) node[anchor=north west] {$c_2$};
\draw [shift={(8.42,0.08)},color=qqqqff]  plot[domain=1.081541598295699:1.9313623272730025,variable=\t]({1.0*1.5880806024884242*cos(\t r)+-0.0*1.5880806024884242*sin(\t r)},{0.0*1.5880806024884242*cos(\t r)+1.0*1.5880806024884242*sin(\t r)});
\draw [shift={(8.52,2.84)},dash pattern=on 2pt off 2pt,color=qqqqff]  plot[domain=4.230450728427064:5.13857415032725,variable=\t]({1.0*1.4671059948074643*cos(\t r)+-0.0*1.4671059948074643*sin(\t r)},{0.0*1.4671059948074643*cos(\t r)+1.0*1.4671059948074643*sin(\t r)});
\draw [color=qqqqff](8.1,2.64) node[anchor=north west] {$d_1$};
\draw [shift={(8.06,2.42)},dash pattern=on 2pt off 2pt,color=qqqqff]  plot[domain=-0.4145068745847862:0.4315730512235055,variable=\t]({1.0*1.6387800340497196*cos(\t r)+-0.0*1.6387800340497196*sin(\t r)},{0.0*1.6387800340497196*cos(\t r)+1.0*1.6387800340497196*sin(\t r)});
\draw [shift={(11.22,2.38)},color=qqqqff]  plot[domain=2.73670086730471:3.5251008932857943,variable=\t]({1.0*1.8277855454073395*cos(\t r)+-0.0*1.8277855454073395*sin(\t r)},{0.0*1.8277855454073395*cos(\t r)+1.0*1.8277855454073395*sin(\t r)});
\draw [shift={(11.08,4.16)},color=qqqqff]  plot[domain=2.66747422308429:3.568220146716669,variable=\t]({1.0*1.7084495895401768*cos(\t r)+-0.0*1.7084495895401768*sin(\t r)},{0.0*1.7084495895401768*cos(\t r)+1.0*1.7084495895401768*sin(\t r)});
\draw [shift={(8.1,4.26)},dash pattern=on 2pt off 2pt,color=qqqqff]  plot[domain=-0.4691120354852938:0.4315730512235054,variable=\t]({1.0*1.6368261972488107*cos(\t r)+-0.0*1.6368261972488107*sin(\t r)},{0.0*1.6368261972488107*cos(\t r)+1.0*1.6368261972488107*sin(\t r)});
\draw [color=qqqqff](8.620000000000001,2.96) node[anchor=north west] {$d_2$};
\draw [shift={(9.98,1.76)},color=qqqqff]  plot[domain=4.6321021651016725:6.085789747329706,variable=\t]({1.0*1.7456230979223435*cos(\t r)+-0.0*1.7456230979223435*sin(\t r)},{0.0*1.7456230979223435*cos(\t r)+1.0*1.7456230979223435*sin(\t r)});
\draw [shift={(9.4,2.48)},dash pattern=on 2pt off 2pt,color=qqqqff]  plot[domain=4.89724601240313:5.84334272436385,variable=\t]({1.0*2.502638607550039*cos(\t r)+-0.0*2.502638607550039*sin(\t r)},{0.0*2.502638607550039*cos(\t r)+1.0*2.502638607550039*sin(\t r)});
\draw [color=qqqqff](11.200000000000001,0.44) node[anchor=north west] {$b$};
\draw [shift={(10.88,3.12)},color=qqqqff]  plot[domain=-0.9169266821669524:0.4513951178302744,variable=\t]({1.0*1.7599527332175413*cos(\t r)+-0.0*1.7599527332175413*sin(\t r)},{0.0*1.7599527332175413*cos(\t r)+1.0*1.7599527332175413*sin(\t r)});
\draw [color=qqqqff](12.620000000000001,2.54) node[anchor=north west] {$a_3$};
\draw [shift={(9.84,3.34)},dash pattern=on 2pt off 2pt,color=qqqqff]  plot[domain=-0.6416796073957673:0.2014550774686881,variable=\t]({1.0*2.6741364240128105*cos(\t r)+-0.0*2.6741364240128105*sin(\t r)},{0.0*2.6741364240128105*cos(\t r)+1.0*2.6741364240128105*sin(\t r)});
\end{tikzpicture}
\caption{Supports of four lantern relations and an embedding of $\Sigma_0^{7}$ into a genus $5$ surface}
\end{figure}
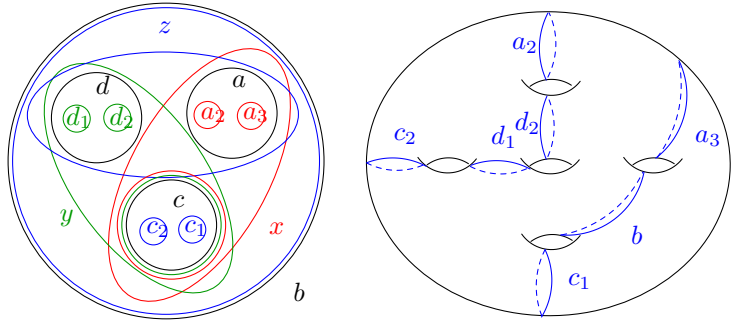

%
%

The following two propositions allow us to glue building blocks along more complicated monodromies in the sense that they are products of Dehn twists along distinct simple closed curves.

\begin{proposition}
Let $f\geq5$ and let $b$,$c$ be disjoint simple closed curves on 
$\Sigma_f$ such that $\Sigma_f-b-c$ is connected.
In $Mod(\Sigma_{f})$,
$t_b^2 t_c^2$ can be written as a product of three commutators.

\begin{proof}
We may assume $b$ and $c$ are embedded, as shown in Figure $1$, because any pair of simple closed curves whose complement in $\Sigma_f$ is connected is topologically equivalent. On the planar surface $\Sigma_0^{7}$ in Figure $1$, the following four lantern relations hold. $L_1:=t_{a}^{-1}t_{b}^{-1}t_{c}^{-1}t_{d}^{-1}t_{y}t_{x}t_{z}$ ,   
 $L_2:=t_{d}t_{D_2}t_{D_1}t_{d_1}^{-1}t_{d_2}^{-1}t_{c}^{-1}t_{y}^{-1}$,
 $L_3:=t_{x}^{-1}t_{a_2}^{-1}t_{a_3}^{-1}t_{c}^{-1}t_{a}t_{A_3}t_{A_2}$,
 $L_4:=t_{z}^{-1}t_{c_1}^{-1}t_{c_2}^{-1}t_{b}^{-1}t_{c}t_{C_2}t_{C_1}$ . Here, $D_1$ is an interior curve surrounding two boundary curves except $d_1$, and all other curves denoted by capital letters are defined similarly.
After embedding $\Sigma_0^{7}$ into $\Sigma_f$ with $f\geq5$,
as shown in Figure $1$, we have $1=L_1\cdot L_2^{t_yt_xt_z}\cdot L_3^{t_z}
\cdot L_4 = t_b^{-1}t_c^{-1}t_{D_2}t_{d_2}^{-1}t_{D_1}t_{d_1}^{-1}t_{c}^{-1}t_{A_3}t_{a_3}^{-1}t_{A_2}t_{a_2}^{-1}$
$t_{b}^{-1}t_{C_2}t_{c_2}^{-1}t_{C_1}t_{c_1}^{-1}$
in $Mod(\Sigma_f)$. Since both $\Sigma_f -D_2-d_2$ and $\Sigma_f -D_1-d_1$ are connected, $\{d_2,D_2\}$ and $\{D_1,d_1\}$ are topologically equivalent and then $t_{D_2}t_{d_2}^{-1}t_{D_1}t_{d_1}^{-1} = [t_{D_2}t_{d_2}^{-1},\phi_1]$ for some $\phi_1 \in Mod(\Sigma_f)$. Similarly, $t_{A_3}t_{a_3}^{-1}t_{A_2}t_{a_2}^{-1}=[t_{A_3}t_{a_3}^{-1},\phi_2]$ and $t_{C_2}t_{c_2}^{-1}t_{C_1}t_{c_1}^{-1}=[t_{C_2}t_{c_2}^{-1},\phi_3]$ for some
$\phi_2, \phi_3 \in Mod(\Sigma_f)$. \\
Therefore, $t_b^{2}t_c^{2}=[t_{D_2}t_{d_2}^{-1},\phi_1]^{(t_bt_c)^{-1}}[t_{A_3}t_{a_3}^{-1},\phi_2]^{t_b^{-1}}[t_{C_2}t_{c_2}^{-1},\phi_3]$.
\end{proof}
\end{proposition}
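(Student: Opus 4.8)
The plan is to build $t_b^2t_c^2$ out of lantern relations, which are essentially the only elementary relations in $Mod(\Sigma_f)$ that trade a square (or product) of Dehn twists for twists along distinct curves, and are therefore the natural source of commutators here. Before anything else I would normalize the curves: by the change of coordinates principle any two pairs of disjoint simple closed curves with connected complement are topologically equivalent, so the hypothesis on $b,c$ lets me replace them by whatever explicit model curves are convenient. I would then fix an embedding of a planar surface with several boundary components (a multiply punctured sphere) into $\Sigma_f$ so that $b$ and $c$ occur among its boundary curves. The role of the hypothesis $f\ge 5$ is exactly to provide enough genus to embed this planar piece while keeping connected the complements of all the auxiliary curves that will appear.

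Next I would record the one purely formal tool, the \emph{commutator trick}. Using $\psi t_a\psi^{-1}=t_{\psi(a)}$, a short computation gives $[t_\alpha t_\beta^{-1},\psi]=t_\alpha t_\beta^{-1}t_{\psi(\beta)}t_{\psi(\alpha)}^{-1}$; hence any ``difference pair'' $t_\alpha t_\beta^{-1}t_\gamma t_\delta^{-1}$ for which some $\psi$ carries the ordered pair $(\beta,\alpha)$ to $(\gamma,\delta)$ is a single commutator. Such a $\psi$ exists whenever the two pairs of disjoint curves are topologically equivalent, which once again is guaranteed purely by connectedness of the relevant complements. My goal is therefore reduced to rewriting $t_b^2t_c^2$ as a product of exactly three difference pairs of this form.

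The heart of the matter, and the step I expect to be the real obstacle, is the bookkeeping that produces those three pairs. I would write down several lantern relations supported on the fixed planar surface and then multiply together appropriately conjugated copies of them. The conjugating words must be chosen so that every auxiliary interior twist of each lantern, together with all surplus copies of the boundary twists, cancels, leaving on one side precisely $t_b^2t_c^2$ and on the other a product of three difference pairs. Arranging this exact cancellation is delicate: it requires tracking which twists commute (because their curves are disjoint) and in what order the conjugations act, and the correct conjugators are most easily discovered by working backwards from the cancellations one wants. Everything here is mechanical once the figure is fixed, but getting the count down to three pairs rather than more is where the efficiency of the construction lives.

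Finally I would solve the resulting identity $1=(\cdots)$ for $t_b^2t_c^2$. Because the difference pairs will emerge interleaved with stray $t_b^{-1}$ and $t_c^{-1}$ factors, isolating $t_b^2t_c^2$ introduces global conjugations of the three commutators by words such as $(t_bt_c)^{-1}$ and $t_b^{-1}$; since a conjugate of a commutator is again a commutator, the count stays at three. The only non-formal inputs to the whole argument are the explicit picture and the verification, via the Alexander method, that the chosen lantern relations actually hold on it.
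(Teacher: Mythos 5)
Your proposal follows essentially the same route as the paper's own proof: normalize $b,c$ by change of coordinates, embed a multi-holed planar surface (the paper uses $\Sigma_0^7$) carrying four lantern relations, cancel interior and surplus boundary twists by multiplying suitably conjugated lanterns, convert the three resulting difference pairs into commutators via $[t_\alpha t_\beta^{-1},\psi]=t_\alpha t_\beta^{-1}t_{\psi(\beta)}t_{\psi(\alpha)}^{-1}$ with $\psi$ supplied by connectedness of complements, and solve for $t_b^2t_c^2$ with exactly the conjugators $(t_bt_c)^{-1}$ and $t_b^{-1}$ you name. The only thing you leave implicit is the explicit choice of the four lanterns and their conjugating words, which is the bookkeeping the paper carries out in Figure~1.
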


%
%
\begin{figure}
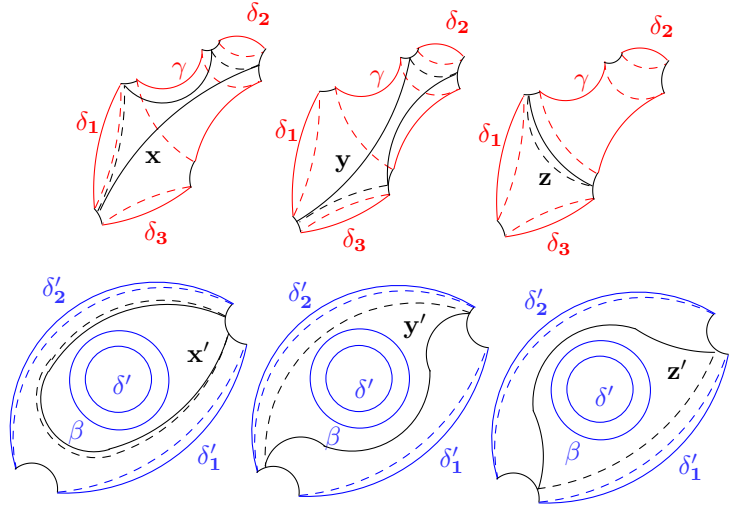


\definecolor{qqqqff}{rgb}{0.0,0.0,1.0}
\definecolor{ttttff}{rgb}{0.2,0.2,1.0}
\definecolor{ffqqqq}{rgb}{1.0,0.0,0.0}

\caption{Interior curves for two lantern relations}

\end{figure}

%
%

\begin{proposition}
Let $f\geq6$ and let $\beta$,$\gamma$ be simple closed curves 
on $\Sigma_f$ embedded, as shown in Figure $2$. In $Mod(\Sigma_f)$,
$t_\beta t_\gamma$ can be written as a product of three commutators. 

\begin{proof}
Choose two lantern relations with their supports on $\Sigma_{f}$, as shown in Figure $2$: $L_1:=t_{\gamma}^{-1}t_{\delta_1}^{-1}t_{\delta_2}^{-1}t_{\delta_3}^{-1}t_yt_xt_z$ 
and $L_2:=t_{x'}t_{z'}t_{y'}t_{\delta'}^{-1}t_{\delta_1'}^{-1}t_{\delta_2'}^{-1}t_{\beta}^{-1}$.
For interior curves, see Figure $3$.
It follows that $1=L_1\cdot L_2=t_{\gamma}^{-1}t_{\delta_2}^{-1}t_yt_{\delta_3}^{-1}t_xt_zt_{\delta_1}^{-1}
\cdot t_{x'}t_{\delta_2'}^{-1}t_{z'}t_{\delta'}^{-1}t_{y'}t_{\delta_1'}^{-1}t_{\beta}^{-1}$.
In Figure $2$ and Figure $3$, we can see that $\delta_1$ and $x'$ are separating curves on $\Sigma_f$ and that both $\Sigma_f-z-\delta_1$ and $\Sigma_f-\delta_2'-x'$ are homeomorphic to $\Sigma_1^{1}\cup\Sigma_{f-2}^{3}$. Hence, we have
$t_zt_{\delta_1}^{-1}t_{x'}t_{\delta_2'}^{-1} = [t_zt_{\delta_1}^{-1},\phi_2]$ for some $\phi_2$.
Similarly, we have
$t_{\delta_2}^{-1}t_yt_{\delta_3}^{-1}t_x=[t_{\delta_2}^{-1}t_y,\phi_1]$ and
$t_{z'}t_{\delta'}^{-1}t_{y'}t_{\delta_1'}^{-1}=[t_{z'}t_{\delta'}^{-1},\phi_3]$ for some $\phi_1$ and $\phi_3$.\\
Therefore, $t_\beta t_\gamma=[t_{\delta_2}^{-1}t_y,\phi_1]^{t_{\beta}^{-1}}[t_zt_{\delta_1}^{-1},\phi_2]^{t_{\beta}^{-1}}[t_{z'}t_{\delta'}^{-1},\phi_3]^{t_{\beta}^{-1}}$.
\end{proof}

\end{proposition}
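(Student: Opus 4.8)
The plan is to mirror the proof of the previous proposition, trading the single four-holed sphere for two lantern relations whose combined support on $\Sigma_f$ produces exactly the product $t_\beta t_\gamma$. First I would fix two embeddings $\Sigma_0^4 \hookrightarrow \Sigma_f$, arranged as in Figures $2$ and $3$, giving lantern relations
\[
L_1 = t_\gamma^{-1} t_{\delta_1}^{-1} t_{\delta_2}^{-1} t_{\delta_3}^{-1} t_y t_x t_z, \qquad L_2 = t_{x'} t_{z'} t_{y'} t_{\delta'}^{-1} t_{\delta_1'}^{-1} t_{\delta_2'}^{-1} t_\beta^{-1}.
\]
The point of the choice is that $\gamma$ occurs as a boundary curve of the first lantern and $\beta$ as a boundary curve of the second, while every other twist appears once with a positive and once with a negative exponent. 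Each $L_i$ equals $1$ in $Mod(\Sigma_f)$, and since $f\geq 6$ there is enough genus to realize both lanterns simultaneously.

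Since $L_1 = L_2 = 1$, we have $L_1 L_2 = 1$. I would then invoke the commutativity relations for disjoint curves, reading the disjointness off Figures $2$ and $3$, to reorder the factors of $L_1 L_2$ into
\[
1 = t_\gamma^{-1}\bigl(t_{\delta_2}^{-1} t_y\, t_{\delta_3}^{-1} t_x\bigr)\bigl(t_z t_{\delta_1}^{-1}\, t_{x'} t_{\delta_2'}^{-1}\bigr)\bigl(t_{z'} t_{\delta'}^{-1}\, t_{y'} t_{\delta_1'}^{-1}\bigr) t_\beta^{-1},
\]
so that the twelve interior twists fall into three blocks, each of the form $(t_u t_v^{-1})(t_{u'} t_{v'}^{-1})$.

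The key step is to fold each block into a single commutator. For the middle block I would verify from the pictures that $\delta_1$ and $x'$ are separating while $z$ and $\delta_2'$ are nonseparating, and that cutting $\Sigma_f$ along $\{z,\delta_1\}$ and along $\{\delta_2',x'\}$ each produces $\Sigma_1^1 \sqcup \Sigma_{f-2}^3$. By the change-of-coordinates principle the ordered pairs are then topologically equivalent, so there is $\phi_2 \in Mod(\Sigma_f)$ with $\phi_2(\delta_1) = x'$ and $\phi_2(z) = \delta_2'$; the relation $\psi t_a \psi^{-1} = t_{\psi(a)}$ gives $t_z t_{\delta_1}^{-1}\, t_{x'} t_{\delta_2'}^{-1} = [t_z t_{\delta_1}^{-1}, \phi_2]$. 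Running the same argument on the other two blocks produces $\phi_1, \phi_3$ with $t_{\delta_2}^{-1} t_y\, t_{\delta_3}^{-1} t_x = [t_{\delta_2}^{-1} t_y, \phi_1]$ and $t_{z'} t_{\delta'}^{-1}\, t_{y'} t_{\delta_1'}^{-1} = [t_{z'} t_{\delta'}^{-1}, \phi_3]$. Substituting these identities leaves $1 = t_\gamma^{-1}[t_{\delta_2}^{-1}t_y,\phi_1][t_z t_{\delta_1}^{-1},\phi_2][t_{z'}t_{\delta'}^{-1},\phi_3] t_\beta^{-1}$; moving $t_\gamma^{-1}$ and $t_\beta^{-1}$ across and then conjugating by $t_\beta$ (that is, applying the operation $(\cdot)^{t_\beta^{-1}}$) exhibits $t_\beta t_\gamma$ as a product of three commutators, since a conjugate of a commutator is again a commutator.

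I expect the main obstacle to be geometric bookkeeping rather than algebra: one must confirm, directly from the curve diagrams, both the exact disjointness relations justifying the reordering and, for each of the three blocks, that the surface obtained by cutting is $\Sigma_1^1 \sqcup \Sigma_{f-2}^3$ with the separating and nonseparating roles matched so that the homeomorphism realizing the equivalence carries the curves in the intended order. The hypothesis $f \geq 6$ is what I would expect to be forced precisely by the need to embed both lanterns and to realize these cut surfaces.
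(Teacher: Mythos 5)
Your proposal is correct and follows essentially the same route as the paper: the same two lantern relations $L_1$, $L_2$, the same reordering of $L_1L_2=1$ into three blocks, the same change-of-coordinates argument (matching separating/nonseparating roles and the cut surfaces $\Sigma_1^1\sqcup\Sigma_{f-2}^3$) to fold each block into a commutator, and the same final conjugation by $t_\beta^{-1}$ to exhibit $t_\beta t_\gamma$ as a product of three commutators.
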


In Proposition $11$ of \cite{EKKOS:2002}, they constructed a genus $f\geq 3$ Lefschetz fibration over a torus with $10$ singular fibers using a two-holed torus relation which is also called a $3$-chain relation. In the following three Propositions, we generalize 
this construction of a Lefschetz fibration.

%
%

\begin{figure}
\definecolor{qqqqff}{rgb}{0.0,0.0,1.0}
\begin{tikzpicture}[line cap=round,line join=round,>=triangle 45,x=1.0cm,y=1.0cm,scale=0.5]
\clip(-2.8226558265582624,-8.070894308943288) rectangle (8.494417344173444,-0.09257452574546186);
\draw [shift={(2.0316931165311676,-1.278454200542006)},color=qqqqff]  plot[domain=-0.5286057039102916:0.46642152699005807,variable=\t]({1.0*1.4276371146451883*cos(\t r)+-0.0*1.4276371146451883*sin(\t r)},{0.0*1.4276371146451883*cos(\t r)+1.0*1.4276371146451883*sin(\t r)});
\draw [shift={(4.394728346883469,-1.282573441734417)},dotted,color=qqqqff]  plot[domain=2.6802869606822184:3.6981919716000196,variable=\t]({1.0*1.3413826813843628*cos(\t r)+-0.0*1.3413826813843628*sin(\t r)},{0.0*1.3413826813843628*cos(\t r)+1.0*1.3413826813843628*sin(\t r)});
\draw [shift={(2.025817777777778,-4.039646612466124)},color=qqqqff]  plot[domain=-0.5286057039102916:0.4664215269900509,variable=\t]({1.0*1.4276371146451885*cos(\t r)+-0.0*1.4276371146451885*sin(\t r)},{0.0*1.4276371146451885*cos(\t r)+1.0*1.4276371146451885*sin(\t r)});
\draw [shift={(4.422283902439028,-4.022085636856377)},dotted,color=qqqqff]  plot[domain=2.680286960682219:3.698191971600016,variable=\t]({1.0*1.3413826813843654*cos(\t r)+-0.0*1.3413826813843654*sin(\t r)},{0.0*1.3413826813843654*cos(\t r)+1.0*1.3413826813843654*sin(\t r)});
\draw [shift={(2.079107750677507,-6.809077506775067)},color=qqqqff]  plot[domain=-0.5286057039102925:0.46642152699005685,variable=\t]({1.0*1.4276371146451867*cos(\t r)+-0.0*1.4276371146451867*sin(\t r)},{0.0*1.4276371146451867*cos(\t r)+1.0*1.4276371146451867*sin(\t r)});
\draw [shift={(4.469698536585368,-6.8131967479674955)},dotted,color=qqqqff]  plot[domain=2.6802869606822064:3.698191971600017,variable=\t]({1.0*1.3413826813843732*cos(\t r)+-0.0*1.3413826813843732*sin(\t r)},{0.0*1.3413826813843732*cos(\t r)+1.0*1.3413826813843732*sin(\t r)});
\draw(3.284332682926831,-2.7091317073170873) circle (0.7cm);
\draw(3.3475521951219513,-5.431082926829268) circle (0.7cm);
\draw [rotate around={0.8962503768615129:(2.7894617344173454,-4.095270460704711)}] (2.7894617344173454,-4.095270460704711) ellipse (4.796658876222519cm and 3.4879532811608778cm);
\draw [shift={(0.534283902439026,-4.184253658536672)}] plot[domain=0.02524715954416194:3.1668398131339552,variable=\t]({1.0*0.6956363282989587*cos(\t r)+-0.0*0.6956363282989587*sin(\t r)},{0.0*0.6956363282989587*cos(\t r)+1.0*0.6956363282989587*sin(\t r)});
\draw [shift={(0.5342839024390253,-3.89449756097561)}] plot[domain=-3.106884375364518:0.034708278225275,variable=\t]({1.0*0.759091325658558*cos(\t r)+-0.0*0.759091325658558*sin(\t r)},{0.0*0.759091325658558*cos(\t r)+1.0*0.759091325658558*sin(\t r)});
\draw [rotate around={0.0:(0.5421863414634164,-4.096448780487974)},color=qqqqff] (0.5421863414634164,-4.096448780487974) ellipse (1.174240923106269cm and 1.0399635736371289cm);
\draw [shift={(2.705799457994582,-4.298536585366058)},color=qqqqff]  plot[domain=1.581665463972167:2.7509856098921084,variable=\t]({1.0*1.9946977695846446*cos(\t r)+-0.0*1.9946977695846446*sin(\t r)},{0.0*1.9946977695846446*cos(\t r)+1.0*1.9946977695846446*sin(\t r)});
\draw [shift={(0.797940379403796,-1.5017886178863873)},dotted,color=qqqqff]  plot[domain=4.754916515004474:5.85017944555596,variable=\t]({1.0*2.0397846697173474*cos(\t r)+-0.0*2.0397846697173474*sin(\t r)},{0.0*2.0397846697173474*cos(\t r)+1.0*2.0397846697173474*sin(\t r)});
\draw [shift={(2.467317073170734,-3.9299728997292065)},color=qqqqff]  plot[domain=3.541833310623767:4.826829285654753,variable=\t]({1.0*1.8361748570397534*cos(\t r)+-0.0*1.8361748570397534*sin(\t r)},{0.0*1.8361748570397534*cos(\t r)+1.0*1.8361748570397534*sin(\t r)});
\draw [shift={(0.7762601626016282,-6.900162601626227)},dotted,color=qqqqff]  plot[domain=0.5719181998612627:1.5930148921216158,variable=\t]({1.0*2.2431429792381357*cos(\t r)+-0.0*2.2431429792381357*sin(\t r)},{0.0*2.2431429792381357*cos(\t r)+1.0*2.2431429792381357*sin(\t r)});
\draw [shift={(-1.0448780487804858,-5.490948509485304)},color=qqqqff]  plot[domain=0.8798530987924942:2.279422598922568,variable=\t]({1.0*1.4628926098937516*cos(\t r)+-0.0*1.4628926098937516*sin(\t r)},{0.0*1.4628926098937516*cos(\t r)+1.0*1.4628926098937516*sin(\t r)});
\draw [shift={(-1.0448780487804858,-3.0627642276424853)},dotted,color=qqqqff]  plot[domain=4.098369943376749:5.359221458088224,variable=\t]({1.0*1.6180473198521663*cos(\t r)+-0.0*1.6180473198521663*sin(\t r)},{0.0*1.6180473198521663*cos(\t r)+1.0*1.6180473198521663*sin(\t r)});
\draw [color=qqqqff](-0.004227642276420151,-2.087154471544918) node[anchor=north west] {$\mathbf{\beta}$};
\draw [color=qqqqff](3.768130081300815,-0.98146341463435) node[anchor=north west] {$\mathbf{\delta_1}$};
\draw [color=qqqqff](3.833170731707319,-6.3364769647698465) node[anchor=north west] {$\mathbf{\delta_2}$};
\draw [color=qqqqff](3.833170731707319,-3.604769647696678) node[anchor=north west] {$\mathbf{\delta_3}$};
\draw [color=qqqqff](-1.6085636856368535,-3.101246612466326) node[anchor=north west] {$\mathbf{\alpha_1}$};
\draw [color=qqqqff](1.0797831978319807,-5.599349593496135) node[anchor=north west] {$\mathbf{\alpha_2}$};
\draw [color=qqqqff](1.470027100271005,-2.6725203252034544) node[anchor=north west] {$\mathbf{\alpha_3}$};
\draw (5.404227642276425,-3.322926829268494) node[anchor=north west] {$\mathbf{\ldots}$};
\draw (5.129105691056912,-4.66710027100291) node[anchor=north west] {$f-3$};
\draw (5.29867208672088,-4.086612466124862) node[anchor=north west] {$ \underbrace{}$};
\draw [shift={(6.651598915989159,-3.778211382114026)}] plot[domain=3.5124839424024565:5.608444364956036,variable=\t]({1.0*0.4187145347604966*cos(\t r)+-0.0*0.4187145347604966*sin(\t r)},{0.0*0.4187145347604966*cos(\t r)+1.0*0.4187145347604966*sin(\t r)});
\draw [shift={(6.5865582655826564,-4.4069376693769)}] plot[domain=0.7483780475235196:2.2367655641740045,variable=\t]({1.0*0.4141999604236925*cos(\t r)+-0.0*0.4141999604236925*sin(\t r)},{0.0*0.4141999604236925*cos(\t r)+1.0*0.4141999604236925*sin(\t r)});
\draw [shift={(5.199024390243904,-3.691490514905354)}] plot[domain=3.5124839424024548:5.608444364956036,variable=\t]({1.0*0.4187145347604982*cos(\t r)+-0.0*0.4187145347604982*sin(\t r)},{0.0*0.4187145347604982*cos(\t r)+1.0*0.4187145347604982*sin(\t r)});
\draw [shift={(5.1339837398374,-4.320216802168226)}] plot[domain=0.7483780475235196:2.2367655641740063,variable=\t]({1.0*0.4141999604236925*cos(\t r)+-0.0*0.4141999604236925*sin(\t r)},{0.0*0.4141999604236925*cos(\t r)+1.0*0.4141999604236925*sin(\t r)});
\end{tikzpicture}

\caption{Support of a star relation}

%
%

\definecolor{ffzzqq}{rgb}{1.0,0.6,0.0}
\definecolor{ffxfqq}{rgb}{1.0,0.4980392156862745,0.0}
\definecolor{ffqqqq}{rgb}{1.0,0.0,0.0}
\definecolor{qqqqff}{rgb}{0.0,0.0,1.0}
\begin{tikzpicture}[line cap=round,line join=round,>=triangle 45,x=1.0cm,y=1.0cm,scale=0.5]
\clip(-13.415637860082306,-3.0864197530864192) rectangle (9.08093278463649,6.74074074074074);
\draw [rotate around={0.8962503768615124:(3.0323343631436313,1.411504607046175)}] (3.0323343631436313,1.411504607046175) ellipse (4.796658876222521cm and 3.487953281160879cm);
\draw [shift={(0.8354146341463418,1.3975609756097556)}] plot[domain=0.0252471595441613:3.1668398131339544,variable=\t]({1.0*0.6956363282989587*cos(\t r)+-0.0*0.6956363282989587*sin(\t r)},{0.0*0.6956363282989587*cos(\t r)+1.0*0.6956363282989587*sin(\t r)});
\draw [shift={(0.8213658536585364,1.6536585365853655)}] plot[domain=-3.106884375364518:0.034708278225275,variable=\t]({1.0*0.7590913256585574*cos(\t r)+-0.0*0.7590913256585574*sin(\t r)},{0.0*0.7590913256585574*cos(\t r)+1.0*0.7590913256585574*sin(\t r)});
\draw(3.98,2.98) circle (0.7cm);
\draw(3.92,0.18) circle (0.7cm);
\draw [shift={(3.28,1.04)},color=qqqqff]  plot[domain=1.5816654639721657:2.7509856098921075,variable=\t]({1.0*1.9946977695846473*cos(\t r)+-0.0*1.9946977695846473*sin(\t r)},{0.0*1.9946977695846473*cos(\t r)+1.0*1.9946977695846473*sin(\t r)});
\draw [shift={(1.4,3.8)},dotted,color=qqqqff]  plot[domain=4.754916515004474:5.85017944555596,variable=\t]({1.0*2.0397846697173474*cos(\t r)+-0.0*2.0397846697173474*sin(\t r)},{0.0*2.0397846697173474*cos(\t r)+1.0*2.0397846697173474*sin(\t r)});
\draw [shift={(-0.78,0.1)},color=qqqqff]  plot[domain=0.879853098792494:2.279422598922568,variable=\t]({1.0*1.4628926098937511*cos(\t r)+-0.0*1.4628926098937511*sin(\t r)},{0.0*1.4628926098937511*cos(\t r)+1.0*1.4628926098937511*sin(\t r)});
\draw [shift={(-0.7577960975609748,2.485391869918491)},dotted,color=qqqqff]  plot[domain=4.098369943376749:5.359221458088224,variable=\t]({1.0*1.6180473198521663*cos(\t r)+-0.0*1.6180473198521663*sin(\t r)},{0.0*1.6180473198521663*cos(\t r)+1.0*1.6180473198521663*sin(\t r)});
\draw [shift={(2.6,1.62)},color=qqqqff]  plot[domain=-0.5286057039102916:0.46642152699005035,variable=\t]({1.0*1.4276371146451885*cos(\t r)+-0.0*1.4276371146451885*sin(\t r)},{0.0*1.4276371146451885*cos(\t r)+1.0*1.4276371146451885*sin(\t r)});
\draw [shift={(2.68,-1.24)},color=qqqqff]  plot[domain=-0.5286057039102925:0.46642152699005773,variable=\t]({1.0*1.427637114645185*cos(\t r)+-0.0*1.427637114645185*sin(\t r)},{0.0*1.427637114645185*cos(\t r)+1.0*1.427637114645185*sin(\t r)});
\draw [shift={(5.0,-1.26)},dotted,color=qqqqff]  plot[domain=2.5959375990628013:3.705853276176649,variable=\t]({1.0*1.310267148332736*cos(\t r)+-0.0*1.310267148332736*sin(\t r)},{0.0*1.310267148332736*cos(\t r)+1.0*1.310267148332736*sin(\t r)});
\draw [shift={(4.92,1.6)},dotted,color=qqqqff]  plot[domain=2.5959375990628017:3.7058532761766494,variable=\t]({1.0*1.310267148332736*cos(\t r)+-0.0*1.310267148332736*sin(\t r)},{0.0*1.310267148332736*cos(\t r)+1.0*1.310267148332736*sin(\t r)});
\draw [color=qqqqff](4.334705075445816,-0.5925925925925924) node[anchor=north west] {$\mathbf{\delta_2}$};
\draw [color=qqqqff](4.224965706447188,2.172839506172839) node[anchor=north west] {$\mathbf{\delta_3}$};
\draw [color=qqqqff](-1.207133058984911,2.543209876543209) node[anchor=north west] {$\mathbf{\alpha_1}$};
\draw [color=qqqqff](1.59122085048011,3.7283950617283943) node[anchor=north west] {$\mathbf{\alpha_3}$};
\draw [shift={(4.56,1.6)},dotted,color=ffqqqq]  plot[domain=2.4831672798064237:3.821006458997642,variable=\t]({1.0*1.3401492454200759*cos(\t r)+-0.0*1.3401492454200759*sin(\t r)},{0.0*1.3401492454200759*cos(\t r)+1.0*1.3401492454200759*sin(\t r)});
\draw [shift={(-1.64,5.58)},color=ffxfqq]  plot[domain=4.78196291545526:5.754057673255315,variable=\t]({1.0*5.753920402647225*cos(\t r)+-0.0*5.753920402647225*sin(\t r)},{0.0*5.753920402647225*cos(\t r)+1.0*5.753920402647225*sin(\t r)});
\draw [shift={(-2.92,7.7)},dotted,color=ffxfqq]  plot[domain=4.91370608875933:5.609224091893532,variable=\t]({1.0*8.001599840031993*cos(\t r)+-0.0*8.001599840031993*sin(\t r)},{0.0*8.001599840031993*cos(\t r)+1.0*8.001599840031993*sin(\t r)});
\draw [shift={(-1.24,2.48)},dotted,color=ffqqqq]  plot[domain=4.585796853104718:5.556542966497861,variable=\t]({1.0*2.2177466041006584*cos(\t r)+-0.0*2.2177466041006584*sin(\t r)},{0.0*2.2177466041006584*cos(\t r)+1.0*2.2177466041006584*sin(\t r)});
\draw [shift={(1.5,1.1)},color=ffqqqq]  plot[domain=3.230642236224291:5.497787143782138,variable=\t]({1.0*1.124455423749648*cos(\t r)+-0.0*1.124455423749648*sin(\t r)},{0.0*1.124455423749648*cos(\t r)+1.0*1.124455423749648*sin(\t r)});
\draw [color=ffqqqq](1.9478737997256517,-0.24691358024691354) node[anchor=north west] {$\mathbf{\sigma_2}$};
\draw [color=ffxfqq](1.4540466392318245,1.1358024691358022) node[anchor=north west] {$\mathbf{\gamma_2}$};
\draw [shift={(1.2620027434842251,3.4567901234567895)},color=ffqqqq]  plot[domain=3.9998274206111772:4.307034873281875,variable=\t]({1.0*4.224704279407645*cos(\t r)+-0.0*4.224704279407645*sin(\t r)},{0.0*4.224704279407645*cos(\t r)+1.0*4.224704279407645*sin(\t r)});
\draw [rotate around={0.8962503768615159:(-7.706815156746632,1.4781712737128385)}] (-7.706815156746632,1.4781712737128385) ellipse (4.796658876222525cm and 3.487953281160881cm);
\draw(-6.9191495198902615,2.7866666666666635) circle (0.7cm);
\draw(-6.9191495198902615,0.22666666666666324) circle (0.7cm);
\draw [shift={(-8.11914951989026,4.226666666666665)},color=qqqqff]  plot[domain=-0.5286057039102916:0.4664215269900558,variable=\t]({1.0*1.427637114645189*cos(\t r)+-0.0*1.427637114645189*sin(\t r)},{0.0*1.427637114645189*cos(\t r)+1.0*1.427637114645189*sin(\t r)});
\draw [shift={(-5.759149519890261,4.166666666666664)},dotted,color=qqqqff]  plot[domain=2.5860989183242493:3.6536938574755973,variable=\t]({1.0*1.3652838532700813*cos(\t r)+-0.0*1.3652838532700813*sin(\t r)},{0.0*1.3652838532700813*cos(\t r)+1.0*1.3652838532700813*sin(\t r)});
\draw [shift={(-8.219149519890262,-1.1933333333333367)},color=qqqqff]  plot[domain=-0.5286057039102916:0.46642152699005734,variable=\t]({1.0*1.4276371146451872*cos(\t r)+-0.0*1.4276371146451872*sin(\t r)},{0.0*1.4276371146451872*cos(\t r)+1.0*1.4276371146451872*sin(\t r)});
\draw [shift={(-5.839149519890261,-1.2533333333333367)},dotted,color=qqqqff]  plot[domain=2.5860989183242493:3.653693857475597,variable=\t]({1.0*1.3652838532700813*cos(\t r)+-0.0*1.3652838532700813*sin(\t r)},{0.0*1.3652838532700813*cos(\t r)+1.0*1.3652838532700813*sin(\t r)});
\draw [shift={(-9.88373488574392,1.3642276422764188)}] plot[domain=0.025247159544161284:3.1668398131339544,variable=\t]({1.0*0.6956363282989592*cos(\t r)+-0.0*0.6956363282989592*sin(\t r)},{0.0*0.6956363282989592*cos(\t r)+1.0*0.6956363282989592*sin(\t r)});
\draw [shift={(-9.897783666231724,1.620325203252029)}] plot[domain=-3.106884375364518:0.034708278225275,variable=\t]({1.0*0.7590913256585577*cos(\t r)+-0.0*0.7590913256585577*sin(\t r)},{0.0*0.7590913256585577*cos(\t r)+1.0*0.7590913256585577*sin(\t r)});
\draw [shift={(-7.519149519890261,1.1266666666666634)},color=qqqqff]  plot[domain=1.5816654639721666:2.7509856098921075,variable=\t]({1.0*1.9946977695846495*cos(\t r)+-0.0*1.9946977695846495*sin(\t r)},{0.0*1.9946977695846495*cos(\t r)+1.0*1.9946977695846495*sin(\t r)});
\draw [shift={(-9.419149519890262,3.9066666666666636)},dotted,color=qqqqff]  plot[domain=4.754916515004474:5.850179445555959,variable=\t]({1.0*2.0397846697173483*cos(\t r)+-0.0*2.0397846697173483*sin(\t r)},{0.0*2.0397846697173483*cos(\t r)+1.0*2.0397846697173483*sin(\t r)});
\draw [shift={(-7.739149519890262,1.6666666666666634)},color=qqqqff]  plot[domain=3.5418333106237676:4.826829285654754,variable=\t]({1.0*1.8361748570397518*cos(\t r)+-0.0*1.8361748570397518*sin(\t r)},{0.0*1.8361748570397518*cos(\t r)+1.0*1.8361748570397518*sin(\t r)});
\draw [shift={(-9.399149519890262,-1.3333333333333368)},dotted,color=qqqqff]  plot[domain=0.5719181998612628:1.593014892121615,variable=\t]({1.0*2.2431429792381365*cos(\t r)+-0.0*2.2431429792381365*sin(\t r)},{0.0*2.2431429792381365*cos(\t r)+1.0*2.2431429792381365*sin(\t r)});
\draw [shift={(-8.979149519890262,1.3466666666666631)},color=ffzzqq]  plot[domain=2.1211822744716877:4.2522256778410235,variable=\t]({1.0*2.06494551986245*cos(\t r)+-0.0*2.06494551986245*sin(\t r)},{0.0*2.06494551986245*cos(\t r)+1.0*2.06494551986245*sin(\t r)});
\draw [shift={(-8.319149519890262,0.6266666666666634)},color=ffxfqq]  plot[domain=1.1988786955441153:2.2397030294976954,variable=\t]({1.0*3.02694565527695*cos(\t r)+-0.0*3.02694565527695*sin(\t r)},{0.0*3.02694565527695*cos(\t r)+1.0*3.02694565527695*sin(\t r)});
\draw [shift={(-8.819149519890262,1.7466666666666635)},color=ffxfqq]  plot[domain=4.245144264596339:5.307711154414342,variable=\t]({1.0*2.4865236777477104*cos(\t r)+-0.0*2.4865236777477104*sin(\t r)},{0.0*2.4865236777477104*cos(\t r)+1.0*2.4865236777477104*sin(\t r)});
\draw [shift={(-8.50480109739369,0.6913580246913579)},dotted,color=ffxfqq]  plot[domain=1.1391893657896666:2.324726004493835,variable=\t]({1.0*3.0555156816003453*cos(\t r)+-0.0*3.0555156816003453*sin(\t r)},{0.0*3.0555156816003453*cos(\t r)+1.0*3.0555156816003453*sin(\t r)});
\draw [shift={(-8.943758573388203,1.3827160493827158)},dotted,color=ffxfqq]  plot[domain=2.352816124666852:4.024686485661118,variable=\t]({1.0*2.296897686309359*cos(\t r)+-0.0*2.296897686309359*sin(\t r)},{0.0*2.296897686309359*cos(\t r)+1.0*2.296897686309359*sin(\t r)});
\draw [shift={(-8.916323731138547,1.5555555555555554)},dotted,color=ffxfqq]  plot[domain=3.987426682923491:5.3440796147236584,variable=\t]({1.0*2.441016534521877*cos(\t r)+-0.0*2.441016534521877*sin(\t r)},{0.0*2.441016534521877*cos(\t r)+1.0*2.441016534521877*sin(\t r)});
\draw [shift={(-7.659149519890262,1.326666666666663)},dotted,color=ffqqqq]  plot[domain=1.4885083370202148:2.7973739706073255,variable=\t]({1.0*1.9465867563507164*cos(\t r)+-0.0*1.9465867563507164*sin(\t r)},{0.0*1.9465867563507164*cos(\t r)+1.0*1.9465867563507164*sin(\t r)});
\draw [shift={(-6.099149519890261,-1.2133333333333367)},dotted,color=ffqqqq]  plot[domain=2.553590050042226:3.6820121538603767,variable=\t]({1.0*1.3701094846763164*cos(\t r)+-0.0*1.3701094846763164*sin(\t r)},{0.0*1.3701094846763164*cos(\t r)+1.0*1.3701094846763164*sin(\t r)});
\draw [shift={(-9.11914951989026,1.6266666666666634)},color=ffqqqq]  plot[domain=3.982036864574819:5.460304707090476,variable=\t]({1.0*2.7802405398586254*cos(\t r)+-0.0*2.7802405398586254*sin(\t r)},{0.0*2.7802405398586254*cos(\t r)+1.0*2.7802405398586254*sin(\t r)});
\draw [shift={(-6.976844993141292,6.121975308641973)},color=ffqqqq]  plot[domain=4.223986754343837:4.686285250494962,variable=\t]({1.0*8.050912663979569*cos(\t r)+-0.0*8.050912663979569*sin(\t r)},{0.0*8.050912663979569*cos(\t r)+1.0*8.050912663979569*sin(\t r)});
\draw [color=qqqqff](-8.88888888888889,2.9123456790123453) node[anchor=north west] {$\mathbf{\alpha_3}$};
\draw [color=qqqqff](-8.906584362139919,0.9135802469135801) node[anchor=north west] {$\mathbf{\alpha_2}$};
\draw [color=qqqqff](-6.584362139917696,4.6913580246913575) node[anchor=north west] {$\mathbf{\delta_1}$};
\draw [color=qqqqff](-6.748971193415638,-0.6666666666666665) node[anchor=north west] {$\mathbf{\delta_2}$};
\draw [color=ffqqqq](-8.23045267489712,-0.8641975308641974) node[anchor=north west] {$\mathbf{\sigma_1}$};
\draw [color=ffxfqq](-10.617283950617285,0.8888888888888887) node[anchor=north west] {$\mathbf{\gamma_1}$};
\draw [shift={(-9.492455418381345,1.259259259259259)},color=ffqqqq]  plot[domain=0.7713722861009187:1.7662871270435776,variable=\t]({1.0*2.869110215131463*cos(\t r)+-0.0*2.869110215131463*sin(\t r)},{0.0*2.869110215131463*cos(\t r)+1.0*2.869110215131463*sin(\t r)});
\draw [shift={(-9.218106995884774,1.6543209876543206)},color=ffqqqq]  plot[domain=1.9018920253307636:2.8806483113702104,variable=\t]({1.0*2.558351419826457*cos(\t r)+-0.0*2.558351419826457*sin(\t r)},{0.0*2.558351419826457*cos(\t r)+1.0*2.558351419826457*sin(\t r)});
\draw [shift={(-9.026063100137176,1.3086419753086418)},color=ffqqqq]  plot[domain=2.7814807524256926:4.063355219803807,variable=\t]({1.0*2.872889124387203*cos(\t r)+-0.0*2.872889124387203*sin(\t r)},{0.0*2.872889124387203*cos(\t r)+1.0*2.872889124387203*sin(\t r)});
\draw [shift={(-10.480109739369,1.5802469135802466)},color=ffqqqq]  plot[domain=0.3733262082953229:2.816604346383827,variable=\t]({1.0*1.0320692441921246*cos(\t r)+-0.0*1.0320692441921246*sin(\t r)},{0.0*1.0320692441921246*cos(\t r)+1.0*1.0320692441921246*sin(\t r)});
\draw [shift={(0.7133058984910837,1.7037037037037033)},color=ffqqqq]  plot[domain=4.229152073168019:5.497162534244378,variable=\t]({1.0*2.4045967647383253*cos(\t r)+-0.0*2.4045967647383253*sin(\t r)},{0.0*2.4045967647383253*cos(\t r)+1.0*2.4045967647383253*sin(\t r)});
\draw [shift={(4.471879286694102,-1.8518518518518514)},color=ffqqqq]  plot[domain=1.9164630816653598:2.408777551803287,variable=\t]({1.0*2.77789931501165*cos(\t r)+-0.0*2.77789931501165*sin(\t r)},{0.0*2.77789931501165*cos(\t r)+1.0*2.77789931501165*sin(\t r)});
\draw [shift={(5.816186556927298,0.3703703703703703)},color=ffqqqq]  plot[domain=2.4049662848393756:2.911386657641128,variable=\t]({1.0*3.13923790849293*cos(\t r)+-0.0*3.13923790849293*sin(\t r)},{0.0*3.13923790849293*cos(\t r)+1.0*3.13923790849293*sin(\t r)});
\draw [shift={(0.5761316872427984,1.8518518518518514)},color=ffqqqq]  plot[domain=5.550408332939107:5.946141315458713,variable=\t]({1.0*2.312569517196533*cos(\t r)+-0.0*2.312569517196533*sin(\t r)},{0.0*2.312569517196533*cos(\t r)+1.0*2.312569517196533*sin(\t r)});
\draw [shift={(-8.751714677640605,1.2345679012345676)},color=ffqqqq]  plot[domain=2.8970987031443225:3.7973662221538524,variable=\t]({1.0*2.7893968975903918*cos(\t r)+-0.0*2.7893968975903918*sin(\t r)},{0.0*2.7893968975903918*cos(\t r)+1.0*2.7893968975903918*sin(\t r)});
\draw (-5.148010973936901,2.518518518518518) node[anchor=north west] {$\mathbf{\ldots}$};
\draw (-5.4900548696845,1.185185185185185) node[anchor=north west] {$f-3$};
\draw (-5.292880658436215,1.776172839506168) node[anchor=north west] {$ \underbrace{}$};
\draw [shift={(-3.9206397002241653,2.02710027100271)}] plot[domain=3.512483942402456:5.6084443649560365,variable=\t]({1.0*0.41871453476049647*cos(\t r)+-0.0*0.41871453476049647*sin(\t r)},{0.0*0.41871453476049647*cos(\t r)+1.0*0.41871453476049647*sin(\t r)});
\draw [shift={(-3.985680350630668,1.3983739837398363)}] plot[domain=0.7483780475235196:2.2367655641740045,variable=\t]({1.0*0.4141999604236925*cos(\t r)+-0.0*0.4141999604236925*sin(\t r)},{0.0*0.4141999604236925*cos(\t r)+1.0*0.4141999604236925*sin(\t r)});
\draw [shift={(-5.37321422596942,2.1138211382113825)}] plot[domain=3.512483942402456:5.608444364956035,variable=\t]({1.0*0.4187145347604984*cos(\t r)+-0.0*0.4187145347604984*sin(\t r)},{0.0*0.4187145347604984*cos(\t r)+1.0*0.4187145347604984*sin(\t r)});
\draw [shift={(-5.438254876375924,1.4850948509485098)}] plot[domain=0.7483780475235196:2.2367655641740063,variable=\t]({1.0*0.4141999604236925*cos(\t r)+-0.0*0.4141999604236925*sin(\t r)},{0.0*0.4141999604236925*cos(\t r)+1.0*0.4141999604236925*sin(\t r)});
\draw (5.731056241426612,2.518518518518518) node[anchor=north west] {$\mathbf{\ldots}$};
\draw (5.429012345679013,1.185185185185185) node[anchor=north west] {$f-3$};
\draw (5.616186556927298,1.8106172839506168) node[anchor=north west] {$ \underbrace{}$};
\draw [shift={(6.998427515139347,2.02710027100271)}] plot[domain=3.5124839424024565:5.608444364956036,variable=\t]({1.0*0.4187145347604966*cos(\t r)+-0.0*0.4187145347604966*sin(\t r)},{0.0*0.4187145347604966*cos(\t r)+1.0*0.4187145347604966*sin(\t r)});
\draw [shift={(6.933386864732844,1.3983739837398361)}] plot[domain=0.7483780475235196:2.2367655641740045,variable=\t]({1.0*0.4141999604236925*cos(\t r)+-0.0*0.4141999604236925*sin(\t r)},{0.0*0.4141999604236925*cos(\t r)+1.0*0.4141999604236925*sin(\t r)});
\draw [shift={(5.545852989394092,2.113821138211382)}] plot[domain=3.5124839424024548:5.608444364956036,variable=\t]({1.0*0.4187145347604982*cos(\t r)+-0.0*0.4187145347604982*sin(\t r)},{0.0*0.4187145347604982*cos(\t r)+1.0*0.4187145347604982*sin(\t r)});
\draw [shift={(5.480812338987588,1.4850948509485096)}] plot[domain=0.7483780475235196:2.2367655641740063,variable=\t]({1.0*0.4141999604236925*cos(\t r)+-0.0*0.4141999604236925*sin(\t r)},{0.0*0.4141999604236925*cos(\t r)+1.0*0.4141999604236925*sin(\t r)});
\end{tikzpicture}
\caption{Supports of two lantern relations}
\end{figure}

%
%

\begin{proposition}
Let $f\geq3$ and let $\{\alpha_1, \alpha_2\}$ be any pair of nonseparating simple closed curves on $\Sigma_f$ such that $\Sigma_f-\alpha_1-\alpha_2$ is connected. 
Then there exists a genus $f$ Lefschetz fibration $X$ over $\Sigma_3$ 
which has six singular fibers, four of which have monodromy $t_{\alpha_1}$ and two of which have monodromy $t_{\alpha_2}$.

\begin{proof}
We use the star relation $E:=t_{\delta_3}^{-1}t_{\delta_2}^{-1}t_{\delta_1}^{-1}(t_{\alpha_1}t_{\alpha_2}t_{\alpha_3}t_{\beta})^{3}$
supported on $\Sigma_1^{3}\hookrightarrow \Sigma_f$ (Figure $4$). Also, consider the following lantern relations whose supports are given in Figure $5$ :  
$L_1:= t_{\alpha_3}^{-1}t_{\alpha_2}^{-1}t_{\delta_1}^{-1}t_{\delta_2}^{-1}t_{\sigma_1}t_{\alpha_1}t_{\gamma_1}$,
$L_2:=t_{\alpha_3}^{-1}t_{\alpha_1}^{-1}t_{\delta_2}^{-1}t_{\delta_3}^{-1}$ $t_{\sigma_2}t_{\alpha_2}t_{\gamma_2}$. Let $W_0:=t_{\beta}(t_{\alpha_1}t_{\alpha_2}t_{\alpha_3}t_{\beta})^2$,
$W_1:=t_{\beta}t_{\alpha_1}t_{\alpha_2}t_{\alpha_3}t_{\beta}$, and
$W_2:=t_{\beta}$.
Then, by using commutativity relations and braid relations,
\begin{eqnarray*}
1&=&E\cdot(W_0^{-1}L_1W_0)\cdot(W_1^{-1}L_1W_1)\cdot(W_2^{-1}L_2W_2)\\
&=& t_{\delta_3}^{-1}t_{\delta_2}^{-1}t_{\delta_1}^{-1}t_{\alpha_1}
t_{\delta_1}^{-1}t_{\delta_2}^{-1}t_{\sigma_1}t_{\alpha_1}t_{\gamma_1}
t_{\beta}t_{\alpha_1}t_{\delta_1}^{-1}t_{\delta_2}^{-1}t_{\sigma_1}t_{\alpha_1}t_{\gamma_1}
t_{\beta}t_{\alpha_2}t_{\delta_2}^{-1}t_{\delta_3}^{-1}t_{\sigma_2}t_{\alpha_2}t_{\gamma_2}t_{\beta}\\ 
&=& t_{\alpha_1}t_{\delta_1}^{-1}t_{\sigma_1}t_{\delta_2}^{-1}t_{\alpha_1}t_{\gamma_1}t_{\delta_1}^{-1}t_{\beta}
t_{\alpha_1}t_{\delta_1}^{-1}t_{\sigma_1}t_{\delta_2}^{-1}t_{\alpha_1}t_{\gamma_1}t_{\delta_2}^{-1}t_{\beta}
t_{\alpha_2}t_{\delta_2}^{-1}t_{\sigma_2}t_{\delta_3}^{-1}t_{\alpha_2}t_{\gamma_2}t_{\delta_3}^{-1}t_{\beta}\\
&=&
t_{\alpha_1}^{2}\{t_{\delta_1}^{-1}t_{t_{\alpha_1}^{-1}(\sigma_1)}
t_{\delta_2}^{-1}t_{\gamma_1}t_{\delta_1}^{-1}t_{\beta}\}
t_{\alpha_1}^{2}\{t_{\delta_1}^{-1}
t_{t_{\alpha_1}^{-1}(\sigma_1)}
t_{\delta_2}^{-1}t_{\gamma_1}t_{\delta_2}^{-1}t_{\beta}\}\\
& & t_{\alpha_2}^{2}\{t_{\delta_2}^{-1}
t_{t_{\alpha_2}^{-1}(\sigma_2)}
t_{\delta_3}^{-1}t_{\gamma_2}t_{\delta_3}^{-1}t_{\beta}\}\\
&=& 
t_{\alpha_1}^{2}[t_{\delta_1}^{-1}t_{t_{\alpha_1}^{-1}(\sigma_1)}t_{\delta_2}^{-1},\phi_1]
t_{\alpha_1}^{2}[t_{\delta_1}^{-1}t_{t_{\alpha_1}^{-1}(\sigma_1)}t_{\delta_2}^{-1},\phi_2]
t_{\alpha_2}^{2}[t_{\delta_2}^{-1}t_{t_{\alpha_2}^{-1}(\sigma_2)}t_{\delta_3}^{-1},\phi_3]
\end{eqnarray*}

For the last equality, we need to verify that there exists a self-homeomorphism
$\phi_1$ of $\Sigma_f$ sending $\delta_1$, $t_{\alpha_1}^{-1}(\sigma_1)$, and
$\delta_2$ to $\beta$, $\delta_1$, and $\gamma_1$, respectively.
First, it is easy to check that $\sigma_1=t_{\beta}^{-1}t_{\alpha_2}^{-1}t_{\alpha_1}t_{\alpha_3}^{-1}(\beta)$.
Hence, the self-homeomorphism $t_{\alpha_3}t_{\alpha_1}^{-1}t_{\alpha_2}t_{\beta}t_{\alpha_1}$ 
sends  $\delta_1$, $t_{\alpha_1}^{-1}(\sigma_1)$, and
$\delta_2$ to $\delta_1$, $\beta$, and $\delta_2$, respectively. Also, there exists a homeomorphism sending $\delta_1$, $\beta$, and $\delta_2$ to $\beta$, $\delta_1$, and $\gamma_1$, respectively,
because both $\Sigma_f-\delta_1-\beta-\delta_2$ and 
$\Sigma_f-\beta-\delta_1-\gamma_1$ are homeomorphic to $\Sigma_{f-3}^{6}$.
The composition of these two homeomorphisms is the required $\phi_1$.
The existence of $\phi_2$ and $\phi_3$ can be proven in a similar way because $\sigma_2=t_{\beta}^{-1}t_{\alpha_1}^{-1}t_{\alpha_3}^{-1}t_{\alpha_2}(\beta)$. Finally, we get the required Lefschetz fibration over $\Sigma_3$ with fiber $\Sigma_f$ whose monodromy factorization is given by $[t_{\delta_1}^{-1}t_{t_{\alpha_1}^{-1}(\sigma_1)}t_{\delta_2}^{-1},\phi_1]^{t_{\alpha_1}^{-2}}[t_{\delta_1}^{-1}t_{t_{\alpha_1}^{-1}(\sigma_1)}t_{\delta_2}^{-1},\phi_2]^{t_{\alpha_1}^{-4}}[t_{\delta_2}^{-1}t_{t_{\alpha_2}^{-1}(\sigma_2)}t_{\delta_3}^{-1},\phi_3]^{t_{\alpha_2}^{-2}t_{\alpha_1}^{-4}}t_{\alpha_1}^{4}t_{\alpha_2}^{2}=1$.

\end{proof}
\end{proposition}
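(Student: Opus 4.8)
The plan is to produce a factorization of the identity in $Mod(\Sigma_f)$ of the shape
$$C_1 C_2 C_3\, t_{\alpha_1}^4 t_{\alpha_2}^2 = 1,$$
where each $C_i$ is a conjugate of a single commutator. By the dictionary between such factorizations and Lefschetz fibrations recorded in Section~$2$, the commutator part yields a $\Sigma_f$-bundle over $\Sigma_3^1$ while the six Dehn twists $t_{\alpha_1}^4 t_{\alpha_2}^2$ supply six Lefschetz singular fibers, four with vanishing cycle $\alpha_1$ and two with vanishing cycle $\alpha_2$; closing up then gives the asserted fibration over $\Sigma_3$.

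Since any two nonseparating curves with connected complement are topologically equivalent, I may fix an embedding $\Sigma_1^3 \hookrightarrow \Sigma_f$ carrying a standard pair to $\{\alpha_1, \alpha_2\}$, as in Figure~$4$, and work with the concrete curves drawn there. The raw material is the star relation $E = t_{\delta_3}^{-1}t_{\delta_2}^{-1}t_{\delta_1}^{-1}(t_{\alpha_1}t_{\alpha_2}t_{\alpha_3}t_\beta)^3$ supported on this $\Sigma_1^3$, together with two lantern relations $L_1, L_2$ whose supports appear in Figure~$5$. All three are relations, hence equal the identity, so for any words $W_0, W_1, W_2$ the product $E\cdot L_1^{W_0}\cdot L_1^{W_1}\cdot L_2^{W_2}$ is again the identity as a group element; the content lies entirely in rewriting it as a word in Dehn twists.

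The central computation is to choose $W_0 = t_\beta(t_{\alpha_1}t_{\alpha_2}t_{\alpha_3}t_\beta)^2$, $W_1 = t_\beta t_{\alpha_1}t_{\alpha_2}t_{\alpha_3}t_\beta$, and $W_2 = t_\beta$, so that conjugating the two copies of $L_1$ and the copy of $L_2$ unwinds the three nested factors $(t_{\alpha_1}t_{\alpha_2}t_{\alpha_3}t_\beta)$ coming from the star relation. Expanding and then repeatedly applying commutativity and braid relations, the $\delta$-twists introduced by $E$ are meant to pair up with those supplied by the lanterns, and the surviving $t_{\alpha_1}$ and $t_{\alpha_2}$ factors can be collected to the front. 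I expect to reach an expression of the form
$$t_{\alpha_1}^2\{\,\cdots\}\,t_{\alpha_1}^2\{\,\cdots\}\,t_{\alpha_2}^2\{\,\cdots\},$$
in which each brace is a six-letter word $t_a^{-1}t_b t_c^{-1}t_d t_e^{-1}t_g$. Keeping exact track of the braid moves, so that no spurious conjugating factor is left dangling, is the step most prone to bookkeeping error.

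The main obstacle is the final recognition of each braced word as an honest commutator $[t_a^{-1}t_b t_c^{-1}, \phi_i]$. With the convention $[A,\phi]=A\phi A^{-1}\phi^{-1}$, this demands a homeomorphism $\phi_i$ carrying one ordered triple of curves to another. To produce $\phi_1$ I would first identify $\sigma_1$ as an explicit image of $\beta$, namely $\sigma_1 = t_\beta^{-1}t_{\alpha_2}^{-1}t_{\alpha_1}t_{\alpha_3}^{-1}(\beta)$, which turns $t_{t_{\alpha_1}^{-1}(\sigma_1)}$ into a twist along a concrete curve and lets me straighten the triple via the mapping class $t_{\alpha_3}t_{\alpha_1}^{-1}t_{\alpha_2}t_\beta t_{\alpha_1}$. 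It then remains to send $(\delta_1,\beta,\delta_2)$ to $(\beta,\delta_1,\gamma_1)$; since cutting $\Sigma_f$ along either triple produces a surface homeomorphic to $\Sigma_{f-3}^6$, the change-of-coordinates principle guarantees such a homeomorphism, and the composite is the required $\phi_1$. The curve $\sigma_2$ satisfies the analogous identity $\sigma_2 = t_\beta^{-1}t_{\alpha_1}^{-1}t_{\alpha_3}^{-1}t_{\alpha_2}(\beta)$, so $\phi_2$ and $\phi_3$ are constructed in the same manner, completing the factorization and hence the fibration.
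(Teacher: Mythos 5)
Your proposal follows the paper's own proof essentially step for step: the same star relation $E$ and lantern relations $L_1,L_2$, the same conjugating words $W_0,W_1,W_2$, the same rewriting into $t_{\alpha_1}^{2}\{\cdots\}t_{\alpha_1}^{2}\{\cdots\}t_{\alpha_2}^{2}\{\cdots\}$, and the identical construction of $\phi_1,\phi_2,\phi_3$ via the identities $\sigma_1=t_{\beta}^{-1}t_{\alpha_2}^{-1}t_{\alpha_1}t_{\alpha_3}^{-1}(\beta)$, $\sigma_2=t_{\beta}^{-1}t_{\alpha_1}^{-1}t_{\alpha_3}^{-1}t_{\alpha_2}(\beta)$ and the change-of-coordinates argument with both complements homeomorphic to $\Sigma_{f-3}^{6}$. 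The only difference is that you leave the middle expansion as an announced expectation rather than writing it out, but the approach and all key ingredients coincide with the paper's.
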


%
%

\definecolor{qqqqff}{rgb}{0.0,0.0,1.0}
\begin{figure}
\begin{tikzpicture}[line cap=round,line join=round,>=triangle 45,x=1.0cm,y=1.0cm,scale=0.6]
\clip(-4.320000000000002,-2.9597399999999965) rectangle (9.972000000000003,4.169879999999999);
\draw (0.41279284155557255,3.7017582073377846)-- (5.352792841555573,3.7017582073377846);
\draw (0.45455511074884675,0.5617582073377856)-- (5.394555110748847,0.5617582073377856);
\draw [shift={(0.4427928415555726,2.1417582073377854)}] plot[domain=-1.5490606199531038:1.5925320336366893,variable=\t]({1.0*0.46010868281309364*cos(\t r)+-0.0*0.46010868281309364*sin(\t r)},{0.0*0.46010868281309364*cos(\t r)+1.0*0.46010868281309364*sin(\t r)});
\draw [shift={(5.3842912663163505,2.143251523184772)}] plot[domain=1.5640054032352477:4.705598056825041,variable=\t]({1.0*0.46010868281309547*cos(\t r)+-0.0*0.46010868281309547*sin(\t r)},{0.0*0.46010868281309547*cos(\t r)+1.0*0.46010868281309547*sin(\t r)});
\draw [shift={(1.3127928415555725,3.161758207337786)},color=qqqqff]  plot[domain=2.60117315331921:3.7083218711133,variable=\t]({1.0*1.0495713410721532*cos(\t r)+-0.0*1.0495713410721532*sin(\t r)},{0.0*1.0495713410721532*cos(\t r)+1.0*1.0495713410721532*sin(\t r)});
\draw [shift={(-0.44829588599976106,3.138708575293169)},dash pattern=on 2pt off 2pt,color=qqqqff]  plot[domain=-0.561216156696057:0.5459325610980335,variable=\t]({1.0*1.0495713410721543*cos(\t r)+-0.0*1.0495713410721543*sin(\t r)},{0.0*1.0495713410721543*cos(\t r)+1.0*1.0495713410721543*sin(\t r)});
\draw [shift={(6.253673976152209,3.142639341934423)},dash pattern=on 2pt off 2pt,color=qqqqff]  plot[domain=2.6011731533192086:3.7083218711133,variable=\t]({1.0*1.0495713410721528*cos(\t r)+-0.0*1.0495713410721528*sin(\t r)},{0.0*1.0495713410721528*cos(\t r)+1.0*1.0495713410721528*sin(\t r)});
\draw [shift={(4.472792841555573,3.163520476531059)},color=qqqqff]  plot[domain=-0.561216156696057:0.5459325610980331,variable=\t]({1.0*1.049571341072155*cos(\t r)+-0.0*1.049571341072155*sin(\t r)},{0.0*1.049571341072155*cos(\t r)+1.0*1.049571341072155*sin(\t r)});
\draw [shift={(4.500906922895644,1.1256783276339943)},color=qqqqff]  plot[domain=-0.561216156696057:0.5459325610980341,variable=\t]({1.0*1.049571341072155*cos(\t r)+-0.0*1.049571341072155*sin(\t r)},{0.0*1.049571341072155*cos(\t r)+1.0*1.049571341072155*sin(\t r)});
\draw [shift={(6.2752423550794605,1.1130760584407202)},dash pattern=on 2pt off 2pt,color=qqqqff]  plot[domain=2.601173153319209:3.708321871113299,variable=\t]({1.0*1.0495713410721543*cos(\t r)+-0.0*1.0495713410721543*sin(\t r)},{0.0*1.0495713410721543*cos(\t r)+1.0*1.0495713410721543*sin(\t r)});
\draw [shift={(1.3292945902730817,1.1256783276339943)},color=qqqqff]  plot[domain=2.601173153319209:3.7083218711132995,variable=\t]({1.0*1.0495713410721543*cos(\t r)+-0.0*1.0495713410721543*sin(\t r)},{0.0*1.0495713410721543*cos(\t r)+1.0*1.0495713410721543*sin(\t r)});
\draw [shift={(-0.4456018504410802,1.1256783276339943)},dash pattern=on 2pt off 2pt,color=qqqqff]  plot[domain=-0.561216156696057:0.5459325610980341,variable=\t]({1.0*1.0495713410721546*cos(\t r)+-0.0*1.0495713410721546*sin(\t r)},{0.0*1.0495713410721546*cos(\t r)+1.0*1.0495713410721546*sin(\t r)});
\draw [shift={(2.8545790141360903,1.3127676749329167)}] plot[domain=0.9943084968661196:2.124927869156937,variable=\t]({1.0*1.022470081022615*cos(\t r)+-0.0*1.022470081022615*sin(\t r)},{0.0*1.022470081022615*cos(\t r)+1.0*1.022470081022615*sin(\t r)});
\draw [shift={(2.877780274799021,2.6094861333840877)}] plot[domain=3.5264019301356546:5.873776451165018,variable=\t]({1.0*0.7008914603952072*cos(\t r)+-0.0*0.7008914603952072*sin(\t r)},{0.0*0.7008914603952072*cos(\t r)+1.0*0.7008914603952072*sin(\t r)});
\draw [rotate around={0.0:(2.866179644467555,2.177246647233697)},color=qqqqff] (2.866179644467555,2.177246647233697) ellipse (1.0592596337921492cm and 0.7882124050994147cm);
\draw [shift={(1.624912199000789,3.3987930211369735)},color=qqqqff]  plot[domain=4.192395457678181:5.240207521625644,variable=\t]({1.0*1.494221008000354*cos(\t r)+-0.0*1.494221008000354*sin(\t r)},{0.0*1.494221008000354*cos(\t r)+1.0*1.494221008000354*sin(\t r)});
\draw [shift={(1.624912199000789,-0.1718809948879889)},dash pattern=on 2pt off 2pt,color=qqqqff]  plot[domain=1.2532429248175205:1.8750988121290637,variable=\t]({1.0*2.3860469745561397*cos(\t r)+-0.0*2.3860469745561397*sin(\t r)},{0.0*2.3860469745561397*cos(\t r)+1.0*2.3860469745561397*sin(\t r)});
\draw [shift={(4.153849611260182,3.417586042273947)},color=qqqqff]  plot[domain=4.192395457678181:5.240207521625644,variable=\t]({1.0*1.4942210080003535*cos(\t r)+-0.0*1.4942210080003535*sin(\t r)},{0.0*1.4942210080003535*cos(\t r)+1.0*1.4942210080003535*sin(\t r)});
\draw [shift={(4.1306483505972516,-0.13429495261404178)},dash pattern=on 2pt off 2pt,color=qqqqff]  plot[domain=1.2532429248175196:1.8750988121290637,variable=\t]({1.0*2.3860469745561366*cos(\t r)+-0.0*2.3860469745561366*sin(\t r)},{0.0*2.3860469745561366*cos(\t r)+1.0*2.3860469745561366*sin(\t r)});
\draw [shift={(1.7177172416525102,1.2563886115219964)},color=qqqqff]  plot[domain=-0.507348192277286:0.4826582821063363,variable=\t]({1.0*1.3537935596819786*cos(\t r)+-0.0*1.3537935596819786*sin(\t r)},{0.0*1.3537935596819786*cos(\t r)+1.0*1.3537935596819786*sin(\t r)});
\draw [shift={(3.991440786619672,1.2000095481110757)},dash pattern=on 2pt off 2pt,color=qqqqff]  plot[domain=2.5937824158288696:3.672065024925609,variable=\t]({1.0*1.2946773687218243*cos(\t r)+-0.0*1.2946773687218243*sin(\t r)},{0.0*1.2946773687218243*cos(\t r)+1.0*1.2946773687218243*sin(\t r)});
\draw [shift={(1.6945159809895796,3.0417256195344775)},color=qqqqff]  plot[domain=-0.507348192277286:0.4826582821063372,variable=\t]({1.0*1.3537935596819777*cos(\t r)+-0.0*1.3537935596819777*sin(\t r)},{0.0*1.3537935596819777*cos(\t r)+1.0*1.3537935596819777*sin(\t r)});
\draw [shift={(3.9682395259567405,3.022932598397504)},dash pattern=on 2pt off 2pt,color=qqqqff]  plot[domain=2.59378241582887:3.6988078867721947,variable=\t]({1.0*1.2946773687218238*cos(\t r)+-0.0*1.2946773687218238*sin(\t r)},{0.0*1.2946773687218238*cos(\t r)+1.0*1.2946773687218238*sin(\t r)});
\draw [color=qqqqff](-0.64200000000000114,3.499199999999999) node[anchor=north west] {$\mathbf{\delta_1}$};
\draw [color=qqqqff](-0.6060000000000011,1.5017400000000007) node[anchor=north west] {$\mathbf{\delta_2}$};
\draw [color=qqqqff](5.670000000000001,1.4871600000000007) node[anchor=north west] {$\mathbf{\delta_3}$};
\draw [color=qqqqff](5.598000000000001,3.499199999999999) node[anchor=north west] {$\mathbf{\delta_4}$};
\draw [color=qqqqff](1.0259999999999994,2.8576799999999998) node[anchor=north west] {$\mathbf{\alpha_1}$};
\draw [color=qqqqff](1.7259999999999995,1.4434200000000008) node[anchor=north west] {$\mathbf{\alpha_2}$};
\draw [color=qqqqff](4.104,1.9245600000000005) node[anchor=north west] {$\mathbf{\alpha_3}$};
\draw [color=qqqqff](3.096,3.644999999999999) node[anchor=north west] {$\mathbf{\alpha_4}$};
\draw [color=qqqqff](3.7800000000000002,3.1638599999999997) node[anchor=north west] {$\mathbf{\beta}$};
\draw [shift={(0.45591738043401725,2.1418562808577675)}] plot[domain=1.5640054032352477:4.705598056825041,variable=\t]({1.0*0.46010868281309547*cos(\t r)+-0.0*0.46010868281309547*sin(\t r)},{0.0*0.46010868281309547*cos(\t r)+1.0*0.46010868281309547*sin(\t r)});
\draw [shift={(5.397415805194795,2.143349596704754)}] plot[domain=-1.549060619953103:1.5925320336366897,variable=\t]({1.0*0.4601086828130943*cos(\t r)+-0.0*0.4601086828130943*sin(\t r)},{0.0*0.4601086828130943*cos(\t r)+1.0*0.4601086828130943*sin(\t r)});
\draw [shift={(0.288,0.61236)}] plot[domain=1.535704542048115:4.687199046393073,variable=\t]({1.0*3.07827515085965*cos(\t r)+-0.0*3.07827515085965*sin(\t r)},{0.0*3.07827515085965*cos(\t r)+1.0*3.07827515085965*sin(\t r)});
\draw [shift={(5.652,0.62694)}] plot[domain=-1.5825791835642757:1.6557554254115578,variable=\t]({1.0*3.0782751508596524*cos(\t r)+-0.0*3.0782751508596524*sin(\t r)},{0.0*3.0782751508596524*cos(\t r)+1.0*3.0782751508596524*sin(\t r)});
\draw [shift={(0.4351245388784445,0.09559807351998309)}] plot[domain=1.5640054032352482:4.705598056825041,variable=\t]({1.0*0.4601086828130966*cos(\t r)+-0.0*0.4601086828130966*sin(\t r)},{0.0*0.4601086828130966*cos(\t r)+1.0*0.4601086828130966*sin(\t r)});
\draw [shift={(5.404555110748846,0.10175820733778485)}] plot[domain=-1.5490606199531065:1.5925320336366868,variable=\t]({1.0*0.4601086828130943*cos(\t r)+-0.0*0.4601086828130943*sin(\t r)},{0.0*0.4601086828130943*cos(\t r)+1.0*0.4601086828130943*sin(\t r)});
\draw (0.432,-0.3645)-- (5.382,-0.3645);
\draw (0.19798306996459497,-2.4645987024701848)-- (5.615729964076996,-2.4511214660682397);
\draw [shift={(1.782,-0.96228)}] plot[domain=3.5124839424024508:5.608444364956038,variable=\t]({1.0*0.4187145347604975*cos(\t r)+-0.0*0.4187145347604975*sin(\t r)},{0.0*0.4187145347604975*cos(\t r)+1.0*0.4187145347604975*sin(\t r)});
\draw [shift={(1.734959349593495,-1.5618462872628707)}] plot[domain=0.748378047523519:2.2367655641740054,variable=\t]({1.0*0.4141999604236928*cos(\t r)+-0.0*0.4141999604236928*sin(\t r)},{0.0*0.4141999604236928*cos(\t r)+1.0*0.4141999604236928*sin(\t r)});
\draw [shift={(3.618,-1.00602)}] plot[domain=3.5124839424024503:5.608444364956041,variable=\t]({1.0*0.41871453476049647*cos(\t r)+-0.0*0.41871453476049647*sin(\t r)},{0.0*0.41871453476049647*cos(\t r)+1.0*0.41871453476049647*sin(\t r)});
\draw [shift={(3.570959349593496,-1.6055862872628701)}] plot[domain=0.7483780475235203:2.2367655641740063,variable=\t]({1.0*0.41419996042369217*cos(\t r)+-0.0*0.41419996042369217*sin(\t r)},{0.0*0.41419996042369217*cos(\t r)+1.0*0.41419996042369217*sin(\t r)});
\draw (2.176,-0.9185399999999977) node[anchor=north west] {$\mathbf{\ldots}$};
\draw (1.928,-1.5204399999999973) node[anchor=north west] {$f-4$};
\draw (2.004,-1.1393799999999977) node[anchor=north west] {$\underbrace{}$};
\end{tikzpicture}

\caption{Support of a four-holed torus relation embedded in a genus $4$ surface}

%
%

\definecolor{ffqqqq}{rgb}{1.0,0.0,0.0}
\definecolor{ffxfqq}{rgb}{1.0,0.4980392156862745,0.0}
\begin{tikzpicture}[line cap=round,line join=round,>=triangle 45,x=1.0cm,y=1.0cm,scale=0.75]
\clip(-1.777777777777777,-0.016199999999997234) rectangle (11.488888888888903,4.746600000000002);
\draw (-0.12145119256580517,0.658646363309082)-- (4.818548807434194,0.658646363309082);
\draw [shift={(3.924900619580992,1.2413595047422636)}] plot[domain=-0.5612161566960561:0.5459325610980339,variable=\t]({1.0*1.0495713410721554*cos(\t r)+-0.0*1.0495713410721554*sin(\t r)},{0.0*1.0495713410721554*cos(\t r)+1.0*1.0495713410721554*sin(\t r)});
\draw [shift={(5.704437312427739,1.218390256685963)},dash pattern=on 1pt off 1pt]  plot[domain=2.60117315331921:3.7083218711132995,variable=\t]({1.0*1.0495713410721559*cos(\t r)+-0.0*1.0495713410721559*sin(\t r)},{0.0*1.0495713410721559*cos(\t r)+1.0*1.0495713410721559*sin(\t r)});
\draw [shift={(0.7764895476213607,1.2601525258792372)}] plot[domain=2.60117315331921:3.708321871113299,variable=\t]({1.0*1.049571341072155*cos(\t r)+-0.0*1.049571341072155*sin(\t r)},{0.0*1.049571341072155*cos(\t r)+1.0*1.049571341072155*sin(\t r)});
\draw [shift={(-0.9984068930928022,1.2601525258792372)},dash pattern=on 1pt off 1pt]  plot[domain=-0.5612161566960552:0.5459325610980335,variable=\t]({1.0*1.0495713410721565*cos(\t r)+-0.0*1.0495713410721565*sin(\t r)},{0.0*1.0495713410721565*cos(\t r)+1.0*1.0495713410721565*sin(\t r)});
\draw [shift={(2.324975232147299,2.7063742893553835)}] plot[domain=3.8855943614377084:5.5814743548691155,variable=\t]({1.0*0.6937380625223298*cos(\t r)+-0.0*0.6937380625223298*sin(\t r)},{0.0*0.6937380625223298*cos(\t r)+1.0*0.6937380625223298*sin(\t r)});
\draw [shift={(1.0721071563490672,3.5332672193822168)},dash pattern=on 1pt off 1pt]  plot[domain=4.192395457678183:5.240207521625644,variable=\t]({1.0*1.4942210080003524*cos(\t r)+-0.0*1.4942210080003524*sin(\t r)},{0.0*1.4942210080003524*cos(\t r)+1.0*1.4942210080003524*sin(\t r)});
\draw [shift={(1.0721071563490672,-0.03740679664274582)}] plot[domain=1.2532429248175203:1.8750988121290628,variable=\t]({1.0*2.3860469745561392*cos(\t r)+-0.0*2.3860469745561392*sin(\t r)},{0.0*2.3860469745561392*cos(\t r)+1.0*2.3860469745561392*sin(\t r)});
\draw [shift={(3.6010445686084616,3.5332672193822168)},dash pattern=on 1pt off 1pt]  plot[domain=4.19239545767818:5.240207521625644,variable=\t]({1.0*1.4942210080003553*cos(\t r)+-0.0*1.4942210080003553*sin(\t r)},{0.0*1.4942210080003553*cos(\t r)+1.0*1.4942210080003553*sin(\t r)});
\draw [shift={(3.5778433079455283,1.79245631201308E-4)}] plot[domain=1.2532429248175196:1.875098812129064,variable=\t]({1.0*2.3860469745561357*cos(\t r)+-0.0*2.3860469745561357*sin(\t r)},{0.0*2.3860469745561357*cos(\t r)+1.0*2.3860469745561357*sin(\t r)});
\draw (-0.9991111111111094,1.7496000000000023) node[anchor=north west] {$\mathbf{\delta_2}$};
\draw (5.061111111111119,1.6362000000000023) node[anchor=north west] {$\mathbf{\delta_3}$};
\draw (0.5800000000000031,3.013200000000002) node[anchor=north west] {$\mathbf{\alpha_1}$};
\draw (3.3777777777777835,3.029400000000002) node[anchor=north west] {$\mathbf{\alpha_3}$};
\draw [shift={(-0.18076091944916384,2.3493068877528875)}] plot[domain=4.797326744086996:6.055760546417952,variable=\t]({1.0*0.5469694617899861*cos(\t r)+-0.0*0.5469694617899861*sin(\t r)},{0.0*0.5469694617899861*cos(\t r)+1.0*0.5469694617899861*sin(\t r)});
\draw [shift={(5.016321469047206,2.4808580357117025)}] plot[domain=3.4302948870426033:4.460643866635931,variable=\t]({1.0*0.7260875577864809*cos(\t r)+-0.0*0.7260875577864809*sin(\t r)},{0.0*0.7260875577864809*cos(\t r)+1.0*0.7260875577864809*sin(\t r)});
\draw [shift={(2.2785727108214395,5.9199809037778515)},color=ffxfqq]  plot[domain=4.229640704556637:5.222349592383171,variable=\t]({1.0*4.498158937594167*cos(\t r)+-0.0*4.498158937594167*sin(\t r)},{0.0*4.498158937594167*cos(\t r)+1.0*4.498158937594167*sin(\t r)});
\draw [shift={(2.25537145015851,7.761696975201254)},dash pattern=on 1pt off 1pt,color=ffxfqq]  plot[domain=4.367202708923405:5.069796374239094,variable=\t]({1.0*6.171059260380677*cos(\t r)+-0.0*6.171059260380677*sin(\t r)},{0.0*6.171059260380677*cos(\t r)+1.0*6.171059260380677*sin(\t r)});
\draw [color=ffxfqq](2.5488888888888894,2.203200000000002) node[anchor=north west] {$\mathbf{\gamma_2}$};
\draw [shift={(1.0489058956861375,4.040678790080502)},dash pattern=on 1pt off 1pt,color=ffqqqq]  plot[domain=4.342263909709158:5.142591320835892,variable=\t]({1.0*2.1165988239656426*cos(\t r)+-0.0*2.1165988239656426*sin(\t r)},{0.0*2.1165988239656426*cos(\t r)+1.0*2.1165988239656426*sin(\t r)});
\draw [shift={(6.408397108823017,1.2405186406714517)},dash pattern=on 1pt off 1pt,color=ffqqqq]  plot[domain=2.8021793342196206:3.411432168279142,variable=\t]({1.0*1.9191876947350417*cos(\t r)+-0.0*1.9191876947350417*sin(\t r)},{0.0*1.9191876947350417*cos(\t r)+1.0*1.9191876947350417*sin(\t r)});
\draw [shift={(3.322629440653298,2.5372370991226214)},color=ffqqqq]  plot[domain=3.4542650063588063:5.7412055835026665,variable=\t]({1.0*1.4445679510180998*cos(\t r)+-0.0*1.4445679510180998*sin(\t r)},{0.0*1.4445679510180998*cos(\t r)+1.0*1.4445679510180998*sin(\t r)});
\draw [shift={(3.6666666666666723,6.3018)},color=ffqqqq]  plot[domain=4.036069659596782:4.521828380962042,variable=\t]({1.0*5.4054284068883565*cos(\t r)+-0.0*5.4054284068883565*sin(\t r)},{0.0*5.4054284068883565*cos(\t r)+1.0*5.4054284068883565*sin(\t r)});
\draw [color=ffqqqq](3.6666666666666727,1.2146000000000025) node[anchor=north west] {$\mathbf{\sigma_2}$};
\draw [shift={(9.324450648889117,2.2637510676340726)}] plot[domain=1.5640054032352497:4.705598056825043,variable=\t]({1.0*0.460108682813095*cos(\t r)+-0.0*0.460108682813095*sin(\t r)},{0.0*0.460108682813095*cos(\t r)+1.0*0.460108682813095*sin(\t r)});
\draw [shift={(10.193833358724973,3.2631388863837225)},dash pattern=on 1pt off 1pt]  plot[domain=2.6011731533192077:3.7083218711133004,variable=\t]({1.0*1.0495713410721532*cos(\t r)+-0.0*1.0495713410721532*sin(\t r)},{0.0*1.0495713410721532*cos(\t r)+1.0*1.0495713410721532*sin(\t r)});
\draw [shift={(8.41295222412834,3.2840200209803596)}] plot[domain=-0.5612161566960596:0.5459325610980338,variable=\t]({1.0*1.0495713410721534*cos(\t r)+-0.0*1.0495713410721534*sin(\t r)},{0.0*1.0495713410721534*cos(\t r)+1.0*1.0495713410721534*sin(\t r)});
\draw [shift={(8.44106630546841,1.2649708932202688)}] plot[domain=-0.5612161566960578:0.5459325610980335,variable=\t]({1.0*1.0495713410721526*cos(\t r)+-0.0*1.0495713410721526*sin(\t r)},{0.0*1.0495713410721526*cos(\t r)+1.0*1.0495713410721526*sin(\t r)});
\draw [shift={(10.220602998315151,1.2607946663009417)},dash pattern=on 1pt off 1pt]  plot[domain=2.6011731533192073:3.7083218711133012,variable=\t]({1.0*1.0495713410721492*cos(\t r)+-0.0*1.0495713410721492*sin(\t r)},{0.0*1.0495713410721492*cos(\t r)+1.0*1.0495713410721492*sin(\t r)});
\draw [shift={(5.657876624225276,1.3768881559712978)},dash pattern=on 1pt off 1pt]  plot[domain=-0.5073481922772851:0.48265828210633505,variable=\t]({1.0*1.3537935596819792*cos(\t r)+-0.0*1.3537935596819792*sin(\t r)},{0.0*1.3537935596819792*cos(\t r)+1.0*1.3537935596819792*sin(\t r)});
\draw [shift={(7.978002690518298,1.320509092560377)}] plot[domain=2.59378241582887:3.6720650249256086,variable=\t]({1.0*1.2946773687218227*cos(\t r)+-0.0*1.2946773687218227*sin(\t r)},{0.0*1.2946773687218227*cos(\t r)+1.0*1.2946773687218227*sin(\t r)});
\draw [shift={(5.634675363562344,3.162225163983778)},dash pattern=on 1pt off 1pt]  plot[domain=-0.5073481922772869:0.48265828210633643,variable=\t]({1.0*1.3537935596819781*cos(\t r)+-0.0*1.3537935596819781*sin(\t r)},{0.0*1.3537935596819781*cos(\t r)+1.0*1.3537935596819781*sin(\t r)});
\draw [shift={(7.931600169192435,3.1434321428468044)}] plot[domain=2.5937824158288696:3.6988078867721943,variable=\t]({1.0*1.2946773687218232*cos(\t r)+-0.0*1.2946773687218232*sin(\t r)},{0.0*1.2946773687218232*cos(\t r)+1.0*1.2946773687218232*sin(\t r)});
\draw (9.600000000000012,1.6200000000000023) node[anchor=north west] {$\mathbf{\delta_3}$};
\draw (9.555555555555568,3.677400000000002) node[anchor=north west] {$\mathbf{\delta_4}$};
\draw (5.933333333333342,1.7658000000000023) node[anchor=north west] {$\mathbf{\alpha_2}$};
\draw (5.888888888888897,3.580200000000002) node[anchor=north west] {$\mathbf{\alpha_4}$};
\draw (6.794738396708853,3.819980903777851)-- (9.300474548305317,3.8387739249148254);
\draw (6.841140918034718,0.7191324161772257)-- (9.34687706963118,0.7379254373141992);
\draw [shift={(6.933945960686433,2.241367128272077)}] plot[domain=-1.8483432573008436:1.9277035783602987,variable=\t]({1.0*0.2540309197932255*cos(\t r)+-0.0*0.2540309197932255*sin(\t r)},{0.0*0.2540309197932255*cos(\t r)+1.0*0.2540309197932255*sin(\t r)});
\draw [shift={(3.848178292516714,2.241367128272077)},color=ffxfqq]  plot[domain=-0.3720148408836117:0.39208687853493135,variable=\t]({1.0*4.084392002564145*cos(\t r)+-0.0*4.084392002564145*sin(\t r)},{0.0*4.084392002564145*cos(\t r)+1.0*4.084392002564145*sin(\t r)});
\draw [shift={(0.5071967570547627,2.241367128272077)},dash pattern=on 1pt off 1pt,color=ffxfqq]  plot[domain=-0.20802148819707966:0.21655030497608976,variable=\t]({1.0*7.279727460594187*cos(\t r)+-0.0*7.279727460594187*sin(\t r)},{0.0*7.279727460594187*cos(\t r)+1.0*7.279727460594187*sin(\t r)});
\draw [color=ffxfqq](7.9555555555555655,1.8792000000000022) node[anchor=north west] {$\mathbf{\gamma_3}$};
\draw [color=ffqqqq](8.00000000000001,3.8394000000000017) node[anchor=north west] {$\mathbf{\sigma_3}$};
\draw [shift={(5.472266538921826,1.3393021136973506)},dash pattern=on 1pt off 1pt,color=ffqqqq]  plot[domain=-0.3600665858659511:0.41196859609257785,variable=\t]({1.0*1.760162825939318*cos(\t r)+-0.0*1.760162825939318*sin(\t r)},{0.0*1.760162825939318*cos(\t r)+1.0*1.760162825939318*sin(\t r)});
\draw [shift={(10.135719932170794,3.237397248531672)},dash pattern=on 1pt off 1pt,color=ffqqqq]  plot[domain=2.7192422347009915:3.645578609220703,variable=\t]({1.0*1.317376918499629*cos(\t r)+-0.0*1.317376918499629*sin(\t r)},{0.0*1.317376918499629*cos(\t r)+1.0*1.317376918499629*sin(\t r)});
\draw [shift={(6.5395245294166156,4.440150601297978)},color=ffqqqq]  plot[domain=4.9489988768226185:6.029566738832546,variable=\t]({1.0*2.474449245909201*cos(\t r)+-0.0*2.474449245909201*sin(\t r)},{0.0*2.474449245909201*cos(\t r)+1.0*2.474449245909201*sin(\t r)});
\draw [shift={(9.30047454830531,0.4748231413965702)},color=ffqqqq]  plot[domain=1.7118826245131753:3.0199323189397465,variable=\t]({1.0*2.1449237224564635*cos(\t r)+-0.0*2.1449237224564635*sin(\t r)},{0.0*2.1449237224564635*cos(\t r)+1.0*2.1449237224564635*sin(\t r)});
\draw [shift={(4.266666666666673,5.1678)},color=ffqqqq]  plot[domain=4.333614983460458:4.777059031853636,variable=\t]({1.0*4.49774718210422*cos(\t r)+-0.0*4.49774718210422*sin(\t r)},{0.0*4.49774718210422*cos(\t r)+1.0*4.49774718210422*sin(\t r)});
\end{tikzpicture}
\caption{Supports of two lantern relations}
\end{figure}
%
%

\begin{proposition}
Let $f\geq4$ and let $\{\alpha_2,\alpha_3\}$ be any pair of nonseparating simple closed curves 
on $\Sigma_f$ such that $\Sigma_f-\alpha_2-\alpha_3$ is connected.
Then there is a genus $f$ Lefschetz fibration $Z$ over $\Sigma_4$ which has four singular fibers, two of which have monodromy $t_{\alpha_2}$ and another two of which have monodromy $t_{\alpha_3}$.
\begin{proof}
We use the $4$-holed torus relation \cite{KO:2008} and lantern relations. Let $E_2:=t_{\delta_4}^{-1}t_{\delta_3}^{-1}t_{\delta_2}^{-1}t_{\delta_1}^{-1}t_{\alpha_1}t_{\alpha_3}t_{\beta}
t_{\alpha_2}t_{\alpha_4}t_{\beta}t_{\alpha_1}t_{\alpha_3}t_{\beta}t_{\alpha_2}t_{\alpha_4}t_{\beta}$.
We embed the support of this relation into $\Sigma_f$, as shown in Figure $6$. Let $L_5:=t_{\alpha_3}^{-1}t_{\alpha_1}^{-1}t_{\delta_2}^{-1}t_{\delta_3}^{-1}t_{\sigma_2}t_{\alpha_2}t_{\gamma_2}$
and $L_6:=t_{\alpha_4}^{-1}t_{\alpha_2}^{-1}t_{\delta_3}^{-1}t_{\delta_4}^{-1}t_{\sigma_3}t_{\alpha_3}t_{\gamma_3}$.
For the supports of lanterns, see Figure $7$.
Let $w_1:=t_{\beta}t_{\alpha_2}t_{\alpha_4}t_{\beta}t_{\alpha_1}t_{\alpha_3}t_{\beta}t_{\alpha_2}t_{\alpha_4}t_{\beta}$,
$w_2:=t_{\beta}t_{\alpha_1}t_{\alpha_3}t_{\beta}t_{\alpha_2}t_{\alpha_4}t_{\beta}$, and 
$w_3:=t_{\beta}t_{\alpha_2}t_{\alpha_4}t_{\beta}$. Then, from commutativity relations and braid relations, 
\begin{eqnarray*}
1&=&E_2\cdot L_5^{w_1} \cdot L_6^{w_2} \cdot L_5^{w_3} \cdot L_6^{t_{\beta}}\\
&=&t_{\delta_4}^{-1}t_{\delta_3}^{-1}t_{\delta_2}^{-1}t_{\delta_1}^{-1}
(t_{\delta_2}^{-1}t_{\delta_3}^{-1}t_{\sigma_2}t_{\alpha_2}t_{\gamma_2}t_{\beta})(t_{\delta_3}^{-1}t_{\delta_4}^{-1}t_{\sigma_3}t_{\alpha_3}t_{\gamma_3}t_{\beta})\\
&&(t_{\delta_2}^{-1}t_{\delta_3}^{-1}t_{\sigma_2}t_{\alpha_2}t_{\gamma_2}t_{\beta})
(t_{\delta_3}^{-1}t_{\delta_4}^{-1}t_{\sigma_3}t_{\alpha_3}t_{\gamma_3}t_{\beta})\\
&=&(t_{\delta_2}^{-1}t_{\sigma_2}t_{\delta_3}^{-1}t_{\alpha_2}t_{\gamma_2}t_{\delta_1}^{-1}t_{\beta})(t_{\delta_3}^{-1}t_{\sigma_3}t_{\delta_4}^{-1}t_{\alpha_3}t_{\gamma_3}t_{\delta_4}^{-1}t_{\beta})\\
&&(t_{\delta_2}^{-1}t_{\sigma_2}t_{\delta_3}^{-1}t_{\alpha_2}t_{\gamma_2}t_{\delta_2}^{-1}t_{\beta})
(t_{\delta_3}^{-1}t_{\sigma_3}t_{\delta_4}^{-1}t_{\alpha_3}t_{\gamma_3}t_{\delta_3}^{-1}t_{\beta})\\
&=&t_{\alpha_2}(t_{\delta_2}^{-1}t_{t_{\alpha_2}^{-1}(\sigma_2)}t_{\delta_3}^{-1}t_{\gamma_2}t_{\delta_1}^{-1}t_{\beta})t_{\alpha_3}
(t_{\delta_3}^{-1}t_{t_{\alpha_3}^{-1}(\sigma_3)}t_{\delta_4}^{-1}t_{\gamma_3}t_{\delta_4}^{-1}t_{\beta})\\
& &t_{\alpha_2}(t_{\delta_2}^{-1}t_{t_{\alpha_2}^{-1}(\sigma_2)}t_{\delta_3}^{-1}t_{\gamma_2}t_{\delta_2}^{-1}t_{\beta})
t_{\alpha_3}(t_{\delta_3}^{-1}t_{t_{\alpha_3}^{-1}(\sigma_3)}t_{\delta_4}^{-1}t_{\gamma_3}t_{\delta_3}^{-1}t_{\beta})\\
&=&t_{\alpha_2}[t_{\delta_2}^{-1}t_{t_{\alpha_2}^{-1}(\sigma_2)}t_{\delta_3}^{-1},\phi_1]
t_{\alpha_3}[t_{\delta_3}^{-1}t_{t_{\alpha_3}^{-1}(\sigma_3)}t_{\delta_4}^{-1},\phi_2]\\
& &t_{\alpha_2}[t_{\delta_2}^{-1}t_{t_{\alpha_2}^{-1}(\sigma_2)}t_{\delta_3}^{-1},\phi_3]t_{\alpha_3}[t_{\delta_3}^{-1}t_{t_{\alpha_3}^{-1}(\sigma_3)}t_{\delta_4}^{-1},\phi_4]\\
&=&[t_{\delta_2}^{-1}t_{t_{\alpha_2}^{-1}(\sigma_2)}t_{\delta_3}^{-1},\phi_1]^{t_{\alpha_2}^{-1}}
[t_{\delta_3}^{-1}t_{t_{\alpha_3}^{-1}(\sigma_3)}t_{\delta_4}^{-1},\phi_2]^{t_{\alpha_3}^{-1}t_{\alpha_2}^{-1}}\\
& &[t_{\delta_2}^{-1}t_{t_{\alpha_2}^{-1}(\sigma_2)}t_{\delta_3}^{-1},\phi_3]^{t_{\alpha_3}^{-1}t_{\alpha_2}^{-2}}
[t_{\delta_3}^{-1}t_{t_{\alpha_3}^{-1}(\sigma_3)}t_{\delta_4}^{-1},\phi_4]^{t_{\alpha_3}^{-2}t_{\alpha_2}^{-2}}t_{\alpha_2}^2t_{\alpha_3}^2
\end{eqnarray*}
For the fifth equality, we need to find certain $\phi_1,\phi_2,\phi_3$ and $\phi_4$. For $\phi_1$, it is sufficient to verify that $\{\delta_2,t_{\alpha_2}^{-1}(\sigma_2),\delta_3\}$ is topologically equivalent to $\{\beta,\delta_1,\gamma_2\}$. This is because $\{\delta_2,t_{\alpha_2}^{-1}(\sigma_2),\delta_3\}$ is topologically equivalent to $\{\delta_2,\beta,\delta_3\}$, and then $\{\delta_2,\beta,\delta_3\}$ to $\{\beta,\delta_1,\gamma_2\}$. The arguments for $\phi_2,\phi_3,$ and $\phi_4$ are similar. For these, we can check that $\{\delta_3,t_{\alpha_3}^{-1}(\sigma_3),\delta_4\}$ is topologically equivalent to $\{\beta,\delta_4,\gamma_3\}$, $\{\delta_2,t_{\alpha_2}^{-1}(\sigma_2),\delta_3\}$ is topologically equivalent to $\{\beta,\delta_2,\gamma_2\}$, and $\{\delta_3,t_{\alpha_3}^{-1}(\sigma_3),\delta_4\}$ is topologically equivalent to $\{\beta,\delta_3,\gamma_3\}$. 

\end{proof}
\end{proposition}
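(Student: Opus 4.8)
The plan is to follow the strategy of the preceding proposition, replacing the star relation supported on $\Sigma_1^3$ by the four-holed torus relation $E_2$ supported on $\Sigma_1^4\hookrightarrow\Sigma_f$ (Figure $6$). The aim is to transform the trivial word $E_2=1$ into a monodromy factorization consisting of four commutators followed by the positive word $t_{\alpha_2}^2 t_{\alpha_3}^2$. Since a product of four commutators in $Mod(\Sigma_f)$ determines a $\Sigma_f$-bundle over $\Sigma_4^1$, and each positive Dehn twist contributes one Lefschetz singular fiber, the resulting closed factorization yields, by the correspondence recalled in Section $2$, a genus $f$ Lefschetz fibration over $\Sigma_4$ with exactly four singular fibers whose vanishing cycles are $\alpha_2,\alpha_2,\alpha_3,\alpha_3$, which is the conclusion sought.

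First I would fix the two lantern relations $L_5$ and $L_6$ embedded as in Figure $7$; each is chosen so that it carries precisely the boundary twists and surplus interior twists that must be removed from $E_2$, together with a single positive twist along $\alpha_2$ (respectively $\alpha_3$) and two auxiliary twists along $\sigma$ and $\gamma$. The mechanism for cancellation is to conjugate these lanterns by the partial products $w_1,w_2,w_3$ and $t_\beta$ built from the very factors of $E_2$, and then to multiply $E_2$ on the right by $L_5^{w_1},L_6^{w_2},L_5^{w_3},L_6^{t_\beta}$. Because each conjugating word is an initial segment of $E_2$, the boundary twists $t_{\delta_1}^{-1},\dots,t_{\delta_4}^{-1}$ together with the surplus twists along $\alpha_1$ and $\alpha_4$ are lined up next to their inverses; repeated application of the commutativity relations for disjoint curves and of the braid relation $f t_a f^{-1}=t_{f(a)}$ then telescopes the long word, collecting the positive twists into the block $t_{\alpha_2}^2 t_{\alpha_3}^2$ and leaving four residual factors of the shape $t_{\delta}^{-1}\,t_{t_\alpha^{-1}(\sigma)}\,t_{\delta'}^{-1}\,t_\gamma\,t_{\delta''}^{-1}\,t_\beta$.

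The heart of the argument, and the step demanding genuine care, is to recognize each residual factor as a commutator. Setting $M=t_\delta^{-1}\,t_{t_\alpha^{-1}(\sigma)}\,t_{\delta'}^{-1}$, I must produce a self-homeomorphism $\phi$ of $\Sigma_f$ with $M\cdot t_\gamma t_{\delta''}^{-1}t_\beta=[M,\phi]$, which reduces exactly to verifying that the ordered triple of curves underlying $M$ is topologically equivalent to the triple $\{\beta,\delta'',\gamma\}$ of the appropriate indices. I would establish each such equivalence in two stages, just as in the previous proposition: first compute $t_\alpha^{-1}(\sigma)$ explicitly---one checks $\sigma_2=t_\beta^{-1}t_{\alpha_1}^{-1}t_{\alpha_3}^{-1}t_{\alpha_2}(\beta)$ and analogously for $\sigma_3$---so that an explicit product of twists carries the triple to one built from the standard curves only; then invoke the change-of-coordinates principle, which supplies a homeomorphism realizing the desired correspondence of ordered triples, since the complement in $\Sigma_f$ of either triple is homeomorphic to the same surface $\Sigma_{f-3}^6$. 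Composing the two homeomorphisms produces each $\phi_i$. The four equivalences to be checked---namely of $\{\delta_2,t_{\alpha_2}^{-1}(\sigma_2),\delta_3\}$ and $\{\delta_3,t_{\alpha_3}^{-1}(\sigma_3),\delta_4\}$ with the respective triples $\{\beta,\delta_i,\gamma_j\}$---are precisely these topological-equivalence statements, and once they are in hand the factorization is complete and the trailing word $t_{\alpha_2}^2 t_{\alpha_3}^2$ records the four prescribed singular fibers.
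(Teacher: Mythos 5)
Your proposal is correct and follows essentially the same route as the paper's own proof: the same four-holed torus relation $E_2$ and lanterns $L_5,L_6$, the same conjugating words $w_1,w_2,w_3,t_\beta$ producing the telescoping cancellation, and the same recognition of each residual factor $t_{\delta}^{-1}t_{t_{\alpha}^{-1}(\sigma)}t_{\delta'}^{-1}t_{\gamma}t_{\delta''}^{-1}t_{\beta}$ as a commutator via topological equivalence of ordered triples of curves, verified in the same two stages (explicit computation of $t_\alpha^{-1}(\sigma)$, then the change-of-coordinates principle). One small correction: for $E_2\cdot L^{w}=E_2w^{-1}Lw$ to telescope, the conjugating words must be \emph{final} segments of the positive part of $E_2$ (as they are in the paper), not initial segments as you wrote, but this does not affect the argument.
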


%
%
\begin{proposition}
Let $f\geq 6$ and let $\beta$, $\gamma$ be simple closed curves 
on $\Sigma_f$ embedded, as shown in Figure $2$.
Then there is a genus $f$ Lefschetz fibration $W$ over $\Sigma_3$ which has two singular fibers, one of which has monodromy $t_{\beta}$ and another has monodromy $t_{\gamma}$.

\begin{proof}
There is a $9$-holed torus relation  $E_7:=t_{\delta_1}^{-1}t_{\delta_2}^{-1}\ldots t_{\delta_8}^{-1}t_{\gamma_9}^{-1}t_{\beta_8}t_{\sigma_3}t_{\sigma_6}t_{\alpha_{10}}t_{\beta_5}$
$t_{\sigma_4}t_{\sigma_7}t_{\alpha_6}t_{\beta_2}t_{\sigma_5}t_{\sigma_8}t_{\alpha_3}$ 
(see its support in orange in Figure $8$ and see Figure $9$ for its interior curves),
where we use the idenfication $(\alpha_1,\alpha_2,\alpha_3,\alpha_4,\alpha_5,\alpha_6,\alpha_7,\alpha_8,$ $\alpha_9)\rightarrow (\alpha_5,\alpha_6,\alpha_7,\alpha_8,\alpha_{10},\alpha_1,\alpha_2,\alpha_3,\alpha_4)$ to go from Figure $9$ in \cite{KO:2008} to Figure $9$ in this article. Here, each $\beta_i=t_{\alpha_i}(\beta)$ as in \cite{KO:2008}. If we combine this relation $E_7$ and one more lantern relation $L_8:=t_{\delta_9}^{-1}t_{\delta_{10}}^{-1}t_{\gamma_9}t_{\sigma_9}t_{\alpha_9}t_{\alpha_8}^{-1}t_{\alpha_{10}}^{-1}$ (see its support in blue in Figure $8$), 
then we get the following $10$-holed torus relation
$E_8:=t_{\delta_1}^{-1}t_{\delta_2}^{-1}\ldots t_{\delta_{10}}^{-1}t_{\alpha_8}^{-1}t_{\alpha_{10}}^{-1}
t_{\beta_8}t_{\sigma_3}t_{\sigma_6}t_{\alpha_{10}}t_{\beta_5}t_{\sigma_4}t_{\sigma_7}t_{\alpha_6}t_{\beta_2}t_{\sigma_5}t_{\sigma_8}t_{\alpha_3}t_{\sigma_9}t_{\alpha_9}$. 
Let $\beta_5'=(t_{\sigma_4}t_{\sigma_7}t_{\alpha_6}t_{\sigma_5}t_{\sigma_8}t_{\alpha_3}t_{\sigma_9}t_{\alpha_9})^{-1}(\beta_5)$ and 
$\beta_2'=(t_{\sigma_5}t_{\sigma_8}t_{\alpha_3}t_{\sigma_9}t_{\alpha_9})^{-1}(\beta_2)$. 
Then, by using commutativity relations and braid relations, 
$$1=t_{\delta_1}^{-1}t_{\delta_2}^{-1}\ldots t_{\delta_{10}}^{-1}
t_{\beta_8}t_{\sigma_3}t_{\sigma_6}t_{\alpha_{10}}t_{\sigma_4}t_{\sigma_7}t_{\alpha_6}t_{\sigma_5}t_{\sigma_8}t_{\alpha_3}t_{\sigma_9}t_{\alpha_9}
t_{\beta_5'}t_{\alpha_8}^{-1}t_{t_{\alpha_8}(\beta_2')}t_{\alpha_{10}}^{-1}$$
$$=t_{\delta_1}^{-1}t_{\delta_2}^{-1}\ldots t_{\delta_{10}}^{-1}
t_{\beta_8}t_{\sigma_3}t_{\sigma_4}t_{\sigma_5}t_{\sigma_6}t_{\sigma_7}t_{\sigma_8}
t_{\alpha_{10}}t_{\alpha_6}t_{\alpha_3}t_{\alpha_9}t_{t_{\alpha_9}^{-1}(\sigma_9)}
t_{\beta_5'}t_{\alpha_8}^{-1}t_{t_{\alpha_8}(\beta_2')}t_{\alpha_{10}}^{-1}$$
$$=t_{\delta_1}^{-1}t_{\delta_2}^{-1}\ldots t_{\delta_{10}}^{-1}
(t_{\sigma_3}t_{\sigma_4}t_{\sigma_5}t_{\sigma_6}t_{\sigma_7}t_{\sigma_8}
t_{\alpha_{10}}t_{\alpha_6}t_{\alpha_3}t_{\alpha_9})^{t_{\beta_8}^{-1}}t_{\beta_8}t_{t_{\alpha_9}^{-1}(\sigma_9)}
t_{\beta_5'}t_{\alpha_8}^{-1}t_{t_{\alpha_8}(\beta_2')}t_{\alpha_{10}}^{-1}$$   
$$=\{t_{\delta_1}^{-1}\cdot t_{t_{\beta_8}(\sigma_3)}\cdot t_{\delta_3}^{-1}\cdot t_{t_{\beta_8}(\sigma_4)}
\cdot t_{\delta_{10}}^{-1}\cdot t_{t_{\beta_8}(\sigma_5)}\cdot t_{\delta_2}^{-1}\cdot t_{t_{\beta_8}(\sigma_6)}\cdot t_{\delta_7}^{-1}
\cdot t_{t_{\beta_8}(\sigma_7)}\cdot t_{\delta_9}^{-1}\cdot t_{t_{\beta_8}(\sigma_8)}\}$$
$$\{t_{\delta_5}^{-1}\cdot t_{t_{\beta_8}(\alpha_3)}\cdot t_{\delta_8}^{-1}\cdot t_{t_{\beta_8}(\alpha_{10})}
\cdot t_{\delta_6}^{-1}\cdot t_{t_{\beta_8}(\alpha_6)}\cdot t_{\delta_4}^{-1}\cdot t_{t_{\beta_8}(\alpha_9)}\}
\{t_{\beta_8}\cdot t_{t_{\alpha_9}^{-1}(\sigma_9)}\}\{t_{\beta_5'}\cdot t_{\alpha_8}^{-1}\cdot t_{t_{\alpha_8}(\beta_2')}\cdot t_{\alpha_{10}}^{-1}\}$$
$$=[t_{\delta_1}^{-1}\cdot t_{t_{\beta_8}(\sigma_3)}\cdot t_{\delta_3}^{-1}\cdot t_{t_{\beta_8}(\sigma_4)}
\cdot t_{\delta_{10}}^{-1}\cdot t_{t_{\beta_8}(\sigma_5)},\phi_1][t_{\delta_5}^{-1}\cdot t_{t_{\beta_8}(\alpha_3)}\cdot t_{\delta_8}^{-1}\cdot t_{t_{\beta_8}(\alpha_{10})},\phi_2]$$
$$\cdot t_{\beta_8}\cdot t_{t_{\alpha_9}^{-1}(\sigma_9)}\cdot[t_{\beta_5'}t_{\alpha_8}^{-1},\phi_3]$$

For the last equality, we need to verify that $\{\delta_1,t_{\beta_8}(\sigma_3),\delta_3,t_{\beta_8}(\sigma_4),\delta_{10},t_{\beta_8}(\sigma_5)\}$ is topologically equivalent to 
$\{t_{\beta_8}(\sigma_8),\delta_9,t_{\beta_8}(\sigma_7),\delta_7,t_{\beta_8}(\sigma_6),\delta_2\}$. This follows from the fact that both $\Sigma_f-\delta_1-\delta_3-\delta_{10}-\sigma_3-\sigma_4-\sigma_5$ and 
$\Sigma_f-\delta_2-\delta_7-\delta_9-\sigma_6-\sigma_7-\sigma_8$ are connected. For $\phi_2$ and $\phi_3$, it is easy to check that $\Sigma_f-\delta_5-\alpha_3-\delta_8-\alpha_{10}\approx \Sigma_{f-4}^8\approx \Sigma_f-\alpha_9-\delta_4-\alpha_6-\delta_6$ and that $\{\beta_5',\alpha_8\}$ is topologically equivalent to $\{\beta,\alpha_8\}$ and $\{\alpha_{10},t_{\alpha_8}(\beta_2')\}$ is topologically equivalent to $\{\alpha_{10},\beta\}$. Finally, observe that $\{\beta_8,t_{\alpha_9}^{-1}(\sigma_9)\}$ is topologically equivalent to $\{\beta,t_{\alpha_9}^{-1}(\sigma_9)\}$ and $t_{\alpha_9}^{-1}(\sigma_9)=\gamma$.

\end{proof}

\end{proposition}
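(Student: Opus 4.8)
The plan is to realize the desired fibration through its monodromy factorization. By the correspondence in Section 2.3, a genus $f$ Lefschetz fibration over $\Sigma_3$ whose only two singular fibers carry the monodromies $t_\beta$ and $t_\gamma$ is the same datum as a relation $[A_1,B_1][A_2,B_2][A_3,B_3]\,t_\beta t_\gamma = 1$ in $Mod(\Sigma_f)$, in which the only positive Dehn twists occurring outside the three commutators are $t_\beta$ and $t_\gamma$. So the whole task reduces to producing such a relation, that is, to writing $t_\beta t_\gamma$ as a product of three commutators in a geometrically controlled way.

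First I would assemble a relation carrying a large supply of separating, boundary-parallel twists that can be absorbed into commutators. The natural source is a multi-holed torus relation: I would take the $9$-holed torus relation $E_7$ of \cite{KO:2008}, reindexing its curves to match Figure $2$ and Figure $9$, and then splice in one further lantern relation $L_8$ to obtain a $10$-holed torus relation $E_8$. The point of passing to ten holes is that the ten negative twists $t_{\delta_1}^{-1},\dots,t_{\delta_{10}}^{-1}$ can be grouped and paired so that each group, once interleaved with positive twists, collapses to a single commutator.

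Next I would normalize $E_8$ using only the two elementary families of relations available: the commutativity relations for disjoint curves and the braid relations in the form $f t_a f^{-1} = t_{f(a)}$. Applying these repeatedly, I would shuffle the positive twists so that the word assumes the interleaved shape $t_{\delta}^{-1} t_{(\cdot)} t_{\delta'}^{-1} t_{(\cdot)} \cdots$, with the ten $t_{\delta_i}^{-1}$ distributed into a block of six $\delta$'s interleaved with six $\sigma$-type positive twists, a block of four $\delta$'s interleaved with four $\alpha$-type twists, and a residual lantern block $t_{\beta_5'}t_{\alpha_8}^{-1}t_{t_{\alpha_8}(\beta_2')}t_{\alpha_{10}}^{-1}$, leaving only $t_{\beta_8}$ and $t_{t_{\alpha_9}^{-1}(\sigma_9)}$ uncancelled among the positive twists. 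The decisive algebraic fact I would use throughout is the commutator-extraction mechanism already exploited in the previous propositions: a concatenation of two alternating words of equal length, $t_{a_1}^{-1}t_{b_1}\cdots$ followed by $t_{a_1'}^{-1}t_{b_1'}\cdots$, equals a single commutator $[t_{a_1}^{-1}t_{b_1}\cdots,\phi]$ exactly when the ordered tuple of curves in the first half is topologically equivalent to that in the second half, since a homeomorphism $\phi$ realizing that equivalence supplies the required conjugation.

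The main obstacle, and the genuine content of the proof, is verifying these topological equivalences. For each of the three blocks I must exhibit a homeomorphism of $\Sigma_f$ carrying one ordered collection of curves to another, which I would reduce to showing that the two relevant complementary subsurfaces are homeomorphic, and hence to checking that the complements are connected of the correct genus. For the first commutator this means proving $\{\delta_1,t_{\beta_8}(\sigma_3),\delta_3,t_{\beta_8}(\sigma_4),\delta_{10},t_{\beta_8}(\sigma_5)\}$ is equivalent to $\{t_{\beta_8}(\sigma_8),\delta_9,t_{\beta_8}(\sigma_7),\delta_7,t_{\beta_8}(\sigma_6),\delta_2\}$, which follows from the connectivity of $\Sigma_f-\delta_1-\delta_3-\delta_{10}-\sigma_3-\sigma_4-\sigma_5$ and $\Sigma_f-\delta_2-\delta_7-\delta_9-\sigma_6-\sigma_7-\sigma_8$; the analogous complements for the $\alpha$-block are each $\Sigma_{f-4}^8$, and for the lantern block one checks $\{\beta_5',\alpha_8\}\sim\{\beta,\alpha_8\}$ together with $\{\alpha_{10},t_{\alpha_8}(\beta_2')\}\sim\{\alpha_{10},\beta\}$. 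These genus counts are precisely what force $f\ge 6$. Finally, noting $t_{\alpha_9}^{-1}(\sigma_9)=\gamma$ and applying one global conjugation that sends $\beta_8\mapsto\beta$ while fixing $\gamma$, available because $\{\beta_8,\gamma\}\sim\{\beta,\gamma\}$, turns the two leftover twists into exactly $t_\beta t_\gamma$ and completes the construction of $W$.
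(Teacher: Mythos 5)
Your proposal is correct and follows essentially the same route as the paper's own proof: you build the $10$-holed torus relation $E_8$ from the Korkmaz--Ozbagci $9$-holed torus relation $E_7$ and the lantern $L_8$, rearrange it by commutativity and braid relations into the same three blocks (six $\delta$'s with $\sigma$-twists, four $\delta$'s with $\alpha$-twists, and the residual lantern block), extract commutators via the same topological-equivalence checks, and finish with $t_{\alpha_9}^{-1}(\sigma_9)=\gamma$ and a global conjugation carrying $\{\beta_8,\gamma\}$ to $\{\beta,\gamma\}$. No substantive difference from the paper's argument.
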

%
%
\begin{figure}
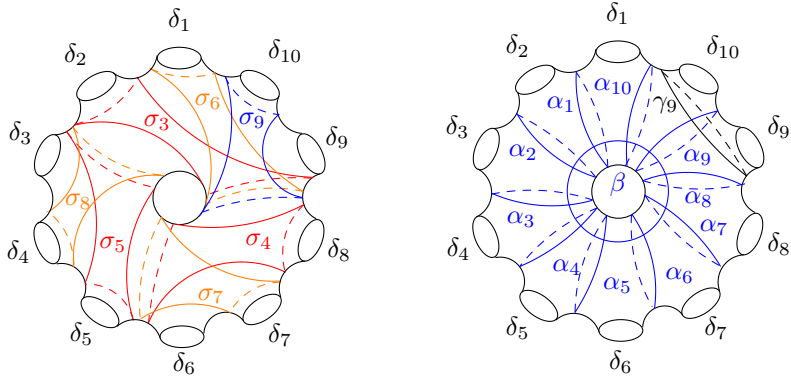


\definecolor{qqqqff}{rgb}{0.0,0.0,1.0}
\definecolor{ffxfqq}{rgb}{1.0,0.4980392156862745,0.0}


\caption{Interior curves for a $10$-holed torus relation}
\end{figure}

\end{section}

%
%

\begin{section}{Signature computation}

In order to compute the signature of the total space of surface bundles, we first review the definition of Meyer's signature cocycle.

\begin{definition}
For any given $A,B\in Sp(2g,\mathbb{R})$, consider the subspace 
$$V_{A,B}:=\{(x,y)\in \mathbb{R}^{2g}\times \mathbb{R}^{2g}|(A^{-1}-I_{2g})x+(B-I_{2g})y=0\}$$ of the real vector space $\mathbb{R}^{2g}\times \mathbb{R}^{2g}$ and the 
bilinear form $<,>_{A,B}:(\mathbb{R}^{2g}\times \mathbb{R}^{2g})\times (\mathbb{R}^{2g}\times \mathbb{R}^{2g})\rightarrow \mathbb{R}$ defined by $<(x_1,y_1),(x_2,y_2)>_{A,B}:=(x_1+y_1)\cdot J(I_{2g}-B)y_2$,
where $\cdot$ is the inner product of $\mathbb{R}^{2g}$ and $J$ is the matrix representing the multiplication by $\sqrt{-1}$ on $\mathbb{R}^{2g}=\mathbb{C}^{g}$. Since the restriction of $<,>_{A,B}$ on $V_{A,B}$ is symmetric, we can define
$\tau_g(A,B):=sign(<,>_{A,B},V_{A,B})$.
\end{definition}

We denote by $\psi:Mod(\Sigma_g)\rightarrow Sp(2g:\mathbb{R})$ the symplectic representation of the mapping class group. 

\begin{theorem}\cite{Mey:1972}
Let $E_{A,B}\rightarrow P$ be an oriented $\Sigma_g$ bundle over a pair of pants $P$ whose monodromy representation $\chi$ composed with the symplectic representation $\psi$ is given by $\psi\circ\chi:\pi_1(P,*)\rightarrow Sp(2g:\mathbb{R})$ sending one generator to $A$ and the other to $B$. Then $\sigma(E_{A,B})=-\tau_g(A,B)$.   
\end{theorem}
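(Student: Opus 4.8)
The plan is to compute $\sigma(E_{A,B})$ straight from its definition, namely as the signature of the intersection form on $H_2(E_{A,B},\partial E_{A,B};\mathbb{R})/\mathrm{Tor}$, by identifying that form with the explicit pairing $\langle\,,\,\rangle_{A,B}$ on $V_{A,B}$. First I would fix the topology. The pair of pants $P$ is homotopy equivalent to a wedge of two circles, so $\pi_1(P)=\langle a,b\rangle$ is free and its three boundary circles carry the loops $a$, $b$, and $(ab)^{-1}$. Correspondingly $\partial E_{A,B}$ is a disjoint union of three $\Sigma_g$-mapping tori with monodromies $A$, $B$, and $(AB)^{-1}$ (the inverse and the order being dictated by the antihomomorphism convention for $\chi$). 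Since $H_*(E_{A,B})$ sees $\psi\circ\chi$ only through its action on $H_*(\Sigma_g;\mathbb{R})$, I may work entirely with the local system $\mathcal{H}_q=H_q(\Sigma_g;\mathbb{R})$ over $P$, on which $\pi_1(P)$ acts trivially for $q=0,2$ and through $A,B\in Sp(2g,\mathbb{R})$ on $\mathcal{H}_1=\mathbb{R}^{2g}$.

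Next I would run the Leray--Serre spectral sequence for $\Sigma_g\to E_{A,B}\to P$ (equivalently, a Wang-type computation from the $1$-dimensional cell structure of $P$). Because $P$ is homotopy equivalent to a $1$-complex, $H_n(P;\mathcal{M})=0$ for all $n\geq 2$ and every coefficient module $\mathcal{M}$, so the spectral sequence is supported in the columns $p=0,1$ and collapses at $E^2$. Hence $H_2(E_{A,B})\cong H_0(P;\mathcal{H}_2)\oplus H_1(P;\mathcal{H}_1)=\mathbb{R}\oplus H_1(P;\mathcal{H}_1)$. Computing the twisted group $H_1(P;\mathcal{H}_1)$ from the two-generator presentation of $\pi_1(P)$ (via Fox derivatives), a $1$-cycle is a pair $(x,y)\in\mathbb{R}^{2g}\times\mathbb{R}^{2g}$, one vector per generating loop, and the closing-up condition around the third boundary forces $(A^{-1}-I_{2g})x+(B-I_{2g})y=0$. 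This is exactly the defining relation of $V_{A,B}$, so $H_1(P;\mathcal{H}_1)\cong V_{A,B}$; this is the heart of why $V_{A,B}$ is the right space.

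I would then compute the intersection form. By Poincar\'e--Lefschetz duality the form on $H_2(E_{A,B},\partial E_{A,B})$ is the geometric intersection pairing, and on the twisted classes above it factors as the symplectic form $J$ on $\mathcal{H}_1$ coupled with the intersection pattern of the dual $1$-cells of $P$. Tracking which swept-out cycles meet, and with what sign, produces precisely the matrix $J(I_{2g}-B)$, so that the pairing of $(x_1,y_1)$ with $(x_2,y_2)$ equals $(x_1+y_1)\cdot J(I_{2g}-B)y_2=\langle(x_1,y_1),(x_2,y_2)\rangle_{A,B}$. The fiber class spanning the $H_0(P;\mathcal{H}_2)=\mathbb{R}$ summand lies in the radical of the form (it has trivial self-intersection and pairs trivially with the twisted classes), and $H_2(P)=0$ kills any further contribution, so it drops out of the signature. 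Therefore $\sigma(E_{A,B})=\pm\,\mathrm{sign}(\langle\,,\,\rangle_{A,B},V_{A,B})=\pm\,\tau_g(A,B)$, and comparing the orientation of $E_{A,B}$ induced by the complex-geometric conventions fixes the sign as $-1$.

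The main obstacle will be the third step: pinning down the exact bilinear form together with its sign. Matching the abstract cup-product pairing with the concrete expression $(x_1+y_1)\cdot J(I_{2g}-B)y_2$ demands careful, orientation-sensitive bookkeeping of how the twisted $1$-cycles over $P$ intersect --- in particular, explaining why the asymmetric factor $(I_{2g}-B)$ appears rather than something symmetric in $A$ and $B$, and why the global sign is $-\tau_g$ rather than $+\tau_g$. A necessary consistency check, which falls out of the same computation, is that $\langle\,,\,\rangle_{A,B}$ is \emph{symmetric} on $V_{A,B}$ (so that its signature is even defined); I would verify this using the defining relation of $V_{A,B}$ together with $J^{T}=-J$ and the symplectic identity $A^{T}JA=J$. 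An alternative to the homological route, useful as a cross-check on the sign, is to glue $E_{A,B}$ to standard pieces and apply Wall's non-additivity theorem, but the twisted-homology computation above is the most direct path to the stated identity.
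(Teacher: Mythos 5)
You should first note that the paper contains no proof of this statement: it is Theorem 4.1, quoted with the citation \cite{Mey:1972}, and the author uses it purely as a black box (together with Novikov additivity and Theorem 4.2) in the signature computations of Section 4. So there is no in-paper argument to compare against; your proposal has to be measured against Meyer's original computation, which it does follow in outline. The scaffolding you set up is sound: the Leray--Serre spectral sequence of $\Sigma_g\to E_{A,B}\to P$ is concentrated in the columns $p=0,1$ and collapses, giving $H_2(E_{A,B};\mathbb{R})\cong H_0(P;\mathcal{H}_2)\oplus H_1(P;\mathcal{H}_1)$; the twisted group $H_1(P;\mathcal{H}_1)$ is the kernel of a map $\mathbb{R}^{2g}\times\mathbb{R}^{2g}\to\mathbb{R}^{2g}$ which, up to the convention-dependent placement of inverses (the paper's monodromy is an antihomomorphism), is exactly the defining equation of $V_{A,B}$; the fiber class does lie in the radical of the intersection form; and the symmetry of $\langle\,,\,\rangle_{A,B}$ on $V_{A,B}$ does follow from $J^{T}=-J$, $A^{T}JA=J$ and the defining relation of $V_{A,B}$.

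The genuine gap is the step you yourself flag as the main obstacle: identifying the intersection pairing on $H_1(P;\mathcal{H}_1)$ with the specific expression $(x_1+y_1)\cdot J(I_{2g}-B)y_2$, and fixing the overall sign. This is not deferrable bookkeeping --- it is the entire content of the theorem. Everything before it only establishes the qualitative statement that $\sigma(E_{A,B})$ equals the signature of \emph{some} symmetric pairing on $V_{A,B}$; which pairing, and with which sign, is precisely what distinguishes Meyer's cocycle $\tau_g$ from any other function of $(A,B)$, and it is what makes the formula usable in the paper's Mathematica computations. Moreover, your proposed resolution of the sign ("the orientation of $E_{A,B}$ induced by the complex-geometric conventions") is not meaningful as stated: nothing in the setup is complex-geometric, and the sign can only be pinned down by carrying an explicit orientation of $P$ and of the fiber through the Poincar\'e--Lefschetz duality isomorphisms, or by evaluating both sides on a single explicit bundle with nonzero signature (your suggested Wall non-additivity cross-check would serve here). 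Until that computation is actually carried out, what you have is a correct and well-organized plan --- essentially Meyer's own route --- but not a proof.
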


We can easily check that $\tau_g$ is a $2$-cocycle on the symplectic group $Sp(2g,\mathbb{R})$ using Novikov's additivity. We call this $\tau_g$ Meyer's signature cocycle.
The pants decomposition of any base surface gives the following signature formula.

\begin{theorem}\cite{Mey:1973}
Let $f:E\rightarrow \Sigma_h^{r}$ be an oriented surface bundle with fiber $\Sigma_g$ and monodromy representation $\chi: \pi_1(\Sigma_h^{r})\rightarrow Mod(\Sigma_g)$. Fix a standard presentation of $\pi_1(\Sigma_h^{r})$ as follows: 
$$\pi_1(\Sigma_h^{r})=<a_1,b_1,\cdots,a_h,b_h,c_1,\cdots,c_r|\prod_{i=1}^{h}[a_i,b_i]\prod_{j=1}^{r}c_j=1>$$
and let $\tau_g$ be Meyer's signature cocycle.
Then the signature of $E$ is given by the formula
$$\sigma(E)=\sum_{i=1}^{h}\tau_g(\kappa_i,\beta_i)-\sum_{i=2}^{h}\tau_g(\kappa_1\cdots\kappa_{i-1},\kappa_i)-\sum_{j=1}^{r-1}\tau_g(\kappa_1\cdots\kappa_h\gamma_1\cdots\gamma_{j-1},\gamma_j)$$
where $\alpha_i=\psi\circ\chi(a_i),\beta_i=\psi\circ\chi(b_i),\gamma_i=\psi\circ\chi(c_i)$ and
$\kappa_i=[\alpha_i,\beta_i]$.
\end{theorem}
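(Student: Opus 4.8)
The plan is to reduce the general formula to the pair-of-pants case established in the previous theorem (Mey:1972) by cutting the base $\Sigma_h^r$ into trinions and invoking Novikov additivity of the signature. The first thing I would record is that $\sigma(E)$ depends only on the composite $\rho=\psi\circ\chi\colon\pi_1(\Sigma_h^r)\to Sp(2g,\mathbb{R})$: the intersection form on $H_2$ of a surface bundle is determined by the action of the monodromy on $H_1$ of the fiber, so the signatures of the building blocks can be read off purely from their symplectic monodromies. In particular it suffices to exhibit one pants decomposition realizing the boundary monodromies appearing in the statement and to add up the contributions of the pieces.

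Next I would fix a standard pants decomposition of $\Sigma_h^r$ adapted to the given presentation. Writing $\kappa_i=[\alpha_i,\beta_i]$, I arrange the $2h+r-2$ trinions in a single ``caterpillar'': $h$ of them carry the handles (each obtained by cutting a one-holed torus $\Sigma_1^1$ along a non-separating curve, so that its two identified cuffs carry $\alpha_i,\beta_i$ and its outer cuff carries $\kappa_i$), $h-1$ are connectors linking consecutive handles, and $r-1$ successively attach the boundary curves $c_1,\dots,c_{r-1}$, the last boundary $c_r$ being forced by the relation. The interior cut curves are chosen so that the monodromy around the cut feeding into the $i$-th handle is the partial product $\kappa_1\cdots\kappa_{i-1}$, and the monodromy around the cut feeding into the $j$-th boundary trinion is $\kappa_1\cdots\kappa_h\gamma_1\cdots\gamma_{j-1}$.

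Now Novikov additivity gives $\sigma(E)=\sum_k\sigma(E|_{P_k})$, and the pair-of-pants theorem turns each summand into $-\tau_g$ evaluated on the two incoming cuff monodromies of $P_k$. Here the orientation bookkeeping must be done carefully: a cuff shared by two adjacent trinions carries opposite monodromies from the two sides, so I would use the normalization $\tau_g(I,\cdot)=0$ together with the inversion and symplectic-conjugation symmetries of $\tau_g$ to reconcile the two contributions. Collecting the trinion contributions and repeatedly applying the $2$-cocycle identity $\tau_g(A,B)+\tau_g(AB,C)=\tau_g(A,BC)+\tau_g(B,C)$ telescopes the partial products $\kappa_1\cdots\kappa_{i-1}$ and $\kappa_1\cdots\kappa_h\gamma_1\cdots\gamma_{j-1}$ into exactly the connector sum $-\sum_{i=2}^{h}\tau_g(\kappa_1\cdots\kappa_{i-1},\kappa_i)$ and the boundary sum $-\sum_{j=1}^{r-1}\tau_g(\kappa_1\cdots\kappa_h\gamma_1\cdots\gamma_{j-1},\gamma_j)$, while each handle trinion contributes a single term $\tau_g(\kappa_i,\beta_i)$ after using conjugation invariance of $\tau_g$ to rewrite its two cuff classes.

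The main obstacle I anticipate is precisely this last point: verifying that the one-holed-torus block contributes $+\tau_g(\kappa_i,\beta_i)$ rather than some other combination of $\alpha_i$ and $\beta_i$, and that the signs produced by Novikov additivity and by re-orienting the shared cuffs combine correctly (in particular the sign reversal from $-\tau_g$ on a trinion to $+\tau_g(\kappa_i,\beta_i)$ on the reassembled handle). I would pin down the conventions by checking the base case $\Sigma_0^3$, where the formula collapses to $-\tau_g(\gamma_1,\gamma_2)$ using $\tau_g(I,\gamma_1)=0$ and so agrees with the pair-of-pants theorem; an induction on the number of trinions, adding one handle or one boundary cuff at a time and invoking the cocycle identity at each step, then upgrades this to the full formula.
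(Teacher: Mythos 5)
The paper does not actually prove this theorem---it is quoted from Meyer's paper with only the preceding remark that ``the pants decomposition of any base surface gives the following signature formula''---and your proposal is precisely that argument: cut the base into trinions, evaluate each piece by the pair-of-pants theorem (Theorem 4.1), sum via Novikov additivity, and reduce to the stated expression using the cocycle identity, conjugation invariance, and the normalizations $\tau_g(I,\cdot)=\tau_g(A,A^{-1})=0$. Your outline, including the flagged sign/orientation issues for the handle blocks, is correct and coincides with the (cited) approach of Meyer that the paper relies on.
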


By applying this formula, we can compute the signatures of surface bundles obtained by taking out some neighborhoods of singular fibers from the Lefschetz fibrations constructed in Section $3$. We used Mathematica for computing each term in the above formula.

Meyer also provided another interpretation of the above signature formula. For this, we start with the following diagram. 

$$
\begin{array}{ccccccccc}
1 & \rightarrow & \widetilde{R} & \rightarrow & \widetilde{F} & \xrightarrow{\widetilde{\pi}} & \pi_1(\Sigma_h) & \rightarrow & 1 \\
  &  &  \downarrow &  & \downarrow &  & \downarrow \chi &  &  \\
1 & \rightarrow & R & \rightarrow & F & \xrightarrow{\pi} & Mod(\Sigma_g) & \rightarrow & 1 \\
\end{array}
$$
Here, $\pi_1(\Sigma_h)=<a_1,\cdots,a_h,b_1,\cdots,b_h|\prod_{i=1}^{h}[a_i,b_i]=1>$, $\widetilde{F}=<\widetilde{a_1},\cdots,\widetilde{a_h},$
$\widetilde{b_1},\cdots,\widetilde{b_h}>$, $\widetilde{R}$ is the normal closure of $\widetilde{r}=\prod_{i=1}^{h}[\widetilde{a_i},\widetilde{b_i}]$, and $\widetilde{\pi}:\widetilde{a_i}\mapsto a_i, \widetilde{b_i}\mapsto b_i$.
The second row corresponds to the finite presentation of $Mod(\Sigma_g)$ due to Wajnryb. $F=F(S)$, where $S=\{y_1,y_2,u_1,\cdots,u_g,z_1,\cdots,z_{g-1}\}$ and $R$ is the normal closure of $A^{k}_{i,j}$'s, $B^{k}_{i}$'s, $C^1, D^1, E^1$ (cf.\cite{Endo:1998} \S$3$). If we have a monodromy representation $\chi:\pi_1(\Sigma_h)\rightarrow Mod(\Sigma_g)$, then
there exists a homomorphism $\widetilde{\chi}:\widetilde{F}\rightarrow F$ such that $\chi\circ\widetilde{\pi}=\pi\circ\widetilde{\chi}$ since $\pi$ is surjective and $\widetilde{F}$ is free. Hence we have $\widetilde{\chi}(\widetilde{r})\in R\cap[F,F]$.
Now define the $1$-cochain $c:F\rightarrow \mathbb{Z}$ cobounding the $2$-cocycle $-\pi^*\psi^*(\tau_g)$ as follows. 
$$
c(x):=\sum_{j=1}^{m}\tau_g(\psi(\pi(\widetilde{x_{j-1}})),\psi(\pi(x_j)))
$$
$$
(x=\prod_{j=1}^{m}x_j,\quad x_j\in S\cup S^{-1}, \quad\widetilde{x_j}=\prod_{i=1}^{j}x_i)
$$
Since $\pi^*\psi^*(\tau_g)\mid_{R\times R}=0$, the restriction $c\mid_{R}:R\rightarrow \mathbb{Z}$ is a homomorphism.
The values of $c$ for the relations of Wajnryb's presentation were calculated in \cite{Endo:1998}.
\begin{theorem}\cite{Mey:1973}

Let $p:E\rightarrow \Sigma_h$ be a $\Sigma_g$- bundle over $\Sigma_h$ 
and $\chi:\pi_1(\Sigma_h)\rightarrow Mod(\Sigma_g)$ be its monodromy homomorphism. Then the signature of the total space $E$ is given as follows :
$$
\sigma(E) = -c\mid_R(\widetilde{\chi}(\widetilde{r}))      \quad( =  <\psi^*[\tau_g], \widetilde{\chi}(\widetilde{r})[R,F]> )
$$
where $<,>$ is a pairing on the second cohomology and homology of $Mod(\Sigma_g)$.
\end{theorem}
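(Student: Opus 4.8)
The plan is to derive this reformulation directly from the pants-decomposition signature formula of the previous theorem, specialized to a closed base, and then to recognize the resulting sum of Meyer-cocycle values as the evaluation of the telescoping cochain $c$ on a word representing $\widetilde{\chi}(\widetilde{r})$. The conceptual input is that, because $F$ is free, the pulled-back $2$-cocycle $-\pi^*\psi^*(\tau_g)$ is a coboundary on $F$, and the explicit formula defining $c$ exhibits $c$ as a primitive: unwinding its definition via the $2$-cocycle identity for $\tau_g$ yields $\delta c=-\pi^*\psi^*(\tau_g)$, equivalently
$$c(xy)=c(x)+c(y)+\tau_g\bigl(\psi\pi(x),\psi\pi(y)\bigr)\qquad(x,y\in F).$$

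First I would set $r=0$ in the pants-decomposition formula, so that
$$\sigma(E)=\sum_{i=1}^{h}\tau_g(\kappa_i,\beta_i)-\sum_{i=2}^{h}\tau_g(\kappa_1\cdots\kappa_{i-1},\kappa_i),$$
and write $A_i:=\widetilde{\chi}(\widetilde{a_i})$, $B_i:=\widetilde{\chi}(\widetilde{b_i})$, so that $\widetilde{\chi}(\widetilde{r})=\prod_{i=1}^{h}[A_i,B_i]$ and $\psi\pi([A_i,B_i])=\kappa_i$. Applying the additivity formula above repeatedly to the product $\prod_i[A_i,B_i]$ splits $c(\widetilde{\chi}(\widetilde{r}))$ as $\sum_{i=1}^{h}c([A_i,B_i])$ plus the telescoping correction $\sum_{i=2}^{h}\tau_g(\kappa_1\cdots\kappa_{i-1},\kappa_i)$, which already matches the second sum of the signature formula. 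The remaining task is the single-commutator identity $c([A_i,B_i])=-\tau_g(\kappa_i,\beta_i)$: expanding $c(A_iB_iA_i^{-1}B_i^{-1})$ with the additivity formula, using $c(X^{-1})=-c(X)-\tau_g(\psi\pi(X),\psi\pi(X)^{-1})$ to eliminate the $c(A_i^{\pm1})$ and $c(B_i^{\pm1})$ terms, leaves a fixed combination of $\tau_g$-values in $\alpha_i,\beta_i$ which the $2$-cocycle identity collapses to $-\tau_g(\kappa_i,\beta_i)$. Summing, $c(\widetilde{\chi}(\widetilde{r}))=-\sigma(E)$, which is the first equality.

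For the parenthetical cohomological form, I would invoke Hopf's formula $H_2(Mod(\Sigma_g);\mathbb{Z})\cong (R\cap[F,F])/[R,F]$ attached to the presentation $1\to R\to F\to Mod(\Sigma_g)\to1$. Since $c\mid_R$ is a homomorphism (because $\pi^*\psi^*(\tau_g)\mid_{R\times R}=0$), it descends to a homomorphism on $(R\cap[F,F])/[R,F]$, hence represents $\psi^*[\tau_g]\in H^2(Mod(\Sigma_g);\mathbb{Z})$ under the universal-coefficient/Hopf identification. On the other side, the corresponding presentation of $\pi_1(\Sigma_h)$ has $\widetilde{r}[\widetilde{R},\widetilde{F}]$ representing the fundamental class $[\Sigma_h]\in H_2(\pi_1(\Sigma_h))$, so $\widetilde{\chi}(\widetilde{r})[R,F]$ represents $\chi_*[\Sigma_h]\in H_2(Mod(\Sigma_g))$. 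Pairing then reproduces $c\mid_R(\widetilde{\chi}(\widetilde{r}))$.

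The main obstacle is the single-commutator identity $c([A,B])=-\tau_g(\kappa,\beta)$: since $\tau_g$ is only a cocycle and not a homomorphism, the additivity formula generates several correction terms (the $\tau_g(\alpha\beta,\alpha^{-1})$, $\tau_g(\alpha\beta\alpha^{-1},\beta^{-1})$, and diagonal $\tau_g(\alpha,\alpha^{-1})$ contributions), and collapsing them to the single clean value requires a careful, sign-sensitive application of the $2$-cocycle relation together with the elementary vanishing properties of $\tau_g$ on inverse pairs. Organizing this bookkeeping so that the surviving terms reproduce exactly the pants-decomposition sum, with correct indices and signs, is the delicate step; the well-definedness of the whole expression (independence of the chosen word for $\widetilde{\chi}(\widetilde{r})$) is guaranteed once $c\mid_R$ is known to be a homomorphism.
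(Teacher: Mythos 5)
The paper itself gives no proof of this statement: it is quoted directly from Meyer \cite{Mey:1973}, and the surrounding text only sets up the cochain $c$ and records that $c\mid_R$ is a homomorphism. So there is nothing in the paper to compare your route to; judged on its own, your plan of deriving the theorem from the pants-decomposition formula is the natural one, and most of it is sound: the additivity relation $c(xy)=c(x)+c(y)+\tau_g(\psi\pi(x),\psi\pi(y))$ does follow from the cocycle identity together with $\tau_g(1,A)=\tau_g(A,1)=\tau_g(A,A^{-1})=0$, the telescoping over $\prod_i[A_i,B_i]$ reproduces the second sum of Theorem $4.2$ with the right sign, and the Hopf-formula identification for the parenthetical equality is standard.

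The gap is in the single step you yourself flag as delicate, and it is not just bookkeeping: the identity $c([A,B])=-\tau_g(\kappa,\beta)$ cannot be proved from "the $2$-cocycle relation together with the elementary vanishing properties of $\tau_g$ on inverse pairs". Expanding as you propose gives $c([A,B])=\tau_g(\alpha,\beta)+\tau_g(\alpha\beta,\alpha^{-1})+\tau_g(\alpha\beta\alpha^{-1},\beta^{-1})$, and since $\tau_g(\kappa,\beta)=-\tau_g(\kappa\beta,\beta^{-1})=-\tau_g(\alpha\beta\alpha^{-1},\beta^{-1})$ is formal, your identity is equivalent to $\tau_g(\alpha,\beta)+\tau_g(\alpha\beta,\alpha^{-1})=0$, i.e.\ to $\tau_g(\alpha,\beta)=\tau_g(\alpha\beta\alpha^{-1},\alpha)$. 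This is \emph{not} a formal consequence of the properties you invoke: take $\tau=\delta f$ for any function $f$ with $f(1)=0$ and $f(x^{-1})=-f(x)$ that is not constant on conjugacy classes; such a $\tau$ satisfies the cocycle identity, normalization, and vanishing on inverse pairs, yet $\tau(\alpha,\beta)+\tau(\alpha\beta,\alpha^{-1})=f(\beta)-f(\alpha\beta\alpha^{-1})\neq0$ in general. What is missing is the conjugation invariance (equivalently, the symmetry) of Meyer's cocycle, $\tau_g(CAC^{-1},CBC^{-1})=\tau_g(A,B)$, which gives $\tau_g(\alpha\beta\alpha^{-1},\alpha)=\tau_g(\beta,\alpha)=\tau_g(\alpha,\beta)$. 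This is a genuine additional property of $\tau_g$ — it follows, for instance, from Theorem $4.1$, because the pants bundle $E_{A,B}$ is unchanged when the identification of the base fiber is altered and when two boundary components are exchanged by an orientation-preserving diffeomorphism — but it must be invoked explicitly. Once you add it, your computation closes up and the rest of the argument, including the telescoping and the Hopf-formula interpretation, is correct.
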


Now, we are ready to prove our main theorem. 
%
%

\begin{proof}[Proof of Theorem \ref{thm:main}]

(a)  We apply the subtraction operation to the Lefschetz fibrations $X\rightarrow \Sigma_3$, $Y_1\rightarrow \Sigma_2$, and $Y_2\rightarrow \Sigma_3$ constructed in Propositions $3.5$ and Proposition $3.1$. Let $N_1\subset X$ be the neighborhood of four singular fibers with coinciding vanishing cycles and $N_2\subset X$ be the neighborhood of two singular fibers with coinciding vanishing cycles. Then the complement $X\setminus N_1\setminus N_2$ is the $\Sigma_f$ bundle over $\Sigma_3^{2}$, and its signature can be computed by applying Theorem $4.2$ to this bundle. More precisely to its monodromy representation $\chi:\pi_1(\Sigma_3^2)\rightarrow Mod(\Sigma_f)$ given by $\chi(a_1)=(t_{\delta_1}^{-1}\cdot t_{t_{\alpha_1}^{-1}(\sigma_1)}\cdot t_{\delta_2}^{-1})^{t_{\alpha_1}^{-2}}$, $\chi(b_1)=(\phi_1)^{t_{\alpha_1}^{-2}}$, $\chi(a_2)=(t_{\delta_1}^{-1}\cdot t_{t_{\alpha_1}^{-1}(\sigma_1)}\cdot t_{\delta_2}^{-1})^{t_{\alpha_1}^{-4}}$, $\chi(b_2)=(\phi_2)^{t_{\alpha_1}^{-4}}$, $\chi(a_3)=(t_{\delta_2}^{-1}\cdot t_{t_{\alpha_2}^{-1}(\sigma_2)}\cdot t_{\delta_3}^{-1})^{t_{\alpha_1}^{-4} t_{\alpha_2}^{-2}}$, $\chi(b_3)=(\phi_3)^{t_{\alpha_1}^{-4} t_{\alpha_2}^{-2}}$, $\chi(c_1)=t_{\alpha_1}^4$, and $\chi(c_2)=t_{\alpha_2}^2$. Now, by computations using Mathematica we have $\tau(\kappa_1,\beta_1)=\tau(\kappa_2,\beta_2)=\tau(\kappa_3,\beta_3)=2$, $-\tau(\kappa_1,\kappa_2)=-\tau(\kappa_1\kappa_2,\kappa_3)=-2$, and $-\tau(\kappa_1\kappa_2\kappa_3,\gamma_1)=0$. Hence, $\sigma(X\setminus N_1\setminus N_2)=3\cdot2+2\cdot(-2)+0=2$. By taking out the neighborhood $M_i$ of all singular fibers from $Y_i$ (for $i=1,2$), we get $Y_i\setminus M_i$, the $\Sigma_f$ bundles over ${\Sigma}^{1}_2$, both with signature $-1$. For signature computation, we can directly apply Theorem $4.2$ to these two bundles as above. Alternatively, we can first compute the signature of Lefschetz fibrations : $\sigma(Y_1)=-2$ and $\sigma(Y_2)=-4$ (cf. Proposition $15$ and Proposition $16$ in \cite{EKKOS:2002}). In order to compute the signature of taken out parts, apply Theorem $4.1$ several times and use the fact that $\sigma(N($a nonseparating singular fiber$))=0$ (cf.\cite{Oz:2002}). From these, we have $\sigma(Y_1\setminus M_1)=(-2)-(-1)=-1$ and $\sigma(Y_2\setminus M_2)=(-4)-(-3)=-1$. Therefore, $X-Y_1-Y_2$ is the $\Sigma_{f\geq3}$ bundle over $\Sigma_8$, and $\sigma(X-Y_1-Y_2)=\sigma(X\setminus N_1\setminus N_2)+\sigma(\overline{Y_1\setminus M_1})+\sigma(\overline{Y_2\setminus M_2})=2+1+1=4$ by Novikov additivity. Moreover, if we pullback this bundle (or, with opposite orientation) to unramified coverings of $\Sigma_8$ of degree $|n|$, then we get $b(f\geq3,n)\leq7|n|+1$.

(b) Apply the subtraction operation to the Lefschetz fibrations $Z\rightarrow \Sigma_4$ and $Y_3\rightarrow \Sigma_3$, constructed in Proposition $3.6$ and Proposition $3.3$, respectively. Then, $Z-Y_3$ is the required $\Sigma_{f\geq5}$ bundle over $\Sigma_7$. Let $N$ be the neighborhood of all singular fibers in $Z$ and let $M$ be the neighborhood of all singular fibers in $Y_3$. By applying Theorem $4.2$ to two surface bundles $Z\setminus N$ and $Y_3\setminus M$, we get $\sigma(Z-Y_3)=\sigma(Z\setminus N)+\sigma(\overline{Y_3\setminus M})=2+2=4$. Let me give you another proof for verifying $\sigma(Z-Y_3)=4$ using Theorem $4.3$. 
From Proposition $3.6$ and Proposition $3.3$, we have
$\widetilde{\chi}(\widetilde{r})\equiv(E_2\cdot L_5^{w_1}\cdot L_6^{w_2}\cdot L_5^{w_3}\cdot L_6^{t_\beta})(L_1\cdot L_2^{t_y t_x t_z}\cdot L_3^{t_z}\cdot L_4)^{g}$ modulo commutativity and braid relations, where $g$ is a self-homeomorphism of $\Sigma_{f\geq5}$ such that $g(\alpha_3)=b$ and $g(\alpha_2)=c$. Moreover, from \cite{KO:2008}, $E_2\equiv L_{10}\cdot (L_{9}\cdot ((C^1)^{-1})^{z_0})^{z_1}$ for some mapping classes $z_0, z_1$, modulo commutativity and braid relations. Observe that for each $L_i$, four boundary curves are nonseparating and $\Sigma_f\setminus supp(L_i)$ is connected. Since the same holds for the relation $(D^1)^{-1}$, there exists a self-homeomorphism $f_i$ of $\Sigma_f$ sending the supp($(D^1)^{-1}$) to the supp($L_i$) for each $i$. Therefore, 
$\widetilde{\chi}(\widetilde{r})\equiv((D^1)^{-1})^{f_{10}}
((D^1)^{-1})^{f_9\circ z_1}\cdot ((C^1)^{-1})^{z_0\circ z_1}\cdot ((D^1)^{-1})^{f_5\circ w_1}\cdot ((D^1)^{-1})^{f_6\circ w_2}\cdot ((D^1)^{-1})^{f_5\circ w_3}\cdot ((D^1)^{-1})^{f_6\circ t_\beta}\cdot ((D^1)^{-1})^{f_1\circ g}\cdot ((D^1)^{-1})^{f_2\circ (t_y t_x t_z)\circ g}\cdot ((D^1)^{-1})^{f_3\circ t_z\circ g}\cdot ((D^1)^{-1})^{f_4\circ g}$ modulo commutativity and braid relations and hence $\sigma(Z-Y_3) = -c(\widetilde{\chi}(\widetilde{r}))=c(C^1)+10\cdot c(D^1)=(-6)+10=4$.
For the upper bound for the genus function $b(f\geq5,n)$, use the same argument as before. 

(c) Apply the subtraction operation to the Lefschetz fibrations $W\rightarrow \Sigma_3$ and $Y_4\rightarrow \Sigma_3$, constructed in Proposition $3.7$ and Proposition $3.4$, respectively. Then $W-Y_4$ is the required $\Sigma_{f\geq6}$ bundle over $\Sigma_6$ with signature $4$.
From Proposition $3.4$ and Proposition $3.7$, $\widetilde{\chi}(\widetilde{r})\equiv E_8\cdot (L_1\cdot L_2)^{h}$ modulo braid and commutativity relations, where $h$ is a self-homeomorphism of $\Sigma_f$ such that $h\{\beta_8,t_{\alpha_9}^{-1}(\sigma_9)\}=\{\beta,\gamma\}$. Moreover, $E_8\equiv (\prod_{j=1}^{8} ((D^1)^{-1})^{z_{j}})\cdot ((C^1)^{-1})^{z_0}$ for some $z_0,\ldots, z_8$ (cf.\cite{KO:2008} and Proposition $3.7$).
Therefore, $\sigma(W-Y_4)=-c(\widetilde{\chi}(\widetilde{r}))=c(C^1)+10\cdot c(D^1)=(-6)+10=4$.
For the upper bound for the genus function $b(f\geq6,n)$, use the same argument as before.

\end{proof}

%
%

\begin{proof}[Proof of Theorem \ref{thm:cor}]
Every odd genus surface is a covering of genus three surface. By Morita\cite{Morita:1987}, after replacing a given surface bundle by a pullback to some covering of the base, the resulting surface bundle admits a fiberwise covering of any given degree. After applying this to the genus $3$ surface bundle over $\Sigma_{b_3(1)}$ with signature $4$ and the degree of the covering $\Sigma_f\rightarrow \Sigma_3$, we obtain $b_f(\frac{f-1}{2}n)\leq n(b_3(1)-1)+1$. Hence, $G_f:=\lim_{n\to\infty}\frac{b_f(n)}{n}\leq \lim_{n\to\infty}\frac{2n(b_3(1)-1)+2}{(f-1)n}\leq \lim_{n\to\infty}\frac{14n+2}{(f-1)n}=\frac{14}{f-1}$.  
\end{proof}

%
%

\begin{remark}
In  \cite{Harer:1983, KorkSt:2003, Sa:2012}, it was proven that $H_2(Mod(\Sigma_g):\mathbb{Z})\cong\mathbb{Z}$ for every $g\geq 4$ and $H_2(Mod(\Sigma_g):\mathbb{Z})\cong\mathbb{Z}\oplus \mathbb{Z}_2$ for $g=3$. Meyer\cite{Mey:1973} proved that each generator of $H_2(Mod(\Sigma_g))/Tor$ gives us signature $4$ relying on the Theorem $4.3$. In order to prove this, Meyer used Birman-Hilden's presentation of $Mod(\Sigma_g)$, and Endo\cite{Endo:1998} reproved this using a simpler presentation due to Wajnryb\cite{Waj:1999}. By taking $\widetilde{\chi}(\widetilde{r})$ as different representatives for a generator of $H_2(Mod(\Sigma_g))/Tor$,
we can construct various surface bundles with a fixed signature $4$ as we have seen in the proof of Theorem $1.2$. Therefore, the problem to determine $b(f,n)$ is to find the most effective representative $\tilde{\chi}(\tilde{r})$, in the sense of commutator length, for $n$ times generator of $H_2(Mod(\Sigma_f))/Tor$.
\end{remark}
\end{section}

\subsection*{Acknowledgements}
The author would like to thank her advisor, Jongil Park, for suggesting this problem, providing various opportunities to learn and discuss math, and his advice and encouragement. The author would also like to thank Ki-Heon Yun for his seminar talks which provided helpful background knowledge on this problem. The author owes special gratitude to Hisaaki Endo for his intensive lectures at KIAS, which provided crucial ideas of signature computation, and Mustafa Korkmaz for 
kindly answering the question about his computation used in \cite{BDS:2001}. Part of this work was carried out during a visit to the Max Planck Institute in Bonn in $2013$. The author thanks the institute for its hospitality and the organizers of the special semester of $4$-manifold for the invitation. She also would like to thank the referee for the careful review and valuable comments. This work was supported by the National Research Foundation of Korea Grant(2010-0019516).

\end{document}